\crefname{hypothesis}{Hypothesis}{Hypotheses}
\newtheorem{assumption}[theorem]{Assumption}
\newtheorem{example}{Example}
\title{An optimization algorithm for nonsmooth nonconvex problems with upper-$C^2$ objective
        \thanks{Submitted to the editors.
\funding{Prepared by Lawrence Livermore National Laboratory (LLNL) under Contract DE-AC52-07NA27344.}}}
\author{Jingyi Wang\thanks{Center for Applied Scientific Computing, Lawrence Livermore National Laboratory,
Livermore, CA 
  (\email{wang125@llnl.gov, petra1@llnl.gov}).}
\and Cosmin G. Petra\footnotemark[2]
}
\begin{document}
\nolinenumbers
\maketitle

\begin{abstract}
An optimization algorithm for nonsmooth nonconvex constrained optimization problems with upper-$C^2$ objective functions is proposed and analyzed. Upper-$C^2$ is a weakly concave property that exists in difference of convex (DC) functions and arises naturally in many applications, particularly certain classes of solutions to parametric optimization problems~\cite{rockafellar1998,bonnans_book}, \textit{e.g.}, recourse of stochastic programming~\cite{Shapiro_book} and projection into closed sets~\cite{rockafellar1998}. The algorithm can be viewed as a bundle method specialized for upper-$C^2$ problems and is globally convergent with bounded algorithm parameters. Compared to conventional bundle methods, the proposed method is both simpler and computationally more efficient. The algorithm handles general smooth constraints similarly to sequential quadratic programming (SQP) methods and uses a line search to ensure progress. The potential inconsistencies from the linearization of the constraints are addressed through a penalty method. The capabilities of the algorithm 
are demonstrated by solving both simple upper-$C^2$ problems and real-world optimal power flow problems used in current power grid industry practices.

\end{abstract}

\begin{keywords}
  optimization, nonsmooth, nonconvex, bundle methods, upper-$C^2$
\end{keywords}

\begin{AMS}
49M37, 65K05, 90C26, 90C30, 90C55
\end{AMS}


\newcommand{\Rbb}{\ensuremath{\mathbb{R} }}
\section{Introduction}\label{se:intro}

In this paper, we consider the class of nonsmooth nonconvex  optimization problems in the form of 
\begin{equation} \label{eqn:opt0}
 \centering
  \begin{aligned}
	  &\underset{\substack{x}}{\text{minimize}} 
	  & & f(x)+ r(x)\\
   &\text{subject to}
	  & & c(x) = 0, \ \ \ d(x) \geq 0,\\
  \end{aligned}
\end{equation}
where the objective function $f:\Rbb^n\to\Rbb$, and constraints $c:\Rbb^n\to\Rbb^{m_c}$, $d:\Rbb^n\to\Rbb^{m_d}$ are continuously differentiable. 
The function $r:\Rbb^n\to\Rbb$ is Lipschitz continuous and  
upper-$C^2$ (see monograph~\cite{rockafellar1998} and Section~\ref{sec:prob}),
a property that is rarely exploited individually by numerical algorithms but, nevertheless, exists in many optimization problems 
as we elaborate later in this section. 
Nonsmooth optimization has been researched extensively for decades. 
Most successful methods include subgradient methods~\cite{shor1985}, bundle methods, and DC algorithms~\cite{an2018}. 
Bundle methods are widely regarded as
one of the most efficient algorithms to address discontinuous first-order derivatives~\cite{makela1992,Kiwiel1996}. 
The method develops an approximation model for the objective with the information from previous iterations, 
referred to as a bundle, and solves optimization subproblems with it~\cite{mifflin1982,kiwiel1985}. 
Many bundle algorithms rely on the quadratic coefficient in the approximation model to avoid line searches~\cite{hare2010,noll2013}. 
The solution to the subproblem is called a trial point/step, which through a rejection criterion is either taken or rejected, but included in the bundle to improve the trial point for the next iteration.
A trial point is called a serious point if it is accepted.
The approximation typically consists of piecewise-linear tangent planes at the bundle points 
and can be adjusted to maintain positive linearization error that is lacking due to the nonconvexity of the objective.
A commonly used adjustment, the so-called down-shift mechanism, is introduced in~\cite{mifflin1982} and investigated in~\cite{lemarechal1996,lemarechal1978,schramm1992,zowe1989}. Convergence analysis using this mechanism can be found in~\cite{apkarian2008,noll2009}. 
Given better local convexity properties, \textit{e.g.}, lower-$C^2$, the slope of the tangent planes can be tilted to generate locally convexified models~\cite{hare2010}. The so-called redistributed bundle methods are shown to work in practice under less ideal conditions~\cite{hare2015}.
Constrained nonsmooth nonconvex optimization adds another layer of complexity for bundle methods. Constraints that form a convex feasible set, \textit{e.g.}, affine constraints, can be maintained as-is in the subproblems and convergence analysis would stand valid~\cite{hare2015,dao2016}.
For a nonconvex $c(\cdot)$, smooth or not, bundle methods typically incorporate it into the objective through penalty or filter methods so that the approximation is constructed for the new objective. 


Outside of bundle methods,~\cite{curtis2012} proposed a sequential quadratic programming (SQP) method using gradient sampling for inequality-constrained optimization. 
The global convergence result of the algorithm shows that accumulation points are stationary points of the exact penalty function, which can be reduced to the constraint violation depending on the penalty parameter. 
A more efficient BFGS-SQP is proposed in~\cite{curtis2017} which shows promising convergence behaviors without requiring the existence of Hessian.
In \cite{xu2015}, a smoothing function of the objective that satisfies gradient consistency property and augmented Lagrangian constraint relaxation are used to solve the problem. 
Alternating direction method of
multipliers (ADMM) has also been applied to nonsmooth nonconvex problems~\cite{hong2018}. In~\cite{wang2019} the convergence analysis requires the objective to be locally prox-regular with affine constraints. 
DC functions include both lower-$C^2$ and upper-$C^2$ functions, though the latter is seldom studied separately. For an overview of DC functions and the celebrated DC algorithms (DCA), we refer to~\cite{an2018,cui2021}. In~\cite{liu2020}, the recourse function of linearly bi-parameterized two-stage problems with quadratic recourse is shown to have convexity-concavity property. The authors proposed an iterative algorithm with a quadratic convex subproblem that converges subsequently to generalized critical points. 

Upper-$C^2$ functions exist in many optimization applications and can be viewed as DC functions. 
Any finite, concave function is upper-$C^2$~\cite[Theorem~10.33]{rockafellar1998}, as are continuously differentiable functions~\cite[Proposition~13.34]{rockafellar1998}.
Moreover, a squared distance function to a closed set, which is  
the solution function of a minimization problem, is upper-$C^2$~\cite[Example~10.57]{rockafellar1998}.
The property extends to a large number of two-stage (stochastic) programming problems with recourse~\cite{Shapiro_book,Birge97Book,KallWallace}, 
whose objective includes an upper-$C^2$ recourse function of the second-stage problem.
The application that motivated us to look into such concavity-related property is 
the distributed security-constrained alternating current optimal power flow (SCACOPF) problems~\cite{ChiangPetraZavala_14_PIPSNLP,Qiu2005,petra_14_augIncomplete,petra_14_realtime,petra_21_gollnlp}, which can be formed as a two-stage recourse problem. 
In this case, the nonsmooth part of the first-stage objective $r(\cdot)$ becomes upper-$C^2$ through regularization of the second-stage problems, if it is not already so~\cite{wang2022}. 
We point out that upper-$C^2$ does not guarantee differentiability, lower-$C^1$ or lower regularity (see Section~\ref{sec:prob}).

The existing algorithms unfortunately do not fully address the challenges in solving~\eqref{eqn:opt0}.
Without the explicit form of $r(\cdot)$, potential algorithm needs to rely on information at known points to 
construct its approximation. Further, the cost of evaluating $r(\cdot)$ and its subgradient could be substantial, 
which incentivizes the algorithm to avoid line search on the objective. Bundle methods are good algorithm candidates.
However, in order to achieve convergence, existing bundle methods either rely on additional properties of $r(\cdot)$, \textit{e.g.}, lower-$C^2$~\cite{hare2010,hare2015,yang2014} or allow the approximation model coefficient to be unbounded~\cite{noll2009,noll2013,dao2015}. Neither is ideal for $r(\cdot)$ that do not enjoy such properties. 
The complex selection criteria of the bundle points and the cost of incorporating them in the 
subproblem could also be a serious drawback. DCA, which in essence uses a linear function to approximate the concave function, bear some similarities to the proposed algorithm. The existence of both equality and inequality constraints makes the exact penalty method used in some DCAs to address constraints challenging. Given the absence of the convex part of the DC function, a flexible and robust quadratic term would help guarantee convergence and control the step size.
To provide theoretically sound and computationally efficient algorithms for~\eqref{eqn:opt0}, 
we propose a simplified bundle method that capitalizes on the upper-$C^2$ objective and the continuously differentiable, nonconvex constraints. 
The proposed algorithms has been parallelized efficiently for SCACOPF problems~\cite{wang2021}.

The paper is organized as follows. In Section~\ref{sec:prob}, we describe the upper-type properties of $r(\cdot)$ and mathematical notations.
In Section~\ref{sec:alg}, our algorithm is proposed and its global convergence analysis  
is provided under assumptions drawn from Section~\ref{sec:prob}.
We discuss how the parameters of the algorithm are updated for the approximation models used in subproblems.
An algorithm to address possible inconsistency arising from the linearized constraints in the subproblems is then presented,
together with its convergence analysis.
Numerical experiments are shown in Section~\ref{sec:exp} that illustrate both the 
theoretical and practical capabilities of the proposed algorithm.

\newcommand{\norm}[1]{\left\lVert {#1} \right\rVert}
\section{Background and notations}\label{sec:prob}

In this section, we provide the concepts and notations necessary for the analysis in this paper. 
The lower regular subdifferential of a function $r:\Rbb^n\to\Rbb$ at point $\bar{x}$, 
denoted as $\hat{\partial} r(\bar{x})$, is defined by  
\begin{equation}\label{eqn:subgradient-def}
\centering
 \begin{aligned}
	\hat{\partial} r (\bar{x}) := \left\{ g\in\Rbb^n |\liminf_{ \substack{x\to \bar{x}\\x\neq \bar{x} }}\frac{r(x)-r(\bar{x})-\langle g,x-\bar{x}\rangle  }{\norm{x-\bar{x}}}\geq 0\right\},
 \end{aligned}
\end{equation}
where $\norm{\cdot}$ is the 2-norm and $\langle \cdot \rangle$ is the inner product in $\Rbb^n$.
If there exists a sequence $\{x^{\nu}\}$ such that $x^{\nu} \xrightarrow[r]{} \bar{x}$ and $g^{\nu}\in \hat{\partial}r(x^{\nu})$ with $g^{\nu}\to \bar{g}$, then 
$\bar{g}$ is a lower general subgradient of $r(\bar{x})$, where the f-attentive convergence 
$ x^{\nu} \xrightarrow[r]{} \bar{x}$ is trivial for $x^{\nu}\to \bar{x}$ when $r$ is Lipschitz. 
The lower general subdifferential at $\bar{x}$ is denoted as $\partial r(\bar{x})$.

A Lipschitz function $r$ is lower regular (or subdifferentially regular) 
if and only if $\partial r(\bar{x})=\hat{\partial} r(\bar{x})$~\cite[Corollary~8.11]{rockafellar1998}. 
Lower general subgradient is often simply called general subgradient, 
and a lower regular function is called regular. 
On the other hand, 
upper regular subdifferential~\cite{rockafellar1998,mordukhovich2004upp} is defined as
\begin{equation}\label{def:upp-subgradient}
\centering
 \begin{aligned}
	 \hat{\partial}^+ r(\bar{x}) :=& - \hat{\partial} (-r)(\bar{x})
	 =\left\{ g\in\Rbb^n |\limsup_{ \substack{x\to \bar{x}\\x\neq \bar{x} }}\frac{r(x)-r(\bar{x})-\langle g,x-\bar{x}\rangle  }{\norm{x-\bar{x}}}\leq 0\right\}.
 \end{aligned}
\end{equation}
Similarly, the upper general subdifferential is given by $\partial^+ r(\bar{x}) := -\partial (-r)(\bar{x})$.
A function $r$ is called upper regular if $-r$ is lower regular. 
Examples of upper regular functions include all continuous concave and continuously differentiable functions.

In nonsmooth nonconvex optimization literature, both Clarke subdifferential~\cite{clarke1983}, denoted as $\bar{\partial} r{(\bar{x})}$ of $r$ at $\bar{x}$, and $\partial r(\bar{x})$ have been widely adopted.
If $r$ is lower regular, given the convexity of $\hat{\partial} r(\bar{x})$, $\partial r(\bar{x}) =\bar{\partial} r{(\bar{x})}$~\cite[Theorem~8.6,~8.49,~9.61]{rockafellar1998}.
This equivalence holds for an upper regular $r$ as well (for proof see~\cite{wang2022}) and allows us to use its Clarke subgradient and upper general subgradient interchangeably.  
An important property of these subdifferential is the outer/upper-semicontinuity, necessary in establishing convergence~\cite[Proposition~6.6]{rockafellar1998}. 
In addition, for a Lipschitz $r$, $\bar{\partial} r(\bar{x})$ is locally bounded~\cite[Theorem~9.13]{rockafellar1998}. 

A more restrictive but useful assumption than regularity is lower-$C^k$ introduced in~\cite{Spingarn1981SubmonotoneSO,rockafellar1998}, 
with equivalent definitions in~\cite{daniilidis2004}. 
A function $r:O\to \Rbb$, where $O\subset\Rbb^n$ is open, is said to be lower-$C^k$ on $O$, if on some neighborhood $V$ of each $\bar{x}\in O$ there is a representation
\begin{equation}\label{eqn:lowc1-def-0}
\centering
 \begin{aligned}
	 r(x) =  \underset{\substack{t \in T}}{\text{max}} \ r_t(x),
 \end{aligned}
\end{equation}
where $r_t:\Rbb^n\to\Rbb$ is of class $C^k$ on $V$ and the index set $T$ is a compact space such that $r_t$ 
and all of its partial derivatives through order $k$ are jointly continuous on $(t,x)\in T\times V$. 
A function is called upper-$C^k$ if we replace the $max$ with $min$ in~\eqref{eqn:lowc1-def-0}.
Let $T\subset\Rbb^p$ be compact, the function $r$ is upper-$C^2$ if it can be expressed as
\begin{equation}\label{eqn:uppc2-def-1}
\centering
 \begin{aligned}
	 r(x) =  \underset{\substack{t \in T}}{\text{min}} \ p(t,x)
 \end{aligned}
\end{equation}
for all $x\in O$, such that $p:\Rbb^p\times\Rbb^n\to\Rbb$ and its first- and second-order partial derivatives in $x$ 
depend continuously on $(t,x)$. Clearly, upper-/lower-$C^k$ imply upper-/lower-regularity.
To expand on the upper-$C^2$ examples given in Section~\ref{se:intro},
immediately from its definition~\eqref{eqn:uppc2-def-1}, two-stage (stochastic) optimization problems that are coupled only in a smooth objective 
have upper-$C^2$ objective in the first-stage problem, regardless of how complex the feasible set for second-stage problem is. Many constraint-coupled second-stage problems can be relaxed to obtain an upper-$C^2$ solution function, using for example, quadratic penalty regularization~\cite{wang2022}. 


Further, lower- and upper-type properties have equivalent definitions based on function and subgradient values that are more useful in analysis. Specifically, the finite-valued function $r$ is lower-$C^2$ on $O\subset\Rbb^n$ if and only if there exists $\rho>0$ such that 
$r(\cdot)+\frac{1}{2}\rho\norm{\cdot}^2$ is convex. 
Notice that this definition is already given on $O$ with a uniform $\rho$. Through simple arithmetic and convexity with subgradients, 
for all $x\in O$
\begin{equation}\label{eqn:lowc2-def}
\centering
 \begin{aligned}
	 r(x) - r(\bar{x}) - \langle g,x-\bar{x}\rangle + \frac{\rho}{2}\norm{x-\bar{x}}^2\geq 0,  
 \end{aligned}
\end{equation}
for $g\in\bar{\partial}r(\bar{x})$.   
By the symmetry property of Clarke subgradient~\cite[Proposition~2.3.1]{clarke1983}, if $g\in\bar{\partial} r(\bar{x})$, 
then $-g\in\bar{\partial} (-r)(\bar{x})$. Hence, if $r$ is upper-$C^2$, by definition $-r$ is lower-$C^2$, 
and we have 
\begin{equation}\label{eqn:uppc2-def}
\centering
 \begin{aligned}
	 r(x) - r(\bar{x}) - \langle g,x -\bar{x} \rangle \leq \frac{\rho}{2}\norm{x-\bar{x}}^2, 
 \end{aligned}
\end{equation}
where $g \in \bar{\partial} r(\bar{x})$.
We refer to~\eqref{eqn:uppc2-def} as the upper-$C^2$ inequality. 
Another common assumption for nonsmooth nonconvex objective is prox-regularity~\cite{wang2019}. Given a Lipschitz $r$ on an open set $O\subset \Rbb^n$, if $r$ is lower-$C^2$, it is prox-regular. 

Next, we assume that the feasible set for $x$ is bounded, in line with applications such as power-grid optimization~\cite{petra_21_gollnlp}. To reflect this
assumption directly in problem formulation and to simplify notations, without loss of generality,~\eqref{eqn:opt0} is recast as
\begin{equation} \label{eqn:opt-ms-simp}
 \centering
  \begin{aligned}
   &\underset{\substack{x}}{\text{minimize}} 
	  & & r(x)\\
   &\text{subject to}
	  & & c(x) = 0, \\
	  &&& 0 \leq x \leq x_u,
  \end{aligned}
\end{equation}
where $x_u\in\Rbb^n$.
The dimension of the constraint is $m$, \textit{i.e.}, $c(x)\in \Rbb^m$.

There are multiple optimality conditions in nonsmooth nonconvex analysis, \textit{e.g.}, 
stationary point, Karush–Kuhn–Tucker (KKT) point, Fritz-John critical point. 
In this paper, we assume calmness for problem~\eqref{eqn:opt-ms-simp}~\cite[Section~6.4]{clarke1983} at its local minimum $\bar{x}$.  
Calmness can be viewed as a weak constraint qualification.
In particular, the widely adopted linear
independence constraint qualification (LICQ) 
ensures calmness. Calmness effectively validates the use of a KKT point for Clarke necessary optimality condition~\cite{clarke1983}, where the Lagrange  multiplier for the objective function is nonzero in the Fritz-John critical point equation. 
Hence, for problem~\eqref{eqn:opt-ms-simp}, a necessary first-order optimality condition at a local minimum $\bar{x}$ is that there exists $\bar{\lambda} \in \Rbb^m $,
$\bar{\zeta}_l \in \Rbb^n\geq 0$ and $\bar{\zeta_u} \in \Rbb^n\geq 0$ such that 
\begin{equation} \label{eqn:opt-ms-simp-KKT}
   \centering
  \begin{aligned}
	  0 \in  \bar{\partial} r(\bar{x}) + \nabla c (\bar{x}) \bar{\lambda} - \bar{\zeta}_l + \bar{\zeta}_u&, \\
	  \bar{Z}_u(\bar{x}-x_u) = 0, \bar{Z}_l \bar{x} = 0&,\\
	  c_j(\bar{x}) = 0,j=1,\dots,m&,\\
	  \bar{\lambda}_j c_j(\bar{x}) = 0,j=1,\dots,m&,\\
	  \bar{\zeta}_l,\bar{\zeta}_u,x_u-\bar{x},\bar{x}\geq 0&.
  \end{aligned}
\end{equation}
The matrix $\nabla c(\bar{x})$ is of dimension $n\times m$.
The matrices $\bar{Z}_u,\bar{Z}_l$ are diagonal matrices whose diagonal values are $\bar{\zeta}_u$ and $\bar{\zeta}_l$, respectively.
A point that satisfies~\eqref{eqn:opt-ms-simp-KKT} is called a KKT point of~\eqref{eqn:opt-ms-simp}.
For upper regular functions, it is possible to establish a stronger form 
of subgradient optimality condition as explained in~\cite{mordukhovich2004upp}.

\section{\normalsize Simplified bundle algorithm}\label{sec:alg}
Given the nonsmooth nonconvex objective $r(\cdot)$ in~\eqref{eqn:opt-ms-simp}, in this section we consider bundle methods, which have 
proven to be one of the most successful methods in solving such 
problems~\cite{lemarechal2001}. 
The proposed algorithm simplifies the bundle while retain many of the algorithm's features.
Motivated by the discussion in Section~\ref{sec:prob}, we make the following assumption. 
\begin{assumption}\label{assp:upperC2}
	The Lipschitz continuous objective $r(\cdot)$ in~\eqref{eqn:opt-ms-simp} is upper-$C^2$. 
\end{assumption}
\noindent In particular, inequality~\eqref{eqn:uppc2-def} is satisfied.
We point out that~\cite{noll2009,dao2015} have explored bundle methods and to our knowledge were the first to prove convergence for upper-$C^1$
objective and constraints. However, in their case the parameters of the approximation model are not guaranteed to be finite, in addition to the aforementioned challenges in applying bundle method to constrained optimization.
We assume uniform boundedness on the Hessian of the constraints, a common assumption in literature~\cite{curtis2012}.
\begin{assumption}\label{assp:boundedHc}
	The function $c(\cdot)$ is twice differentiable and there exists a constant $H_u^c\geq 0$ such that $\frac{1}{2} |x^T \nabla^2c_j(x) x | \leq H_u^c\norm{x}^2$ for all feasible $x$ and all $j\in\{1, 2, \ldots, m\}$. 
\end{assumption}
Assumption~\ref{assp:boundedHc} can be relaxed to $c(\cdot)$ being continuously differentiable with Lipschitz continuous first-order derivative~\cite[Chapter 9]{rockafellar1998}. We also call $c(\cdot)$ smooth if Assumption~\ref{assp:boundedHc} is satisfied.
\subsection{Algorithm description}\label{sec:alg-1}
The simplified bundle algorithm is an iterative method with locally approximated objective at each iteration. 
It bears similarity to SQP in the treatment of constraints and can be viewed as its extension.
Compared to conventional bundle methods, the local convex quadratic approximation $\phi_k(\cdot)$ to $r(\cdot)$  
is dependent only on the current serious point instead of a bundle of points. 
More specifically, at iteration $k$ and its serious step $x_k$, $\phi_k(\cdot)$ is
\begin{equation} \label{eqn:opt-rc-appx-x}
 \centering
  \begin{aligned}
	  \phi_k(x) = r(x_k) + g_k^T(x-x_k) + \frac{1}{2}\alpha_k \norm{x-x_k}^2,
  \end{aligned}
\end{equation}
where $g_k\in\bar{\partial} r(x_k)$, and $\alpha_k>0$ is a scalar quadratic coefficient.
Equivalently, denoting $d=x-x_k$, $\phi_k(x)$ can be reformulated as $\Phi_k(d)$ such that
\begin{equation} \label{eqn:opt-rc-appx}
 \centering
  \begin{aligned}
	  \Phi_k(d) &= r_k + g_k^T d +\frac{1}{2}\alpha_k\left\lVert d\right\rVert ^2,\\ 
  \end{aligned}
\end{equation}
where $r_k=r(x_k)$. The function 
value and subgradient at $x_k$ are exact, \textit{i.e.}, $\Phi_k(0)=r_k,\nabla\Phi_k(0)=g_k$.
Furthermore, the constraints in~\eqref{eqn:opt-ms-simp} are linearized. The subproblem to be solved 
at iteration $k$ is 
\begin{equation} \label{eqn:opt-ms-simp-bundle}
 \centering
  \begin{aligned}
   &\underset{\substack{d}}{\text{minimize}} 
	  & & \Phi_k(d)\\
   &\text{subject to}
	  & & c(x_k) + \nabla c(x_k)^T d= 0, \ \ \ d_l^k \leq d \leq d_u^k,
  \end{aligned}
\end{equation}
where $d_l^k=-x_k,d_u^k=x_u-x_k$.
As in SQP methods, it is possible that the linearized constraints 
in~\eqref{eqn:opt-ms-simp-bundle} are infeasible. 
We address this possibility in Section~\ref{sec:lincons}.
In this section, we assume that the solution $d_k$ to~\eqref{eqn:opt-ms-simp-bundle} can be computed.
To measure progress in both the objective and constraints, 
a $l_1$ merit function is adopted in the form of
\begin{equation} \label{eqn:opt-ms-simp-bundle-merit}
 \centering
  \begin{aligned}
	  \phi_{1\theta_k}(x) = r(x) + \theta_k \norm{c(x)}_1, \\
  \end{aligned}
\end{equation}
where $\norm{\cdot}_1$ is the 1-norm and $\theta_k>0$ is a penalty parameter. 
A line search step on the constraints is needed in order to ensure progress
in the merit function~\eqref{eqn:opt-ms-simp-bundle-merit}.
The predicted change on the objective is defined as 
\begin{equation} \label{def:pd}
 \centering
  \begin{aligned}
	  \delta_k =& \Phi_k(0) - \Phi_k(d_k) 
	   = -g_k^T d_k -\frac{1}{2}\alpha_k\norm{d_k}^2.
  \end{aligned}
\end{equation}
To measure whether the approximation model $\Phi_k(\cdot)$ of the objective formed at $x_k$ is still valid at the trial step $x_k+d_k$,
we define the ratio $\rho_k$ as
\begin{equation} \label{eqn:decrease-ratio-1}
 \centering
  \begin{aligned}
	  \rho_k = \begin{cases}
		  r(x_k)-r(x_k+d_k)-\eta_l^+ \delta_k, \ &\delta_k \geq 0,\\
		  r(x_k)-r(x_k+d_k)-\eta_l^- \delta_k, \  &\delta_k <0,
	  \end{cases}
  \end{aligned}
\end{equation}
where $0<\eta_l^+\leq1$ and $\eta_l^-\geq 1$ are two constant parameters of the algorithm. 
If $\rho_k>0$, the model is still valid and the algorithm proceeds to line search.
Otherwise, the trial step $x_k+d_k$ is 
rejected and the parameter $\alpha_k$ is updated to find a different trial step. This process 
draws inspiration from trust-region methods and is a key element in many bundle methods. 

The value of $\delta_k$ is not necessarily positive. Therefore, the 
corresponding threshold $\eta_l^+$ and $\eta_l^-$ differ based on the sign of $\delta_k$. 
In both cases, the actual change in the objective $r(x_k)-r(x_{k}+d_k)$ is allowed to
be slightly worse than the predicted one. 
Namely, if $\delta_k\geq 0$, then the actual decrease can be smaller than the predicted decrease $\delta_k$, 
though a fraction $\eta_l^+$ of  $\delta_k$ is required;
and when $\delta_k<0$, 
the actual increase in objective value can be slightly larger than the predicted increase value $-\delta_k$, 
the degree to which is governed by $\eta_l^- \geq 1$.

Let the line search step size be $\beta_k \in (0,1]$. Then, the serious step taken is given as 
$x_{k+1} = x_k + \beta_k d_k$.
By  letting $\delta_k^{\beta} = \Phi_k(0) - \Phi_k(\beta_k d_{k})$, we have  
\begin{equation} \label{def:pd2}
 \centering
  \begin{aligned}
	  \delta_k^{\beta} =& \Phi_k(0) - \Phi_k(\beta_k d_k)
	   = -\beta_k g_k^T d_k -\frac{1}{2}\beta_k^2 \alpha_k\norm{d_k}^2.
  \end{aligned}
\end{equation}
The predicted and actual change in objective at $x_{k+1}$ is measured by $\rho_k^{\beta}$ defined by
\begin{equation} \label{eqn:decrease-ratio-beta}
 \centering
  \begin{aligned}
	  \rho_k^{\beta} = \begin{cases}
		  r(x_k)-r(x_{k+1})-\eta_{\gamma}^+ \delta_k^{\beta}, \ & \delta_k^{\beta} \geq 0,\\
		  r(x_k)-r(x_{k+1})-\eta_{\gamma}^- \delta_k^{\beta} , \  & \delta_k^{\beta} <0.
	  \end{cases}
  \end{aligned}
\end{equation}
 The parameter $\eta_{\gamma}^{+}\in(0,1]$ and $\eta_{\gamma}^-\geq 1$ can have different values than $\eta_l^+$ and $\eta_l^-$ to increase  
 flexibility of the algorithm.
The first-order optimality conditions of subproblem~\eqref{eqn:opt-ms-simp-bundle} are
\begin{equation} \label{eqn:simp-bundle-KKT}
  \centering
   \begin{aligned}
	   g_k + \alpha_k d_k - \nabla c(x_k) \lambda^{k+1} -\zeta_l^{k+1}+\zeta_u^{k+1} =0,&\\
	   Z_u^{k+1}(d_k-d_u^k) = 0, Z_l^{k+1} (d_k-d_l^k) = 0,&\\
	   \lambda_j^{k+1}\left[c_j(x_k)+\nabla c_j(x_k)^T d_k\right] = 0,j=1,\dots,m,&\\
	   \zeta_u^{k+1},\zeta_l^{k+1},d_k -d_l^k,d_u^k-d_k\geq 0,&\\
	   c(x_k)+\nabla c(x_k)^Td_k= 0,&
   \end{aligned}
 \end{equation}
	where $\lambda^{k+1}\in \Rbb^m$ is the Lagrange multiplier for constraints $c$ and $\zeta_u^{k+1}$ and $\zeta_l^{k+1}\in \Rbb^n$ are the Lagrange multipliers for the bound constraints. 
	The matrices $Z_u^{k+1}$ and $Z_l^{k+1}$ are 
	diagonal matrices with values $\zeta_u^{k+1}$ and $\zeta_l^{k+1}$, respectively.
      An equivalent form of the complementarity conditions 
      of bound constraints based on $x_u$  is
\begin{equation} \label{eqn:simp-bundle-KKT-bound}
  \centering
   \begin{aligned}
	   Z_u^{k+1}(x_k+d_k-x_u) = 0, Z_l^{k+1} (x_k+d_k) &= 0,\\
	   \zeta_u^{k+1},\zeta_l^{k+1}, x_k+d_k ,x_u-x_k-d_k&\geq 0.\\
   \end{aligned}
 \end{equation}
\noindent The line search conditions are given as follows
\begin{equation} \label{eqn:line-search-cond}
 \centering
  \begin{aligned}
	  \theta_k \norm{c(x_k)}_1+\beta_k (\lambda^{k+1})^T c(x_k) &\geq \theta_k \norm{c(x_{k+1})}_1-\eta_{\beta}\frac{1}{2}\alpha_k\beta_k\norm{d_k}^2, \\
	  \theta_k \norm{c(x_k)}_1+\eta_{\gamma}^+ \beta_k (\lambda^{k+1})^T c(x_k) &\geq \theta_k \norm{c(x_{k+1})}_1-\eta_{\beta}\frac{1}{2}\alpha_k\beta_k\norm{d_k}^2, \\
	  \theta_k \norm{c(x_k)}_1+\eta_{\gamma}^- \beta_k (\lambda^{k+1})^T c(x_k) &\geq \theta_k \norm{c(x_{k+1})}_1-\eta_{\beta}\frac{1}{2}\alpha_k\beta_k\norm{d_k}^2. \\
  \end{aligned}
\end{equation}
The differences between the conditions are the parameters $\eta_{\gamma}^+$ and $\eta_{\gamma}^-$ in the second and third inequalities, which stem from the unknown sign of $\delta_k$ and $\delta_k^{\beta}$. 
For simplicity in implementation and analysis, we use the following alternative 
condition 
\begin{equation} \label{eqn:line-search-cond-alt}
 \centering
  \begin{aligned}
	  \theta_k \norm{c(x_k)}_1 - \eta_{\gamma}^-  \beta_k\left| (\lambda^{k+1})^T c(x_k)\right| &\geq \theta_k \norm{c(x_{k+1})}_1-\eta_{\beta}\frac{1}{2}\alpha_k\beta_k\norm{d_k}^2. \\
  \end{aligned}
\end{equation}
We will show that condition~\eqref{eqn:line-search-cond-alt} implies conditions in~\eqref{eqn:line-search-cond} in Lemma~\ref{lem:line-search-alt}.
The simplified bundle method is presented in Algorithm~\ref{alg:simp-bundle}, where $\norm{\cdot}_{\infty}$ 
is the infinity norm. The terms for consistency restoration such as $\pi_{k-1}$ are explained in Section~\ref{sec:lincons}.
\begin{algorithm}
   \caption{Simplified bundle method}\label{alg:simp-bundle}
	\begin{algorithmic}[1]
	    \STATE{Initialize $x_0$, $\alpha_0$, stopping error tolerance $\epsilon$, and $k=1$.
	Choose scalars $0<\eta_l^+\leq 1$, $0<\eta_{\beta}<\eta_{\gamma}^+\leq 1$, $\eta_l^-\geq 1$,$\eta_{\gamma}^-\geq 1$, $\eta_{\alpha}>1$ and $\gamma>0$. 
	    Evaluate the function value $r(x_0)$ and subgradient $g_0$.}
	\FOR{$k=0,1,2,...$}
	  \STATE{Form the quadratic function $\Phi_k$ in~\eqref{eqn:opt-rc-appx} and solve 
		subproblem~\eqref{eqn:opt-ms-simp-bundle} to obtain $d_{k}$ and Lagrange multiplier $\lambda^{k+1}$. (If constraints are inconsistent, enter consistency restoration Algorithm~\ref{alg:simp-bundle-const}. Then, go back to step 2 with $k=k+1$.)}
	  \IF{$\norm{d_k}\leq \epsilon$}
	    \STATE{Stop the iteration and exit the algorithm.}
	  \ENDIF
	  \STATE{Evaluate function value $r(x_k+d_k)$. 
	    Compute $\delta_k$ in~\eqref{def:pd} and $\rho_k$ in~\eqref{eqn:decrease-ratio-1}.}
		\STATE{Set $\theta_k$ in~\eqref{eqn:opt-ms-simp-bundle-merit} with $\theta_k = \max{\{\theta_{k-1},\eta_{\gamma}^- \norm{\lambda^{k+1}}_{\infty}+\gamma\}}$. If Algorithm~\ref{alg:simp-bundle-const} is called for iteration $k-1$, let $\theta_k = \max{\{\pi_{k-1},\eta_{\gamma}^- \norm{\lambda^{k+1}}_{\infty}+\gamma\}}$.}
	  \IF{$\rho_k > 0$}
		  \STATE{Find the line search step size $\beta_k>0$ using backtracking, starting at $\beta_k=1$ and reducing by half 
		  if too large, such that the conditions in~\eqref{eqn:line-search-cond-alt} are satisfied.
		  Evaluate $r(x_{k+1})$ and compute $\rho_k^{\beta}$ in~\eqref{eqn:decrease-ratio-beta}.}
		  \IF{$\rho_k^{\beta} < 0$}
		    \STATE{Break and go to line 14.}
		  \ENDIF
		  \STATE{Take the step $x_{k+1} = x_k+\beta_k d_k$.}
	  \ELSE
		  \STATE{Reject the trial step.}
	    \STATE{Call the chosen $\alpha_k$ update rules to obtain $\alpha_{k+1}=\eta_{\alpha}\alpha_k$.}
	  \ENDIF
       \ENDFOR
    \end{algorithmic}
\end{algorithm}



\subsection{Convergence analysis}\label{sec:alg-convg}
If the algorithm terminates in a finite number of steps, the stopping test at step 4 is satisfied with the error tolerance $\epsilon$ and the solution is considered found. 
Let $\epsilon=0$,  based on step 4, $\norm{d_k}= 0$.
Since $d_k$ solves~\eqref{eqn:opt-ms-simp-bundle}, optimality conditions in~\eqref{eqn:simp-bundle-KKT}
are satisfied, of which the first equation reduces to 
\begin{equation}\label{alg:finite-steps}
 \begin{aligned}
	 g_k- \nabla c(x_k) \lambda^{k+1}-\zeta_l^{k+1}+\zeta_u^{k+1}=0.
 \end{aligned}
\end{equation}
Given $g_k\in\bar{\partial} r(x_k)$, we have $0\in \bar{\partial} r(x_k)-\nabla c(x_k) \lambda^{k+1}-\zeta_l^{k+1}+\zeta_u^{k+1}$. 
In addition, by $c(x_k)+\nabla c(x_k)^T d_k = 0$ from~\eqref{eqn:simp-bundle-KKT}, we have $c(x_k)=0$. Thus, $x_k$ is feasible in terms of $c$. 
Since the bound constraints are enforced in the subproblem~\eqref{eqn:opt-ms-simp-bundle}, the rest of the equations in~\eqref{eqn:opt-ms-simp-KKT} are also satisfied.
Therefore,  $x_k$ satisfies~\eqref{eqn:opt-ms-simp-KKT} and is by definition a KKT point for~\eqref{eqn:opt-ms-simp} as the algorithm exits.
In what follows, the analysis is focused on the case with an infinite number of steps, \textit{i.e.}, $\norm{d_k}>0$. 
We start by showing that the parameter $\alpha_k$ in Algorithm~\ref{alg:simp-bundle} eventually stabilizes, \textit{i.e.}, becomes constant.


\begin{lemma}\label{lem:sufficientdecrease}
	Given Assumption~\ref{assp:upperC2}, Algorithm~\ref{alg:simp-bundle} produces a finite 
	number of rejected steps. 
	As a consequence, the quadratic coefficient $\alpha_k$  is bounded above and stays constant for $k$ large enough. 
\end{lemma}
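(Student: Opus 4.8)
The plan is to track the only mechanism by which $\alpha_k$ can change in Algorithm~\ref{alg:simp-bundle}: a \emph{rejection}, i.e.\ an iteration with $\rho_k\le 0$, which replaces $\alpha_k$ by $\eta_\alpha\alpha_k$ with $\eta_\alpha>1$, whereas every accepted iteration leaves $\alpha_k$ untouched. Hence $\{\alpha_k\}$ is nondecreasing and is unbounded if and only if rejections occur infinitely often. I would prove the lemma by establishing that once $\alpha_k$ exceeds the uniform upper-$C^2$ modulus $\rho$ of~\eqref{eqn:uppc2-def}, the acceptance test $\rho_k>0$ necessarily holds, so no further rejection is possible; this caps $\alpha_k$, and since $\alpha_k$ can only take the values $\eta_\alpha^{j}\alpha_0$, it must become constant for $k$ large.

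The crux is a lower bound on $\rho_k$. First I would note that every iterate $x_k$ and every trial point $x_k+d_k$ stay in the feasible box $\{0\le x\le x_u\}$ — the trial point by the bounds $d_l^k\le d_k\le d_u^k$ in~\eqref{eqn:opt-ms-simp-bundle}, and $x_{k+1}=x_k+\beta_k d_k$ with $\beta_k\in(0,1]$ by convexity of the box and induction — so the upper-$C^2$ inequality~\eqref{eqn:uppc2-def} holds with one and the same $\rho>0$ at $\bar{x}=x_k$, $x=x_k+d_k$, and $g_k\in\bar{\partial}r(x_k)$. Rearranging~\eqref{eqn:uppc2-def} gives $r(x_k)-r(x_k+d_k)\ge -g_k^Td_k-\frac{\rho}{2}\norm{d_k}^2$, and substituting $-g_k^Td_k=\delta_k+\frac{1}{2}\alpha_k\norm{d_k}^2$ from~\eqref{def:pd},
\[
  r(x_k)-r(x_k+d_k)\ \ge\ \delta_k+\frac{1}{2}(\alpha_k-\rho)\norm{d_k}^2 .
\]
Substituting this into the definition~\eqref{eqn:decrease-ratio-1} of $\rho_k$ and splitting on the sign of $\delta_k$: for $\delta_k\ge0$ we obtain $\rho_k\ge(1-\eta_l^+)\delta_k+\frac{1}{2}(\alpha_k-\rho)\norm{d_k}^2$ with $(1-\eta_l^+)\delta_k\ge0$ since $\eta_l^+\le1$; for $\delta_k<0$ we obtain $\rho_k\ge(1-\eta_l^-)\delta_k+\frac{1}{2}(\alpha_k-\rho)\norm{d_k}^2$ with $(1-\eta_l^-)\delta_k\ge0$ since $\eta_l^-\ge1$. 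Hence in all cases $\rho_k\ge\frac{1}{2}(\alpha_k-\rho)\norm{d_k}^2$, which is strictly positive as soon as $\alpha_k>\rho$, because $\norm{d_k}>0$ at any processed iteration (otherwise the algorithm has already terminated at its stopping test).

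To finish, I would argue by contradiction: if infinitely many steps were rejected, then $\alpha_k=\eta_\alpha^{j_k}\alpha_0$, where $j_k$, the number of rejections so far, tends to infinity, so $\alpha_k\to\infty$; choosing $k$ with $\alpha_k>\rho$ forces $\rho_k>0$ by the bound above, contradicting that iteration $k$ is a rejection. Therefore only finitely many rejections occur; if $k^*$ is the index of the last one, then necessarily $\alpha_{k^*}\le\rho$ (otherwise $\rho_{k^*}>0$, not a rejection), whence $\alpha_k\le\max\{\alpha_0,\eta_\alpha\rho\}$ for all $k$ and $\alpha_k$ is constant for $k>k^*$. The step I expect to be the main obstacle is the case analysis on the sign of $\delta_k$: since $d=0$ need not satisfy the linearized equality in~\eqref{eqn:opt-ms-simp-bundle}, $\delta_k$ may be negative, which is exactly why~\eqref{eqn:decrease-ratio-1} carries the two thresholds $\eta_l^+$ and $\eta_l^-$ and why both subcases must be driven down to $\rho_k\ge\frac{1}{2}(\alpha_k-\rho)\norm{d_k}^2$; a minor additional point is to confirm that the line-search failure branch ($\rho_k^\beta<0$) does not itself modify $\alpha_k$, so it does not contribute to the count of rejected steps.
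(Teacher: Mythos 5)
Your overall strategy coincides with the paper's: both proofs use the upper-$C^2$ inequality~\eqref{eqn:uppc2-def} to bound the actual decrease $r(x_k)-r(x_k+d_k)$ from below by the predicted decrease once $\alpha_k$ exceeds the upper-$C^2$ modulus (the paper's threshold $\alpha_k>2C$ is your $\alpha_k>\rho$), split on the sign of $\delta_k$ using $\eta_l^+\le 1\le\eta_l^-$, and conclude that only finitely many rejections (each multiplying $\alpha_k$ by $\eta_\alpha>1$) can occur. Your quantitative form $\rho_k\ge\tfrac{1}{2}(\alpha_k-\rho)\norm{d_k}^2$ is a slightly sharper packaging of the same computation.

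There is, however, one concrete error: your closing remark that the line-search failure branch ($\rho_k^\beta<0$) ``does not itself modify $\alpha_k$'' misreads the control flow of Algorithm~\ref{alg:simp-bundle}. When $\rho_k^\beta<0$ the algorithm jumps to the ``Reject the trial step'' line and then executes the update $\alpha_{k+1}=\eta_\alpha\alpha_k$, so this branch \emph{does} contribute to the count of rejected steps, and your argument must also show that $\rho_k^\beta>0$ once $\alpha_k$ is large. Fortunately this is immediate by the identical computation with $\beta_k d_k$ in place of $d_k$: the upper-$C^2$ inequality gives $r(x_k)-r(x_k+\beta_k d_k)\ge \delta_k^\beta+\tfrac{1}{2}(\alpha_k-\rho)\beta_k^2\norm{d_k}^2$, and the same sign split on $\delta_k^\beta$ with $\eta_\gamma^+\le 1\le\eta_\gamma^-$ yields $\rho_k^\beta\ge\tfrac{1}{2}(\alpha_k-\rho)\beta_k^2\norm{d_k}^2>0$ — which is exactly how the paper closes this case via~\eqref{eqn:opt-ms-appx-rec-merit-beta}. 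With that addition your proof is complete and matches the paper's.
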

\begin{proof}
	From the upper-$C^2$ inequality~\eqref{eqn:uppc2-def}, we have  
\begin{equation} \label{eqn:opt-ms-appx-rec-p1}
 \centering
  \begin{aligned}
	  r(x_k+d)- r_k - g_{k}^Td \leq C \norm{d}^2,\\
  \end{aligned}
\end{equation}
for a fixed constant $C>0$ on the bounded domain of $x$.
In the first part of the proof we show that if at some iteration $k$, $\alpha_k$ satisfies
\begin{equation} \label{eqn:opt-ms-appx-rec-p2}
 \centering
  \begin{aligned}
	  \alpha_k > 2 C, \\
  \end{aligned}
\end{equation}
then no rejected steps can occur in Algorithm~\ref{alg:simp-bundle} after iteration $k$, or equivalently, $\rho_t> 0$ and $\rho_t^\beta> 0$ for all iterations $t\geq k$.
The inequalities~\eqref{eqn:opt-ms-appx-rec-p1} and \eqref{eqn:opt-ms-appx-rec-p2} imply
\begin{equation} \label{eqn:opt-ms-appx-rec-obj}
 \centering
  \begin{aligned}
	  r_k - r(x_k+d_k) \geq& -g_{k}^T d_k -C \norm{d_k}^2\\
                        >& -g_{k}^T d_k -\frac{1}{2}\alpha_k \norm{d_k}^2
				 = \Phi_k(0) -\Phi_k(d_k). \\
  \end{aligned}
\end{equation}
	As in the definition~\eqref{eqn:decrease-ratio-1} of $\rho_k$, we distinguish between two cases based on the sign of $\delta_k$.
	If $\delta_k = \Phi_k(0) -\Phi_k(d_k) \geq 0$, then since $0<\eta_l^+\leq 1$,~\eqref{eqn:opt-ms-appx-rec-obj} gives  
\begin{equation} \label{eqn:opt-ms-appx-rec-obj2}
 \centering
  \begin{aligned}
	  r_k - r(x_k+d_k) >& \Phi_k(0) -\Phi_k(d_k) 
			\geq  \eta_l^+\left[ \Phi_k(0) -\Phi_k(d_k)\right]. 
  \end{aligned}
\end{equation}
If $\delta_k = \Phi_k(0) -\Phi_k(d_k) < 0$, given $\eta_l^-\geq 1$, we can also write based on~\eqref{eqn:opt-ms-appx-rec-obj} that  
\begin{equation} \label{eqn:opt-ms-appx-rec-obj3}
 \centering
  \begin{aligned}
	  r_k - r(x_k+d_k) >& \Phi_k(0) -\Phi_k(d_k) 
			\geq  \eta_l^-\left[ \Phi_k(0) -\Phi_k(d_k)\right]. 
  \end{aligned}
\end{equation}
As a consequence, by definition~\eqref{eqn:decrease-ratio-1} we conclude 
 $\rho_k > 0$. Similar inequalities hold for  $x_{k+1} = x_k+\beta_k d_k$ since one can write based on~\eqref{eqn:opt-ms-appx-rec-p1} that
\begin{equation} \label{eqn:opt-ms-appx-rec-merit-beta}
 \centering
  \begin{aligned}
	  r(x_k) - r(x_{k+1}) \geq& -\beta_k g_{k}^Td_k - C \beta_k^2 \norm{d_k}^2 
                                 > -\beta_k g_{k}^Td_k -\frac{1}{2}\alpha_k\beta_k^2 \norm{d_k}^2 \\
				 =& \Phi_k(0) -\Phi_k(\beta_k d_k).\\
  \end{aligned}
\end{equation}
Same reasoning that leads to~\eqref{eqn:opt-ms-appx-rec-obj2} and~\eqref{eqn:opt-ms-appx-rec-obj3} for $\eta_{\gamma}^+$ and $\eta_{\gamma}^-$ implies that  $\rho_k^{\beta}> 0$. Therefore, for all $t\geq k$, one has $\rho_t>0, \rho_t^{\beta}>0$ and thus $\alpha_t =\alpha_k$. 
Since Algorithm~\ref{alg:simp-bundle} increases $\alpha_k$ monotonically with a ratio $\eta_{\alpha}>1$ whenever a rejected step is encountered,
only a finite number of rejected steps are needed to reach $\alpha_k > 2 C$ and is followed by an infinite number of serious steps. 

For the second part of the proof, suppose now $\alpha_k \leq 2C$ for all $k$. 
This is only possible when no or a finite number of rejected steps have been taken by the algorithm followed by all serious steps. This completes the proof. 
\end{proof}

\begin{remark}\label{rmrk:realalpha}
For simplicity, $\alpha_k$ is increased monotonically in the algorithm. In practice, we encourage that 
$\alpha_k$ be reduced if $\rho_k>0$ and $\eta_l^+>\eta_u^+$  where $\eta_u^+$ is an upper threshold for $\eta_l^+$.
In other words, if the actual decrease in objective is bigger than a certain ratio of the predicted decrease, 
	then $\Phi_k(\cdot)$ is a good approximation and we reduce the quadratic coefficient to encourage larger step size.
This adaptation is for when the upper-$C^2$ constant $C$ is not uniform in the entire domain. 
A decrease in $\alpha_k$ allows the algorithm to adjust better to  
	the local upper-$C^2$ constant that could be relatively small versus $C$, which could improve convergence.
	In addition, one can use a diagonal matrix instead of a scalar $\alpha_k$ to accelerate convergence without compromising the sparse structure of the linear system~\cite{wang2022}.
\end{remark}

\begin{lemma}\label{lem:line-search-merit}
	Given Assumption~\ref{assp:boundedHc}, if the	Lagrange multipliers $\lambda^{k+1}$ of~\eqref{eqn:opt-ms-simp-bundle} is bounded, the line search process of Algorithm~\ref{alg:simp-bundle} is well-defined, in that 
	there exists $\beta_k\in (0, 1]$ that satisfies the line search conditions in~\eqref{eqn:line-search-cond-alt} and can be found in a finite number of steps through backtracking step 9 .
\end{lemma}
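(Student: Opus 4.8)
The plan is to show that the line search inequality~\eqref{eqn:line-search-cond-alt} is satisfied for every sufficiently small $\beta_k\in(0,1]$; since the backtracking in step 9 starts at $\beta_k=1$ and halves, only finitely many reductions are then needed to reach such a value. Three ingredients enter: a second-order Taylor expansion of $c$ along the segment from $x_k$ to $x_k+\beta_k d_k$, with remainder controlled by Assumption~\ref{assp:boundedHc}; the identity $\nabla c(x_k)^T d_k=-c(x_k)$ supplied by the linearized feasibility in~\eqref{eqn:simp-bundle-KKT}; and the rule $\theta_k\ge\eta_\gamma^-\norm{\lambda^{k+1}}_\infty+\gamma$ from step 7, which is meaningful precisely because $\lambda^{k+1}$ is a finite (bounded) vector.

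First I would write, for each $j\in\{1,\dots,m\}$, $c_j(x_k+\beta_k d_k)=c_j(x_k)+\beta_k\nabla c_j(x_k)^T d_k+\frac{1}{2}\beta_k^2 d_k^T\nabla^2 c_j(\xi_j)d_k$ for some $\xi_j$ on the segment. Since $x_k$ is feasible for the box and $x_k+d_k$ also lies in it (because $d_l^k\le d_k\le d_u^k$), convexity of the box puts the whole segment inside it, so Assumption~\ref{assp:boundedHc} (equivalently, Lipschitz continuity of $\nabla c_j$) bounds the remainder in absolute value by a multiple of $\beta_k^2\norm{d_k}^2$, say by $H_u^c\beta_k^2\norm{d_k}^2$. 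Using $\nabla c_j(x_k)^T d_k=-c_j(x_k)$ then gives $|c_j(x_k+\beta_k d_k)|\le(1-\beta_k)|c_j(x_k)|+H_u^c\beta_k^2\norm{d_k}^2$ for $\beta_k\le 1$, and summing over $j$ yields $\norm{c(x_{k+1})}_1\le(1-\beta_k)\norm{c(x_k)}_1+m H_u^c\beta_k^2\norm{d_k}^2$.

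Substituting this bound for $\norm{c(x_{k+1})}_1$ on the right-hand side of~\eqref{eqn:line-search-cond-alt}, cancelling the common term $\theta_k(1-\beta_k)\norm{c(x_k)}_1$, and dividing by $\beta_k>0$, it suffices to verify $\theta_k\norm{c(x_k)}_1-\eta_\gamma^-|(\lambda^{k+1})^T c(x_k)|\ge m H_u^c\theta_k\beta_k\norm{d_k}^2-\frac{1}{2}\eta_\beta\alpha_k\norm{d_k}^2$. For the left-hand side I bound $|(\lambda^{k+1})^T c(x_k)|\le\norm{\lambda^{k+1}}_\infty\norm{c(x_k)}_1$ and invoke $\theta_k\ge\eta_\gamma^-\norm{\lambda^{k+1}}_\infty+\gamma$ to obtain the lower bound $\gamma\norm{c(x_k)}_1\ge 0$. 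The right-hand side is continuous and nondecreasing in $\beta_k$ and equals $-\frac{1}{2}\eta_\beta\alpha_k\norm{d_k}^2<0$ at $\beta_k=0$ (recall $\alpha_k>0$, $\eta_\beta>0$, $\norm{d_k}>0$), hence it is nonpositive whenever $\beta_k\le\eta_\beta\alpha_k/(2m\theta_k H_u^c)$ (and for every $\beta_k\in(0,1]$ when $H_u^c=0$). So~\eqref{eqn:line-search-cond-alt} holds for all $\beta_k$ in the nonempty interval $(0,\min\{1,\eta_\beta\alpha_k/(2m\theta_k H_u^c)\}]$, and the halving sequence $1,1/2,1/4,\dots$ enters that interval after finitely many steps, proving the line search is well-defined and finitely terminating. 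Finally I would cite Lemma~\ref{lem:line-search-alt} to record that~\eqref{eqn:line-search-cond-alt} implies the three conditions in~\eqref{eqn:line-search-cond}.

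The only nonroutine point, and thus the main obstacle, is the remainder estimate: one must ensure the intermediate points $\xi_j$ lie where Assumption~\ref{assp:boundedHc} applies (handled by convexity of the box) and keep careful track of the constants $\frac{1}{2}$ and $m$ so that the admissible range of $\beta_k$ — and hence finiteness of the backtracking — comes out correctly. Everything past that is elementary algebra, with the penalty update in step 7 doing exactly the work needed to keep the left-hand side nonnegative.
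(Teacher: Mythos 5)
Your proposal is correct and follows essentially the same route as the paper's proof: Taylor expansion of $c_j$ with the remainder controlled by Assumption~\ref{assp:boundedHc}, elimination of the linear term via $c_j(x_k)+\nabla c_j(x_k)^Td_k=0$, the bound $|(\lambda^{k+1})^Tc(x_k)|\le\norm{\lambda^{k+1}}_\infty\norm{c(x_k)}_1$ combined with $\theta_k\ge\eta_\gamma^-\norm{\lambda^{k+1}}_\infty+\gamma$, and the identical admissible threshold $\beta_k\le\eta_\beta\alpha_k/(2m\theta_k H_u^c)$ guaranteeing finite backtracking. The only cosmetic differences are that you divide through by $\beta_k$ before comparing sides and explicitly note that the box's convexity keeps the intermediate Taylor points feasible.
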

\begin{proof}
If $\lambda^{k+1}$ is bounded throughout the algorithm, then a finite and constant $\theta_k$ is guaranteed as well for $k$ large enough based on how it is chosen in Algorithm~\ref{alg:simp-bundle} step 7.
	Since constraints $c$ are twice differentiable, we apply Taylor expansion to the $j$th equality constraint, $j=1,\dots,m$, at $x_k$ for $x_{k+1}=x_k+\beta_k d_k$ to obtain 
   \begin{equation} \label{eqn:simp-bundle-c-ls-pf-1}
   \centering
    \begin{aligned}
	    c_j(x_{k+1})  = & c_j(x_k)+ \beta_k \nabla c_j(x_k)^T d_{k} + \frac{1}{2}\beta_k^2 d_k^T H_{k\beta}^{j} d_k,
   \end{aligned}
   \end{equation}
	where $H_{k\beta}^j$ is the Hessian $\nabla^2 c_j(\cdot)$ at a point on the line segment between $x_k$ and $x_{k+1}$. 
Given $d_k$ as the solution to~\eqref{eqn:opt-ms-simp-bundle}, we have that $c_j(x_k) +  \nabla c_j(x_k)^T d_k = 0$ and, as a consequence, we can write based on~\eqref{eqn:simp-bundle-c-ls-pf-1} that
   \begin{equation*} 
    \centering
    \begin{aligned}
	    c_j(x_{k+1}) = (1-\beta_k) c_j(x_k)+ \frac{1}{2}\beta_k^2 d_k^T H_{k\beta}^{j} d_k.\\
    \end{aligned}
   \end{equation*}
By Assumption~\ref{assp:boundedHc}, $\left|c_j(x_{k+1})\right|  \leq \left|(1-\beta_k) c_j(x_k)\right| + \beta_k^2 H^{c}_u \norm{d_k}^2$, which implies that
   \begin{equation} \label{eqn:simp-bundle-c-ls-pf-2.5}
   \centering
    \begin{aligned}
	    \norm{c(x_{k+1})}_1  \leq 
	     (1-\beta_k) \norm{c(x_k)}_1 + m\beta_k^2 H^{c}_u \norm{d_k}^2.
    \end{aligned}
   \end{equation}
On the other hand, simple norm inequalities imply
\begin{equation} \label{eqn:simp-bundle-c-ls-pf-norm}
   \centering
    \begin{aligned}
	    \beta_k \left|(\lambda^{k+1})^T c(x_k)\right| \leq \beta_k\norm{\lambda^{k+1}}_{\infty} \norm{c(x_k)}_1.
    \end{aligned}
   \end{equation}
	Since step 7 of the algorithm chooses $\theta_k \geq \eta_{\gamma}^- \norm{\lambda^{k+1}}_{\infty}+\gamma$, where $\eta_{\gamma}^-$ and $\gamma$ are positive
	constants, we can write based on~\eqref{eqn:simp-bundle-c-ls-pf-2.5} and~\eqref{eqn:simp-bundle-c-ls-pf-norm} that
   \begin{equation} \label{eqn:simp-bundle-c-ls-pf-3} 
   \centering
    \begin{aligned}
	    &\theta_k\norm{c(x_k)}_1 -\eta_{\gamma}^- \beta_k \left|(\lambda^{k+1})^T c(x_k)\right|-\theta_k \norm{c(x_{k+1})}_1  \\ 
	    &\hspace{1.1cm}\geq(\theta_k - \eta_{\gamma}^- \beta_k\norm{\lambda^{k+1}}_{\infty}) \norm{c(x_k)}_1 -\theta_k(1-\beta_k) \norm{c(x_k)}_1 - \theta_km\beta_k^2 H^{c}_u \norm{d_k}^2\\
		&\hspace{1.1cm}=(\theta_k\beta_k- \eta_{\gamma}^- \beta_k\norm{\lambda^{k+1}}_{\infty}) \norm{c(x_k)}_1- \theta_km\beta_k^2 H^c_u  \norm{d_k}^2 \\
		&\hspace{1.1cm}\geq\beta_k \gamma \norm{c(x_k)}_1- \theta_k m \beta_k^2H^c_u\norm{d_k}^2.
    \end{aligned}
   \end{equation}
Therefore, if $\beta_k$ is reduced through the backtracking of Algorithm~\ref{alg:simp-bundle} to satisfy
   \begin{equation} \label{eqn:simp-bundle-c-ls-pf-4}
   \centering
    \begin{aligned}
	   0< \beta_k \leq \frac{\eta_{\beta} \alpha_k}{2 H^c_u\theta_k m},
    \end{aligned}
   \end{equation}
then~\eqref{eqn:line-search-cond-alt} is satisfied.
Using ceiling function $\lceil\cdot \rceil$, which returns the least integer greater than the input, we can write 
   \begin{equation} \label{eqn:simp-bundle-c-ls-pf-5}
   \centering
    \begin{aligned}
	    \beta_k \geq \frac{1}{2} ^{\lceil \log_{\frac{1}{2}} \frac{\eta_{\beta} \alpha_k}{2 H^c_u\theta_k m} \rceil}.
    \end{aligned}
   \end{equation}
We remark that both the denominator and numerator in~\eqref{eqn:simp-bundle-c-ls-pf-4} are positive and independent of the line search. 
Further, by Lemma~\ref{lem:sufficientdecrease} and boundedness of $\lambda^{k+1}$, all terms in~\eqref{eqn:simp-bundle-c-ls-pf-4} remain finite. Therefore, the backtracking stops in finite steps.
%
%
\end{proof}

\begin{lemma}\label{lem:line-search-alt}
	The $\beta_k \in (0,1]$ that meets the line search condition in~\eqref{eqn:line-search-cond-alt} also satisfies the conditions 
	from~\eqref{eqn:line-search-cond}, or equivalently
\begin{equation} \label{eqn:simp-bundle-etagamma}
   \centering
    \begin{aligned}
	   \beta_k(\lambda^{k+1})^T c(x_k)\geq -\theta_k \norm{c(x_k)}_1+ \theta_k \norm{c(x_{k+1})}_1 - \eta_{\beta}\frac{1}{2}\alpha_k\beta_k\norm{d_k}^2,\\
	  \eta_{\gamma}^+\beta_k(\lambda^{k+1})^T c(x_k) \geq  -\theta_k \norm{c(x_k)}_1+ \theta_k \norm{c(x_{k+1})}_1   - \eta_{\beta}\frac{1}{2}\alpha_k\beta_k\norm{d_k}^2,\\
	 \eta_{\gamma}^-\beta_k(\lambda^{k+1})^T c(x_k) \geq  -\theta_k \norm{c(x_k)}_1+ \theta_k \norm{c(x_{k+1})}_1 - \eta_{\beta}\frac{1}{2}\alpha_k\beta_k\norm{d_k}^2.\\
    \end{aligned}
   \end{equation}
\end{lemma}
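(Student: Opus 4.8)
The plan is to deduce each of the three inequalities in~\eqref{eqn:simp-bundle-etagamma} from the single hypothesis~\eqref{eqn:line-search-cond-alt}, using nothing more than the elementary estimate $-|a|\le a\le|a|$ together with the ordering of constants $0<\eta_{\gamma}^+\le 1\le\eta_{\gamma}^-$ fixed in Algorithm~\ref{alg:simp-bundle}. First I would record that the three inequalities in~\eqref{eqn:line-search-cond} are obtained from those in~\eqref{eqn:simp-bundle-etagamma} simply by moving the term $\theta_k\norm{c(x_k)}_1$ to the other side, so it suffices to establish~\eqref{eqn:simp-bundle-etagamma}. Next I observe that the three inequalities in~\eqref{eqn:simp-bundle-etagamma} have the common right-hand side
\[
  R_k := -\theta_k\norm{c(x_k)}_1 + \theta_k\norm{c(x_{k+1})}_1 - \eta_{\beta}\frac{1}{2}\alpha_k\beta_k\norm{d_k}^2,
\]
and that, after rearranging, the hypothesis~\eqref{eqn:line-search-cond-alt} is exactly the statement $-\eta_{\gamma}^-\beta_k\bigl|(\lambda^{k+1})^Tc(x_k)\bigr|\ge R_k$.

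The key step is then to verify, for each coefficient $s\in\{1,\ \eta_{\gamma}^+,\ \eta_{\gamma}^-\}$ that multiplies $\beta_k(\lambda^{k+1})^Tc(x_k)$ on the left-hand sides of~\eqref{eqn:simp-bundle-etagamma}, the two-step chain
\[
  s\,\beta_k(\lambda^{k+1})^Tc(x_k)\ \ge\ -\eta_{\gamma}^-\beta_k\bigl|(\lambda^{k+1})^Tc(x_k)\bigr|\ \ge\ R_k,
\]
in which the second inequality is precisely the hypothesis. Writing $a:=(\lambda^{k+1})^Tc(x_k)$ and using $\beta_k>0$ to cancel $\beta_k$, the first inequality reduces to $s\,a\ge-\eta_{\gamma}^-|a|$. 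For $s=\eta_{\gamma}^-$ this is $\eta_{\gamma}^-(a+|a|)\ge0$, which holds because $a+|a|\ge0$; for $s=1$ it follows from $a\ge-|a|\ge-\eta_{\gamma}^-|a|$ since $\eta_{\gamma}^-\ge1$; and for $s=\eta_{\gamma}^+$ one chains $\eta_{\gamma}^+a\ge-\eta_{\gamma}^+|a|\ge-|a|\ge-\eta_{\gamma}^-|a|$ using $0<\eta_{\gamma}^+\le1\le\eta_{\gamma}^-$. Taking the three cases together yields the three inequalities of~\eqref{eqn:simp-bundle-etagamma}, hence those of~\eqref{eqn:line-search-cond}.

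I do not expect a genuine obstacle here: the claim is a short sign-and-magnitude bookkeeping argument. The only points that require mild care are keeping $\beta_k>0$ in view (so it may be cancelled without reversing an inequality) and noting that the absolute-value term in~\eqref{eqn:line-search-cond-alt} dominates every signed quantity $s\,(\lambda^{k+1})^Tc(x_k)$ precisely because $\eta_{\gamma}^-$ was chosen to be the largest of the constants that can appear as $s$.
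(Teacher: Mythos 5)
Your proof is correct and follows essentially the same route as the paper: reduce everything to the common right-hand side, note that~\eqref{eqn:line-search-cond-alt} says $-\eta_{\gamma}^-\beta_k\bigl|(\lambda^{k+1})^Tc(x_k)\bigr|$ dominates that right-hand side, and then check $s\,a\ge -\eta_{\gamma}^-|a|$ for each coefficient $s\in\{1,\eta_{\gamma}^+,\eta_{\gamma}^-\}$ using $-|a|\le a$ and the ordering $0<\eta_{\gamma}^+\le 1\le\eta_{\gamma}^-$. No gaps; this matches the paper's argument in~\eqref{eqn:line-search-alt-pf-1}--\eqref{eqn:line-search-alt-pf-4}.
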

\begin{proof}
   From the absolute value inequality, we have  
   \begin{equation} \label{eqn:line-search-alt-pf-1}
   \centering
    \begin{aligned}
	    \beta_k(\lambda^{k+1})^T c(x_k) \geq& -\beta_k\left|(\lambda^{k+1})^T c(x_k)\right|,\\ 
	     \eta_{\gamma}^+ \beta_k(\lambda^{k+1})^T c(x_k) \geq& -\eta_{\gamma}^+ \beta_k\left|(\lambda^{k+1})^T c(x_k)\right|,\\
	     \eta_{\gamma}^- \beta_k(\lambda^{k+1})^T c(x_k) \geq& -\eta_{\gamma}^-\beta_k\left|(\lambda^{k+1})^T c(x_k)\right|.\\
    \end{aligned}
   \end{equation}
	Given that $0<\eta_{\gamma}^{+}\leq 1\leq \eta_{\gamma}^-$, 
   \begin{equation} \label{eqn:line-search-alt-pf-2}
   \centering
    \begin{aligned}
            -\eta_{\gamma}^-\beta_k\left|(\lambda^{k+1})^T c(x_k)\right|\leq  -\beta_k\left|(\lambda^{k+1})^T c(x_k)\right| \leq -\eta_{\gamma}^+ \beta_k\left|(\lambda^{k+1})^T c(x_k)\right|.
    \end{aligned}
   \end{equation}
	Therefore, from~\eqref{eqn:line-search-alt-pf-1} and~\eqref{eqn:line-search-alt-pf-2}
   \begin{equation} \label{eqn:line-search-alt-pf-3}
   \centering
    \begin{aligned}
	    \beta_k(\lambda^{k+1})^T c(x_k) \geq& -\eta_{\gamma}^- \beta_k\left|(\lambda^{k+1})^T c(x_k)\right|,\\ 
	     \eta_{\gamma}^+ \beta_k(\lambda^{k+1})^T c(x_k) \geq& -\eta_{\gamma}^- \beta_k\left|(\lambda^{k+1})^T c(x_k)\right|,\\
	     \eta_{\gamma}^- \beta_k(\lambda^{k+1})^T c(x_k) \geq& -\eta_{\gamma}^-\beta_k\left|(\lambda^{k+1})^T c(x_k)\right|.\\
    \end{aligned}
   \end{equation}
   From the line search condition~\eqref{eqn:line-search-cond-alt}, we have
   \begin{equation} \label{eqn:line-search-alt-pf-4}
   \centering
    \begin{aligned}
	-\eta_{\gamma}^-\beta_k\left|(\lambda^{k+1})^T c(x_k)\right| \geq -\theta_k \norm{c(x_k)}_1+ \theta_k \norm{c(x_{k+1})}_1 - \eta_{\beta}\frac{1}{2}\alpha_k\beta_k\norm{d_k}^2.\\
    \end{aligned}
   \end{equation}
   Combined with~\eqref{eqn:line-search-alt-pf-3} the proof is completed.
\end{proof}

\begin{lemma}\label{lem:sufficientdecrease-merit}
	The serious step $x_{k+1} = x_k+\beta_k d_k$ is a decreasing step for the merit function~\eqref{eqn:opt-ms-simp-bundle-merit} if $\beta_k$ is obtained via line search. Moreover, if $\lambda^k$ is bounded, the speed of decrease satisfies $\phi_{1\theta_k}(x_k)-\phi_{1\theta_k}(x_{k+1})>c_{\phi} \norm{d_k}^2$ for some $c_{\phi}>0$. 
\end{lemma}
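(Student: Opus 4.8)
The plan is to split the merit decrease as
$\phi_{1\theta_k}(x_k)-\phi_{1\theta_k}(x_{k+1}) = \big(r(x_k)-r(x_{k+1})\big) + \theta_k\big(\norm{c(x_k)}_1 - \norm{c(x_{k+1})}_1\big)$
and bound each piece. For the objective piece, a serious step is taken only when $\rho_k^\beta\geq 0$, so the two branches of~\eqref{eqn:decrease-ratio-beta} give $r(x_k)-r(x_{k+1})\geq \eta_\gamma^+\delta_k^\beta$ when $\delta_k^\beta\geq 0$ and $r(x_k)-r(x_{k+1})\geq \eta_\gamma^-\delta_k^\beta$ when $\delta_k^\beta<0$. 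For the constraint piece, since $\beta_k$ comes from the line search~\eqref{eqn:line-search-cond-alt}, Lemma~\ref{lem:line-search-alt} makes all three inequalities of~\eqref{eqn:line-search-cond} (equivalently~\eqref{eqn:simp-bundle-etagamma}) available; rearranging the second and third gives $\theta_k(\norm{c(x_k)}_1-\norm{c(x_{k+1})}_1)\geq -\eta_\gamma^+\beta_k(\lambda^{k+1})^Tc(x_k)-\eta_\beta\frac{1}{2}\alpha_k\beta_k\norm{d_k}^2$ and the same with $\eta_\gamma^-$. The idea is to pair the sign-branch used for the objective with the correspondingly parameterized ($\eta_\gamma^+$ or $\eta_\gamma^-$) constraint inequality so that the $\beta_k(\lambda^{k+1})^Tc(x_k)$ terms cancel.

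The key algebraic step is a lower bound on $\delta_k^\beta$ obtained from the subproblem optimality conditions~\eqref{eqn:simp-bundle-KKT}. From the stationarity equation, $g_k=-\alpha_kd_k+\nabla c(x_k)\lambda^{k+1}+\zeta_l^{k+1}-\zeta_u^{k+1}$; taking the inner product with $d_k$ and using $\nabla c(x_k)^Td_k=-c(x_k)$ together with the complementarity relations $(\zeta_l^{k+1})^T(d_k-d_l^k)=0$ and $(\zeta_u^{k+1})^T(d_k-d_u^k)=0$ (recall $d_l^k=-x_k$, $d_u^k=x_u-x_k$) gives $g_k^Td_k = -\alpha_k\norm{d_k}^2 -(\lambda^{k+1})^Tc(x_k) -(\zeta_l^{k+1})^Tx_k -(\zeta_u^{k+1})^T(x_u-x_k)$. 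Since $\zeta_l^{k+1},\zeta_u^{k+1}\geq 0$ and $0\leq x_k\leq x_u$ — the latter holding by an easy induction, as each serious step $x_{k+1}=(1-\beta_k)x_k+\beta_k(x_k+d_k)$ is a convex combination of points in $[0,x_u]$ — the last two terms are nonpositive, so $-g_k^Td_k\geq \alpha_k\norm{d_k}^2+(\lambda^{k+1})^Tc(x_k)$. Substituting into $\delta_k^\beta=-\beta_kg_k^Td_k-\frac{1}{2}\beta_k^2\alpha_k\norm{d_k}^2$ and using $\beta_k\in(0,1]$ (so $1-\frac{1}{2}\beta_k\geq\frac{1}{2}$) yields $\delta_k^\beta\geq \frac{1}{2}\beta_k\alpha_k\norm{d_k}^2+\beta_k(\lambda^{k+1})^Tc(x_k)$.

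Putting the pieces together: when $\delta_k^\beta\geq 0$, multiply the lower bound on $\delta_k^\beta$ by $\eta_\gamma^+>0$, add it to the $\eta_\gamma^+$-constraint inequality, and use $r(x_k)-r(x_{k+1})\geq\eta_\gamma^+\delta_k^\beta$; the cross terms cancel and $\phi_{1\theta_k}(x_k)-\phi_{1\theta_k}(x_{k+1})\geq \frac{1}{2}(\eta_\gamma^+-\eta_\beta)\alpha_k\beta_k\norm{d_k}^2$, which is strictly positive since $0<\eta_\beta<\eta_\gamma^+$ and $\alpha_k,\beta_k>0$. When $\delta_k^\beta<0$ the identical computation with $\eta_\gamma^-$ in place of $\eta_\gamma^+$ gives $\frac{1}{2}(\eta_\gamma^--\eta_\beta)\alpha_k\beta_k\norm{d_k}^2$, still positive since $\eta_\gamma^-\geq 1\geq\eta_\gamma^+>\eta_\beta$; moreover $\eta_\gamma^--\eta_\beta\geq\eta_\gamma^+-\eta_\beta$, so in either case $\phi_{1\theta_k}(x_k)-\phi_{1\theta_k}(x_{k+1})\geq\frac{1}{2}(\eta_\gamma^+-\eta_\beta)\alpha_k\beta_k\norm{d_k}^2$. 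This is the descent property. For the quantitative claim, boundedness of $\lambda^k$ makes $\theta_k$ bounded (step 7), while $\alpha_k$ is bounded above and stays $\geq\alpha_0>0$ by Lemma~\ref{lem:sufficientdecrease}; then the explicit backtracking estimate~\eqref{eqn:simp-bundle-c-ls-pf-5} from the proof of Lemma~\ref{lem:line-search-merit} yields a uniform lower bound $\beta_k\geq\beta_{\min}>0$, so one may take, say, $c_\phi=\frac{1}{4}(\eta_\gamma^+-\eta_\beta)\alpha_0\beta_{\min}$.

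The main obstacle, I expect, is the second paragraph: deriving $-g_k^Td_k\geq\alpha_k\norm{d_k}^2+(\lambda^{k+1})^Tc(x_k)$ with the correct sign on the bound-constraint multipliers (which is where the invariant $0\leq x_k\leq x_u$ and complementarity slackness enter), and the bookkeeping observation that the $\delta_k^\beta\geq 0$ and $\delta_k^\beta<0$ cases must be matched respectively with the $\eta_\gamma^+$- and $\eta_\gamma^-$-parameterized line-search inequalities of Lemma~\ref{lem:line-search-alt} so that the indefinite $(\lambda^{k+1})^Tc(x_k)$ terms cancel exactly. Everything else is routine inequality chaining.
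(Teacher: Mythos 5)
Your proof is correct and follows essentially the same route as the paper: the same KKT-based bound $-g_k^Td_k \geq \alpha_k\norm{d_k}^2 + (\lambda^{k+1})^Tc(x_k)$ (using complementarity and $0\leq x_k\leq x_u$), the same pairing of the sign branches of $\delta_k^\beta$ with the $\eta_\gamma^+$- and $\eta_\gamma^-$-parameterized inequalities of Lemma~\ref{lem:line-search-alt} so the $(\lambda^{k+1})^Tc(x_k)$ terms cancel, and the same uniform lower bound on $\beta_k$ from the backtracking estimate. The only (harmless) difference is that you collapse the paper's three cases into two by relying solely on the acceptance test $\rho_k^\beta\geq 0$, whereas the paper treats $\alpha_k>2C$ separately via the upper-$C^2$ inequality; your choice of a strictly smaller $c_\phi$ correctly absorbs the resulting non-strict inequalities.
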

\begin{proof}
	For a serious step $x_{k+1}$ to be taken, step 8 and 10 give us: $\rho_k>0,\rho_k^{\beta}>0$. 
	We distinguish three cases based on the value of $\alpha_k$ and sign of $\delta_k^{\beta}$. 
	The first case is $\alpha_k>2C$. By upper-$C^2$ property in~\eqref{eqn:opt-ms-appx-rec-p1}, as shown in~\eqref{eqn:opt-ms-appx-rec-merit-beta}, we have  
\begin{equation*} 
 \centering
  \begin{aligned}
	  r(x_k) - r(x_{k+1}) >& -\beta_k g_{k}^Td_k -\frac{1}{2}\alpha_k\beta_k^2 \norm{d_k}^2. \\
  \end{aligned}
\end{equation*}
	In the second case, 
	$\alpha_k\leq 2C$ and $\delta_k^{\beta}\geq 0 $.
	From the definition of $\rho_k^{\beta}$ in~\eqref{eqn:decrease-ratio-beta},  
\begin{equation} \label{eqn:simp-bundle-merit-pf-2p}
 \centering
  \begin{aligned}
	  r(x_k) - r(x_{k+1}) > 
		  \eta_{\gamma}^+ \left[ -\beta_k g_{k}^Td_k -\frac{1}{2}\alpha_k\beta_k^2 \norm{d_k}^2\right]. \\
  \end{aligned}
\end{equation}
       The third case is when $\alpha_k\leq 2C$ and $\delta_k^{\beta}<0$, for which 
\begin{equation} \label{eqn:simp-bundle-merit-pf-2m}
 \centering
  \begin{aligned}
	  r(x_k) - r(x_{k+1}) > 
		  \eta_{\gamma}^- \left[ -\beta_k g_{k}^Td_k -\frac{1}{2}\alpha_k\beta_k^2 \norm{d_k}^2\right]. \\
  \end{aligned}
\end{equation}
		Rearranging the first equation in optimality conditions~\eqref{eqn:simp-bundle-KKT}, we have  
\begin{equation} \label{eqn:simp-bundle-KKT-beta}
	   g_k + \alpha_k d_k =  \nabla c(x_k) \lambda^{k+1} +\zeta_l^{k+1} -\zeta_u^{k+1}.
 \end{equation}
 By taking the inner product with $-d_k$ and using the last equation from~\eqref{eqn:simp-bundle-KKT}, we have 
\begin{equation} \label{eqn:simp-bundle-KKT-2}
  \centering
   \begin{aligned}
	   - g_k^T d_k-\alpha_k\norm{d_k}^2 &=- (\lambda^{k+1})^T \nabla c(x_k)^T d_k-d_k^T \zeta_l^{k+1}+d_k^T \zeta_u^{k+1}\\
			    &= (\lambda^{k+1})^T c(x_k) - (d_k - d_l^k +d_l^k)^T\zeta_l^{k+1} + (d_k-d_u^k+d_u^k)^T \zeta_u^{k+1} \\
			    &= (\lambda^{k+1})^T c(x_k) - (d_l^k)^T \zeta_l^{k+1}  +(d_u^k)^T \zeta_u^{k+1} \\
			    &= (\lambda^{k+1})^T c(x_k) + x_k^T \zeta_l^{k+1} + (x_u-x_k)^T\zeta_u^{k+1} \\
			    &\geq (\lambda^{k+1})^T c(x_k). \\
   \end{aligned}
 \end{equation}
	The third equality of~\eqref{eqn:simp-bundle-KKT-2} comes from $Z_{l}^{k+1}(d_k-d_l^k)=0$ and $Z_u^{k+1}(d_k-d_u^k)=0$ in~\eqref{eqn:simp-bundle-KKT}. 
	The inequality is obtained from bound constraints in~\eqref{eqn:simp-bundle-KKT-bound} 
	where $x_k\geq 0$, $x_u-x_k\geq 0$, $\zeta_l^{k+1}\geq 0$, and $\zeta_u^{k+1}\geq 0$
	for the current and previous iterations. 
	Next, multiplying both sides of~\eqref{eqn:simp-bundle-KKT-2} by $\beta_k$ and then subtracting $\frac{1}{2}\alpha_k\beta_k^2\norm{d_k}^2$ leads to
  \begin{equation} \label{eqn:simp-bundle-KKT-3}
       \centering
       \begin{aligned}
	       -\beta_kg_k^T d_k-\frac{1}{2}\alpha_k\beta_k^2\norm{d_k}^2 &\geq \alpha_k\beta_k\norm{d_k}^2-\frac{1}{2}\alpha_k\beta_k^2\norm{d_k}^2+\beta_k(\lambda^{k+1})^Tc(x_k)\\
		 &\geq\frac{1}{2}\alpha_k\beta_k\norm{d_k}^2+\beta_k(\lambda^{k+1})^Tc(x_k),\\
         \end{aligned}
        \end{equation}
	where the second inequality makes use of $\beta_k\in (0,1]$.
Since the left-hand side of~\eqref{eqn:simp-bundle-KKT-3} is $\delta_k^{\beta}$,  
by multiplying both sides of~\eqref{eqn:simp-bundle-KKT-3} by $\eta_{\gamma}^+$ and $\eta_{\gamma}^-$ respectively, we obtain
  \begin{align} 
	       -\eta_{\gamma}^+ \beta_kg_k^T d_k-\frac{1}{2}\eta_{\gamma}^+\alpha_k\beta_k^2\norm{d_k}^2 
		 &\geq\frac{1}{2}\eta_{\gamma}^+\alpha_k\beta_k\norm{d_k}^2+\eta_{\gamma}^+\beta_k(\lambda^{k+1})^Tc(x_k), \label{eqn:simp-bundle-KKT-4}\\
	       -\eta_{\gamma}^- \beta_kg_k^T d_k-\frac{1}{2}\eta_{\gamma}^-\alpha_k\beta_k^2\norm{d_k}^2 
		 &\geq\frac{1}{2}\eta_{\gamma}^-\alpha_k\beta_k\norm{d_k}^2+\eta_{\gamma}^-\beta_k(\lambda^{k+1})^Tc(x_k).\label{eqn:simp-bundle-KKT-5}
\end{align}
Finally, we examine the merit function $\phi_{1\theta_k}(\cdot)$. If $\alpha_k>2C$, 
we combine the inequality in~\eqref{eqn:opt-ms-appx-rec-merit-beta},~\eqref{eqn:simp-bundle-KKT-3}, and the first inequality from Lemma~\ref{lem:line-search-alt} to write
\begin{equation} \label{eqn:simp-bundle-merit-pf-3}
 \centering
  \begin{aligned}
	  \phi_{1\theta_k}(x_k) - \phi_{1\theta_k}(x_{k+1})  &= r(x_k) -r(x_{k+1})+\theta_k\norm{c(x_k)}_1-\theta_k\norm{c(x_{k+1})}_1\\
			 >& -\beta_kg_k^T d_k-\frac{1}{2}\alpha_k\beta_k^2\norm{d_k}^2+\theta_k\norm{c(x_k)}_1-\theta_k\norm{c(x_{k+1})}_1 \\
			 \geq&\frac{1}{2}\alpha_k\beta_k\norm{d_k}^2+\beta_k(\lambda^{k+1})^Tc(x_k)+\theta_k\norm{c(x_k)}_1-\theta_k\norm{c(x_{k+1})}_1\\
			 \geq&\frac{1}{2}\alpha_k\beta_k\norm{d_k}^2-\eta_{\beta}\beta_k\frac{1}{2}\alpha_k\norm{d_k}^2
			 = (1-\eta_{\beta})\frac{1}{2}\alpha_k\beta_k\norm{d_k}^2.
  \end{aligned}
\end{equation}
Similarly, if $\alpha_k\leq 2C$ and $\delta_k^{\beta}\geq 0$, by applying in order~\eqref{eqn:simp-bundle-merit-pf-2p},~\eqref{eqn:simp-bundle-KKT-4} and the second inequality from Lemma~\ref{lem:line-search-alt}, we write 
\begin{equation} \label{eqn:simp-bundle-merit-pf-4}
 \centering
  \begin{aligned}
	  \phi_{1\theta_k}(x_k) - \phi_{1\theta_k}&(x_{k+1})  = r(x_k) -r(x_{k+1})+\theta_k\norm{c(x_k)}_1-\theta_k\norm{c(x_{k+1})}_1\\
			 >& -\eta_{\gamma}^+\beta_kg_k^T d_k-\eta_{\gamma}^+\frac{1}{2}\alpha_k\beta_k^2\norm{d_k}^2+\theta_k\norm{c(x_k)}_1-\theta_k\norm{c(x_{k+1})}_1 \\
			 \geq&\frac{1}{2}\eta_{\gamma}^+\alpha_k\beta_k\norm{d_k}^2+\eta_{\gamma}^+\beta_k(\lambda^{k+1})^Tc(x_k)+\theta_k\norm{c(x_k)}_1-\theta_k\norm{c(x_{k+1})}_1\\
			 \geq&\frac{1}{2}\eta_{\gamma}^+\alpha_k\beta_k\norm{d_k}^2-\eta_{\beta}\beta_k\frac{1}{2}\alpha_k\norm{d_k}^2
			 = (\eta_{\gamma}^+-\eta_{\beta})\frac{1}{2}\alpha_k\beta_k\norm{d_k}^2.
  \end{aligned}
\end{equation}
Recall that $\eta_{\gamma}^+-\eta_{\beta}>0$ by design of Algorithm~\ref{alg:simp-bundle}. Finally, when $\delta_k^{\beta}\leq 0$, applying
in order~\eqref{eqn:simp-bundle-merit-pf-2m},~\eqref{eqn:simp-bundle-KKT-5} and the third inequality from Lemma~\ref{lem:line-search-alt}, we have 
\begin{equation} \label{eqn:simp-bundle-merit-pf-5}
 \centering
  \begin{aligned}
	  \phi_{1\theta_k}(x_k) - \phi_{1\theta_k}&(x_{k+1})  =r(x_k) -r(x_{k+1}^{\beta})+\theta_k\norm{c(x_k)}_1-\theta_k\norm{c(x_{k+1})}_1\\
			 >& -\eta_{\gamma}^-\beta_kg_k^T d_k-\eta_{\gamma}^-\frac{1}{2}\alpha_k\beta_k^2\norm{d_k}^2+\theta_k\norm{c(x_k)}_1-\theta_k\norm{c(x_{k+1})}_1 \\
			 \geq&\frac{1}{2}\eta_{\gamma}^-\alpha_k\beta_k\norm{d_k}^2+\eta_{\gamma}^-\beta_k(\lambda^{k+1})^Tc(x_k)+\theta_k\norm{c(x_k)}_1-\theta_k\norm{c(x_{k+1})}_1\\
			 \geq&\frac{1}{2}\eta_{\gamma}^-\alpha_k\beta_k\norm{d_k}^2-\eta_{\beta}\beta_k\frac{1}{2}\alpha_k\norm{d_k}^2
			 = (\eta_{\gamma}^- -\eta_{\beta})\frac{1}{2}\alpha_k\beta_k\norm{d_k}^2,
  \end{aligned}
\end{equation}
where $\eta_{\gamma}^- -\eta_{\beta}>0$.
Therefore, in all cases, a serious step $x_{k+1}=x_k+\beta_kd_k$ is a 
decreasing direction for $\phi_{1\theta_k}(\cdot)$. 
If $\lambda^{k}$ is bounded, $\theta_k$ will stay constant for $k$ large enough from step 7 of Algorithm~\ref{alg:simp-bundle}. Let $\bar{\theta}$ be the constant value so that $\theta_k \leq \bar{\theta}$ for all $k$.  
Then, by~\eqref{eqn:simp-bundle-c-ls-pf-5},
\begin{equation} \label{eqn:simp-bundle-merit-pf-6}
 \centering
  \begin{aligned}
	  \beta_k \geq \frac{1}{2} ^{\lceil \log_{\frac{1}{2}} \frac{\eta_{\beta} \alpha_0}{2 H^c_u\bar{\theta} m} \rceil}:=\bar{\beta},
  \end{aligned}
\end{equation}
due to the monotonicity of $\alpha_k$ and $\theta_k$. In other words, $\beta_k$ is bounded below by $\bar{\beta}$ for all $k$.
From~\eqref{eqn:simp-bundle-merit-pf-3},~\eqref{eqn:simp-bundle-merit-pf-4},~\eqref{eqn:simp-bundle-merit-pf-5}, $\phi_{1\theta_k}(x_k)-\phi_{1\theta_k}(x_{k+1})>(\eta_{\gamma}^+-\eta_{\beta})\frac{1}{2}\alpha_0 \bar{\beta}\norm{d_k}^2$. 
Or simply, there exists $c_{\phi}>0$ such that $\phi_{1\theta_k}(x_k)-\phi_{1\theta_k}(x_{k+1})>c_{\phi}\norm{d_k}^2$. 
\end{proof}
In order to attain bounded Lagrange multipliers, a constraint qualification is necessary 
for $c(\cdot)$ in~\eqref{eqn:opt-ms-simp}.
Given the existence of both equality and inequality constraints, we resort to LICQ~\cite{Nocedal_book}. A topic of further research will be to obtain global convergence under a weaker constraint qualification such as the Slater condition. 

\begin{lemma}\label{lem:bounded-lp}
	If LICQ of the constraints in~\eqref{eqn:opt-ms-simp} are satisfied at every accumulation points $\bar{x}$ of serious steps $\{x_k\}$ generated by the algorithm, then 
	the sequence of Lagrange multipliers for problem~\eqref{eqn:opt-ms-simp-bundle} $\{\zeta_u^{k+1}\},\{\zeta_l^{k+1}\}$ and $\{\lambda^{k+1}\}$ are bounded. 
	In addition, this means there exists $k$ such that $\theta_t = \theta_k$ for all $t\geq k$.
\end{lemma}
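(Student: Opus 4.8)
The plan is to establish the boundedness of the multipliers by a contradiction-plus-compactness argument, exploiting that LICQ holds at every accumulation point of the serious steps. First I would suppose, for contradiction, that one of the multiplier sequences is unbounded; since the multipliers $\zeta_l^{k+1},\zeta_u^{k+1},\lambda^{k+1}$ all appear together in the stationarity equation of~\eqref{eqn:simp-bundle-KKT}, unboundedness of any one of them, combined with the local boundedness of $g_k\in\bar\partial r(x_k)$ on the bounded feasible set (Section~\ref{sec:prob}), forces the total norm $M_k:=\norm{\lambda^{k+1}}+\norm{\zeta_l^{k+1}}+\norm{\zeta_u^{k+1}}\to\infty$ along a subsequence. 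Because the iterates $\{x_k\}$ lie in the compact box $[0,x_u]$, I can pass to a further subsequence along which $x_k\to\bar x$ for some feasible $\bar x$, and along which the normalized multiplier vectors $(\lambda^{k+1},\zeta_l^{k+1},\zeta_u^{k+1})/M_k$ converge to a unit-norm limit $(\bar\lambda,\bar\zeta_l,\bar\zeta_u)$.

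Next I would divide the stationarity equation $g_k+\alpha_k d_k-\nabla c(x_k)\lambda^{k+1}-\zeta_l^{k+1}+\zeta_u^{k+1}=0$ by $M_k$ and take limits. The term $g_k/M_k\to 0$ by local boundedness of the subgradients; the term $\alpha_k d_k/M_k\to 0$ because $\alpha_k$ is bounded (Lemma~\ref{lem:sufficientdecrease}) and $\norm{d_k}$ is bounded on the compact box while $M_k\to\infty$; and $\nabla c(x_k)\to\nabla c(\bar x)$ by continuity. This yields $-\nabla c(\bar x)\bar\lambda-\bar\zeta_l+\bar\zeta_u=0$. I then need to handle the sign and complementarity structure: $\bar\zeta_l,\bar\zeta_u\ge 0$ as limits of nonnegative vectors, and from the complementarity conditions~\eqref{eqn:simp-bundle-KKT-bound}, $(\zeta_l^{k+1})_i>0$ forces $(x_k+d_k)_i=0$, so in the limit $(\bar\zeta_l)_i>0$ implies $\bar x_i=0$ (using $d_k\to 0$, which follows because $\norm{d_k}\to0$ along serious steps by Lemma~\ref{lem:sufficientdecrease-merit} and the boundedness of the merit function from below on the compact box); similarly $(\bar\zeta_u)_i>0$ implies $\bar x_i=x_u^{(i)}$. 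For the equality-constraint multipliers, $\lambda_j^{k+1}\ne0$ together with $c_j(x_k)+\nabla c_j(x_k)^Td_k=0$ and $d_k\to0$ gives $c_j(\bar x)=0$ in the limit, so only multipliers of active equality constraints survive (though all equalities are typically treated as active at feasible $\bar x$ anyway).

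The relation $-\nabla c(\bar x)\bar\lambda-\bar\zeta_l+\bar\zeta_u=0$ with $(\bar\lambda,\bar\zeta_l,\bar\zeta_u)$ a nontrivial vector whose active-set structure respects $\bar x$ is precisely a nontrivial linear dependence among the gradients of the constraints active at $\bar x$ — namely the rows of $\nabla c(\bar x)$ and the unit vectors $\pm e_i$ for the active bounds — which contradicts LICQ at $\bar x$. Hence all three multiplier sequences are bounded. The last sentence then follows immediately: step~7 of Algorithm~\ref{alg:simp-bundle} sets $\theta_k=\max\{\theta_{k-1},\eta_\gamma^-\norm{\lambda^{k+1}}_\infty+\gamma\}$ (or the analogous formula with $\pi_{k-1}$), a nondecreasing sequence that is bounded above because $\norm{\lambda^{k+1}}_\infty$ is bounded; a nondecreasing bounded sequence of reals eventually stabilizes, so there exists $k$ with $\theta_t=\theta_k$ for all $t\ge k$. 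The main obstacle I anticipate is the bookkeeping needed to show $d_k\to 0$ rigorously (so that the complementarity and feasibility relations pass correctly to the limit and the active sets at $\bar x$ are identified correctly) and to ensure that the limiting linear dependence genuinely involves only constraints active at $\bar x$ — this is where the compactness of the box, the boundedness of $\alpha_k$, and the summable decrease of the merit function all need to be combined carefully.
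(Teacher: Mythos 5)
Your overall strategy is the same as the paper's: assume a multiplier sequence is unbounded, pass to a subsequence of serious steps converging to an accumulation point $\bar x$, and extract from the stationarity equation of~\eqref{eqn:simp-bundle-KKT} a nontrivial vanishing linear combination of $\{\nabla c_j(\bar x)\}$ and the unit vectors of the active bounds, contradicting LICQ. The paper does this by merging $\zeta_l^{k+1}$ and $\zeta_u^{k+1}$ into one vector supported on the subproblem's active bound set $I$ and arguing that unboundedness of any multiplier would force the (bounded) left-hand side $g_k+\alpha_k d_k$ to blow up; your normalization by $M_k$ and passage to a unit-norm limit multiplier is the standard, cleaner rendering of that same step, and your treatment of the $\theta_k$ stabilization at the end matches the paper's.

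There is, however, one concrete circularity: you invoke $\norm{d_k}\to 0$ (via Lemma~\ref{lem:sufficientdecrease-merit}) to show that the support of $(\bar\zeta_l,\bar\zeta_u)$ lies in the set of bounds active at $\bar x$. But the quantified decrease $\phi_{1\theta_k}(x_k)-\phi_{1\theta_k}(x_{k+1})>c_\phi\norm{d_k}^2$, the uniform lower bound on $\beta_k$, and the eventual constancy of $\theta_k$ that make the telescoping argument work all presuppose bounded $\lambda^{k}$ --- precisely what this lemma is supposed to establish. Without bounded multipliers the penalty parameter in the merit function may keep increasing and $d_k\to 0$ is not yet available. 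To be fair, the paper's own proof never reconciles the subproblem's active set $I$ with the bounds active at $\bar x$; it simply asserts linear independence of $\{\nabla c_j(\bar x)\}\cup\{e_i\}_{i\in I}$ from LICQ at $\bar x$, so your extra care exposes a subtlety the paper glosses over --- but the particular patch you chose is not usable at this stage. A non-circular route is to pass to a further subsequence along which the active index set is constant (only $d_k$ \emph{bounded} is needed, which the box constraints give for free) and apply the linear-independence assertion directly, as the paper does.
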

\begin{proof}
     We rewrite the first equation in optimality condition in~\eqref{eqn:simp-bundle-KKT} as
    \begin{equation} \label{eqn:simp-bundle-KKT-full}
     \centering
     \begin{aligned}
	     g_k + \alpha_k d_k - \sum_{j=1}^m \lambda^{k+1}_j \nabla c_j(x_k) -\sum_{i=1}^n (\zeta_l^{k+1})_i e_i 
	     +\sum_{i=1}^n (\zeta_u^{k+1})_i e_i &=0,\\
     \end{aligned}
    \end{equation}
    where $e_i \in \Rbb^n$ is a vector such that $(e_i)_i = 1$ and $(e_i)_l = 0,l\neq i$. 
    The Lagrange multipliers for the bound constraints can be combined into one vector $\zeta^{k+1}=\zeta_l^{k+1}- \zeta_u^{k+1}$,
    since $(\zeta_l^{k+1})_i(\zeta_u^{k+1})_i = 0$.
	A component of $\zeta_l^{k+1}$ or $\zeta_u^{k+1}$ is unbounded if and only if the corresponding component in $\zeta^{k+1}$ is unbounded.
	Let $I$ be the index set of the active bound constraints, we have
    \begin{equation} \label{eqn:simp-bundle-KKT-full-2}
     \centering
     \begin{aligned}
	     g_k + \alpha_k d_k = \sum_{j=1}^m \lambda^{k+1}_j \nabla c_j(x_k) +\sum_{i \in I} (\zeta^{k+1})_i e_i.\\
     \end{aligned}
    \end{equation}
	Since $\{x_k\},\{g_k\}$ are bounded ($r$ being Lipschitz on a bounded domain) and $\{\alpha_k\}$ is finite by Lemma~\ref{lem:sufficientdecrease}, 
	the left-hand side of the equation stays bounded as $k\to\infty$. 
	From LICQ at $\bar{x}$, $\nabla c_j(\bar{x}) \in \Rbb^n $ and $e_i,i \in I$ are linearly independent and bounded vectors. 
	Without losing generality, suppose $\lambda^{k+1}_j,j\in[1,m]$ is not 
	bounded as $k\to\infty$. Then, $\norm{\lambda^{k}}_{\infty} \to \infty$ and we can construct an unbounded subsequence $\{\lambda^{k_u}\}$ that is monotonic in $\lambda^{k_u}_j$. The corresponding subsequences $\{x_{k_u}\}$, $\{g_{k_u}\}$, $\{\alpha_{k_u}\}$ remain bounded. Passing on further to a subsequence if necessary, in which case the subsequence of $\{\lambda^{k_u}\}$ is still unbounded, we can assume $x_{k_u}\to\bar{x}$ as $k\to\infty$, where $\bar{x}$ is an accumulation point. Regardless of the behavior of $\{\zeta^{k_u}\}$, the right-hand side of~\eqref{eqn:simp-bundle-KKT-full-2} will be unbounded due to linear independence at $\bar{x}$ among the vectors.
	This is a contradiction. Same process can be repeated for $\zeta^{k+1}_j,j\in[1,m]$. 
	Therefore, there exist $\lambda^U\geq 0$, $\zeta_l^U\geq 0$, $\zeta_u^U\geq 0$  such that 
	$\norm{\lambda^{k}}_{\infty} \leq \lambda^U$, $\zeta^{k}_u \leq \zeta_u^U$ and $\zeta^{k}_l \leq \zeta_l^U$ for all $k$.
	Since $\theta_k$ is determined by $\lambda^k$, there exists $k$ such that $\theta_t=\theta_k$ for all $t\geq k$. 
\end{proof}

\begin{theorem}\label{thm:simp-KKT}
	Given the Assumptions~\ref{assp:upperC2} and~\ref{assp:boundedHc}, if the constraints in~\eqref{eqn:opt-ms-simp} satisfy the 
	conditions in Lemma~\ref{lem:bounded-lp}, then every accumulation point $\bar{x}$ of the serious steps $\{x_k\}$ generated by Algorithm~\ref{alg:simp-bundle} 
	is a KKT point of the problem~\eqref{eqn:opt-ms-simp}. 
	That is, there exists a subsequence $\{x_{k_s}\}$ of $\{x_k\}$, where $ x_{k_s} \to \bar{x}$, and $\bar{\lambda}\in\Rbb^m$, $\bar{\zeta}_u\in\Rbb^n$, $\bar{\zeta}_l\in\Rbb^n$, so that the first-order optimality conditions~\eqref{eqn:opt-ms-simp-KKT} are satisfied at $\bar{x}$.
\end{theorem}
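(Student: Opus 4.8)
The plan is to assemble the preceding lemmas into a two-stage limiting argument: first drive $\norm{d_k}\to 0$ along the sequence of serious steps, and then pass to the limit in the subproblem optimality conditions \eqref{eqn:simp-bundle-KKT}--\eqref{eqn:simp-bundle-KKT-bound} along a subsequence converging to $\bar x$. First I would invoke Lemma~\ref{lem:bounded-lp}: under the LICQ hypothesis the multiplier sequences $\{\lambda^{k+1}\}$, $\{\zeta_l^{k+1}\}$, $\{\zeta_u^{k+1}\}$ are bounded, and there is an index $K_1$ and a constant $\bar\theta$ with $\theta_k=\bar\theta$ for all $k\geq K_1$. By Lemma~\ref{lem:sufficientdecrease} only finitely many rejected steps occur and $\alpha_k$ equals a constant $\bar\alpha$ for all $k\geq K_2$; hence for $k\geq K:=\max\{K_1,K_2\}$ every step is a serious step, $\theta_k\equiv\bar\theta$, and $\alpha_k\equiv\bar\alpha$ (the case of finite termination having already been treated before the theorem, so that an infinite sequence of serious steps is available, with accumulation points guaranteed by compactness of the box $0\leq x\leq x_u$). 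Since $r$ is Lipschitz and $\theta_k\norm{c(x)}_1\geq 0$, the merit function satisfies $\phi_{1\theta_k}(x)\geq\min_{0\leq y\leq x_u} r(y)$ uniformly in $k$, so it is bounded below on the box. Because the multipliers are bounded, Lemma~\ref{lem:sufficientdecrease-merit} applies with the constant value $\bar\theta$ and gives $\phi_{1\bar\theta}(x_k)-\phi_{1\bar\theta}(x_{k+1})>c_\phi\norm{d_k}^2$ for all $k\geq K$; telescoping and using the lower bound yields $\sum_{k\geq K}\norm{d_k}^2<\infty$, and in particular $\norm{d_k}\to 0$.

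Next I would fix an accumulation point $\bar x$ of $\{x_k\}$ and a subsequence $x_{k_s}\to\bar x$. Along it, $g_{k_s}\in\bar{\partial} r(x_{k_s})$ is bounded by local boundedness of the Clarke subdifferential of a Lipschitz function, and $\lambda^{k_s+1}$, $\zeta_l^{k_s+1}$, $\zeta_u^{k_s+1}$ are bounded; passing to a further subsequence I may assume $g_{k_s}\to\bar g$, $\lambda^{k_s+1}\to\bar\lambda$, $\zeta_l^{k_s+1}\to\bar\zeta_l\geq 0$, $\zeta_u^{k_s+1}\to\bar\zeta_u\geq 0$, while $\alpha_{k_s}\to\bar\alpha$ and $d_{k_s}\to 0$. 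Outer semicontinuity of $x\mapsto\bar{\partial} r(x)$ then gives $\bar g\in\bar{\partial} r(\bar x)$. Passing to the limit in the first equation of \eqref{eqn:simp-bundle-KKT}, using $\alpha_{k_s}d_{k_s}\to 0$ and continuity of $\nabla c$, yields $0=\bar g-\nabla c(\bar x)\bar\lambda-\bar\zeta_l+\bar\zeta_u$, hence $0\in\bar{\partial} r(\bar x)+\nabla c(\bar x)\bar\lambda-\bar\zeta_l+\bar\zeta_u$ after replacing $\bar\lambda$ by $-\bar\lambda$ (the equality-constraint multiplier is sign-free), matching the first line of \eqref{eqn:opt-ms-simp-KKT}. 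The equality $c(x_{k_s})+\nabla c(x_{k_s})^Td_{k_s}=0$ passes to $c(\bar x)=0$, so $\bar x$ is feasible for $c$ and $\bar\lambda_jc_j(\bar x)=0$ holds trivially; and passing to the limit in \eqref{eqn:simp-bundle-KKT-bound} gives $\bar Z_u(\bar x-x_u)=0$, $\bar Z_l\bar x=0$, $0\leq\bar x\leq x_u$, and $\bar\zeta_l,\bar\zeta_u\geq 0$. Collecting these, all of \eqref{eqn:opt-ms-simp-KKT} holds at $\bar x$, i.e., $\bar x$ is a KKT point of \eqref{eqn:opt-ms-simp}.

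I expect the main obstacle to be setting up the two ingredients that make the limit legitimate: (i) proving $\norm{d_k}\to 0$, which relies on first stabilizing both the penalty parameter to a single $\bar\theta$ (so that the merit values genuinely telescope) and the quadratic coefficient to a single $\bar\alpha$ — supplied by Lemmas~\ref{lem:sufficientdecrease} and~\ref{lem:bounded-lp} — and on the merit function being bounded below over the compact box; and (ii) justifying $\bar g\in\bar{\partial} r(\bar x)$ through outer semicontinuity of the Clarke subdifferential, which is precisely what lets the nonsmooth inclusion survive the limit. Once $\norm{d_k}\to 0$ and the bounded subsequences are extracted, the remaining passages to the limit are mechanical continuity arguments.
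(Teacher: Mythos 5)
Your proposal is correct and follows essentially the same route as the paper's proof: stabilize $\alpha_k$ and $\theta_k$ via Lemmas~\ref{lem:sufficientdecrease} and~\ref{lem:bounded-lp}, use the merit-function decrease of Lemma~\ref{lem:sufficientdecrease-merit} together with boundedness below to force $\norm{d_k}\to 0$, and then pass to the limit in~\eqref{eqn:simp-bundle-KKT}--\eqref{eqn:simp-bundle-KKT-bound} along a subsequence, using outer semicontinuity of the Clarke subdifferential for $\bar g\in\bar{\partial}r(\bar x)$. Your explicit telescoping to $\sum_k\norm{d_k}^2<\infty$ and the remark about the sign of the (sign-free) equality multiplier $\bar\lambda$ are minor presentational refinements of the same argument.
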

\begin{proof}
	By Lemma~\ref{lem:sufficientdecrease}, there exists $k_0>0$ such that for all $t\geq k_0$, $\alpha_t=\alpha_{k_0}=\bar{\alpha}$ and 
	all steps are serious steps.
	By Lemma~\ref{lem:bounded-lp}, there exists $k_1>0$ such that for $t\geq k_1$, the Lagrange multipliers are bounded above and $\theta_t=\theta_{k_1}=\bar{\theta}$. 
	We say $k$ is large enough if $k\geq \max{(k_0,k_1)}$, in which case the parameters of the algorithm stabilizes at $\alpha_t=\bar{\alpha}$ and $\theta_t=\bar{\theta}$ for $t\geq k$.

	Since the domain of $x$ is bounded and $r(\cdot)$ is Lipschitz, the serious steps sequence $\{x_k\}$ 
	as well as the subgradient sequence $\{g_{k}\}$ are bounded. 
	Therefore, there exists at least one accumulation point for $\{x_k\}$.
	Let $\bar{x}$ be an accumulation point of $\{x_k\}$ and $\{x_{k_s}\}$ be a subsequence of $\{x_k\}$ such that $x_{k_s}\to \bar{x}$.
	From Lemma~\ref{lem:line-search-merit}, line search terminates successfully and  
	by Lemma~\ref{lem:sufficientdecrease-merit}, for $k$ large enough, $\{\phi_{1\theta_k}(x_k)\}$ is a decreasing and bounded sequence 
	with a fixed parameter $\bar{\theta}$. 
	Thus, $\phi_{1\theta_k}(x_k)$ converges. 
	From Lemma~\ref{lem:sufficientdecrease-merit},
	we know that $\phi_{1\theta_k}(x_k)-\phi_{1\theta_k}(x_{k+1})$ is 
	bounded below in the order of $\norm{d_k}^2$. Therefore,  $\lim_{k\to\infty} \norm{d_k}\to 0$. In particular, $\lim_{s\to \infty}\norm{d_{k_s}}\to 0$.
	By the last equation in~\eqref{eqn:simp-bundle-KKT}, $c(x_{k_s})\to 0$. Thus, $\bar{x}$ satisfies the equality constraints $c(\bar{x})=0$.
        Given that the bound constraints are satisfied by all $x_k$, we have $0\leq \bar{x}\leq x_u$. 
          
	Passing on to a subsequence if necessary, we let $g_{k_s}\to \bar{g}$, 
	$\lambda^{k_{s}+1} \to \bar{\lambda}$, $\zeta_u^{k_{s}+1} \to \bar{\zeta}_u$, $\zeta_l^{k_{s}+1} \to \bar{\zeta}_l$. 
        From the first equation in the optimality conditions~\eqref{eqn:simp-bundle-KKT}, we have 
        \begin{equation} \label{eqn:simp-bundle-KKT-limit-1}
          \centering
          \begin{aligned}
		  0 = \bar{g} - \nabla c(\bar{x}) \bar{\lambda}  -\bar{\zeta}_l +\bar{\zeta}_u. 
          \end{aligned}
        \end{equation}
	By the outer semicontinuity of Clarke subdifferential, with $g_{k_s}\in \bar{\partial} r(x_{k_s})$, we have $\bar{g} \in \bar{\partial} r(\bar{x})$.
        As a result, 
	  $0 \in \bar{\partial} r(\bar{x}) -\nabla c(\bar{x}) \bar{\lambda} -\bar{\zeta}_l +\bar{\zeta}_u$.
	The complementarity conditions of bound constraints from~\eqref{eqn:simp-bundle-KKT} lead 
	to $\bar{Z}_u(\bar{x}-x_u)$, $\bar{Z}_l\bar{x}=0$. Together with the 
	equality constraints $c(\bar{x}) = 0$, the first-order necessary 
	optimality conditions~\eqref{eqn:opt-ms-simp-KKT} of problem~\eqref{eqn:opt-ms-simp} at $\bar{x}$ are satisfied.
\end{proof}

While the line search is only conducted on the less computationally expensive and analytically known 
function $c$, it is 
possible to avoid it altogether. For example, the bundle method in~\cite{hare2015} does not need a line search when the feasible set for $x$ is convex.
%
Similar results stand for the simplified bundle algorithm.  
\begin{proposition}\label{prop:convex-constraint}
	If the equality and bound constraints in~\eqref{eqn:opt-ms-simp} form a convex and bounded set in $\Rbb^n$, 
	then instead of~\eqref{eqn:opt-ms-simp-bundle} we solve subproblem
        \begin{equation} \label{eqn:opt-ms-simp-bundle-2}
        \centering
         \begin{aligned}
          &\underset{\substack{x}}{\text{minimize}} 
	  & & \phi_k(x)\\
          &\text{subject to}
	  & & c(x) = 0, ~0 \leq x\leq x_u.
  \end{aligned}
\end{equation}
	In this case, the line search step can be replaced with $x_{k+1} = x_k +d_k$, where $d_k$ is the solution to~\eqref{eqn:opt-ms-simp-bundle-2}
	and the global convergence results of this section still apply.
\end{proposition}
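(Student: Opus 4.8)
The plan is to exploit the fact that convexity of the feasible set eliminates every source of infeasibility along the iteration, so that the $l_1$ merit function~\eqref{eqn:opt-ms-simp-bundle-merit} collapses onto $r(\cdot)$ and the constraint line search becomes unnecessary; the proof then amounts to a streamlined rerun of Lemmas~\ref{lem:sufficientdecrease}--\ref{lem:bounded-lp} and Theorem~\ref{thm:simp-KKT}. First I would note that $\phi_k(\cdot)$ in~\eqref{eqn:opt-rc-appx-x} is strongly convex (since $\alpha_k>0$) and that the feasible set of~\eqref{eqn:opt-ms-simp-bundle-2} is convex, compact and contains the current iterate $x_k$ (taking $x_0$ feasible), so~\eqref{eqn:opt-ms-simp-bundle-2} has a unique minimizer $x_{k+1}=x_k+d_k$ which is itself feasible; in particular there is no linearization inconsistency to repair, and $c(x_k)=0$, $0\le x_k\le x_u$ for every $k$. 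Consequently $\phi_{1\theta_k}(x_k)=r(x_k)$ along the whole sequence, Lemmas~\ref{lem:line-search-merit} and~\ref{lem:line-search-alt} are vacuous, and the modified algorithm simply sets $x_{k+1}=x_k+d_k$ whenever the trial step is accepted, i.e.\ only the $\rho_k$ test of step~8 survives, with $\beta_k\equiv 1$.

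Second I would redo Lemma~\ref{lem:sufficientdecrease} with $\beta_k=1$: the upper-$C^2$ estimate~\eqref{eqn:opt-ms-appx-rec-p1} still holds on the bounded domain with the same constant $C$, and as in~\eqref{eqn:opt-ms-appx-rec-obj} one obtains $r_k-r(x_k+d_k)>\delta_k$ whenever $\alpha_k>2C$, which together with $\eta_l^+\le 1$ forces $\rho_k>0$; since $\alpha_k$ is inflated by the fixed factor $\eta_\alpha>1$ at each rejection, only finitely many rejected steps occur and $\alpha_k$ settles at some $\bar\alpha$. The one genuinely new ingredient---and the step I expect to carry the weight of the argument---is the substitute for Lemma~\ref{lem:sufficientdecrease-merit}. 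Because $x_{k+1}$ minimizes the strongly convex $\phi_k$ over a convex set containing $x_k$, the variational inequality $\nabla\phi_k(x_{k+1})^T(x_k-x_{k+1})\ge 0$ holds, i.e.\ $(g_k+\alpha_k d_k)^T d_k\le 0$, hence $-g_k^T d_k\ge \alpha_k\norm{d_k}^2$; this is the role played by~\eqref{eqn:simp-bundle-KKT-2} in the linearized case, but it is now cleaner precisely because $c(x_k)\equiv 0$. It follows that $\delta_k=-g_k^T d_k-\frac{1}{2}\alpha_k\norm{d_k}^2\ge\frac{1}{2}\alpha_k\norm{d_k}^2>0$, so $\delta_k$ is always strictly positive, the $\delta_k\ge 0$ branch of~\eqref{eqn:decrease-ratio-1} always applies, and an accepted step satisfies $r(x_k)-r(x_{k+1})>\eta_l^+\delta_k\ge\frac{1}{2}\eta_l^+\alpha_0\norm{d_k}^2=:c_\phi\norm{d_k}^2$.

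Finally I would conclude exactly as in Theorem~\ref{thm:simp-KKT}. The sequence $\{r(x_k)\}$ is nonincreasing and bounded below ($r$ is Lipschitz on a compact feasible set), hence convergent, so the telescoping sum $\sum_k(r(x_k)-r(x_{k+1}))$ is finite and therefore $\norm{d_k}\to 0$. Under LICQ at every accumulation point of $\{x_k\}$, the first-order conditions of~\eqref{eqn:opt-ms-simp-bundle-2}, whose term $g_k+\alpha_k d_k$ stays bounded, yield bounded multipliers $\lambda^{k+1},\zeta_l^{k+1},\zeta_u^{k+1}$ by the argument of Lemma~\ref{lem:bounded-lp}; passing to a subsequence $x_{k_s}\to\bar x$ with $g_{k_s}\to\bar g$ and convergent multipliers, outer semicontinuity of the Clarke subdifferential gives $\bar g\in\bar{\partial} r(\bar x)$, hence $0\in\bar{\partial} r(\bar x)-\nabla c(\bar x)\bar\lambda-\bar\zeta_l+\bar\zeta_u$, while $c(\bar x)=0$, $0\le\bar x\le x_u$ and the limiting complementarity relations $\bar Z_l\bar x=0$, $\bar Z_u(\bar x-x_u)=0$ are immediate from the feasibility of every $x_k$ and the subproblem conditions; thus $\bar x$ satisfies~\eqref{eqn:opt-ms-simp-KKT}. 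The only real difficulty is controlling the sign and magnitude of $\delta_k$ in the absence of a constraint line search, and as indicated this turns out to be easier here than in the general case because every iterate is feasible.
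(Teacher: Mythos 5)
Your proposal is correct and follows the route the paper intends but leaves unwritten (the paper states Proposition~\ref{prop:convex-constraint} without proof, remarking only that "similar results stand"): you rerun Lemma~\ref{lem:sufficientdecrease} to stabilize $\alpha_k$, replace the merit-function descent of Lemma~\ref{lem:sufficientdecrease-merit} by a direct descent estimate on $r$, and conclude as in Theorem~\ref{thm:simp-KKT}. Your key step, the variational inequality $(g_k+\alpha_k d_k)^T d_k\le 0$ giving $\delta_k\ge\tfrac{1}{2}\alpha_k\norm{d_k}^2$, is exactly what the paper's inequality~\eqref{eqn:simp-bundle-KKT-2} collapses to once $c(x_k)=0$, so the two arguments coincide in substance. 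The only cosmetic caveat is that feasibility of $x_k$ (needed for the variational inequality) holds for all $k\ge 1$ automatically, since each $x_{k+1}$ solves~\eqref{eqn:opt-ms-simp-bundle-2}; only the initial iterate requires the assumption, and an infeasible $x_0$ affects at most the sign of $\delta_0$.
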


The algorithm and convergence analysis can be readily extended to two-stage stochastic programming problems in the form of 
~\eqref{eqn:opt0}, 
where the quadratic approximation function $\phi_k(\cdot)$ is needed only for second-stage solution function $r$. Details of the setup and additional update rules for $\alpha_k$
can be found in the report~\cite{wang2022}.

\subsection{Consistency restoration of linearized constraints}\label{sec:lincons}
As mentioned previously, the linearized constraints of the subproblem~\eqref{eqn:opt-ms-simp-bundle} can become infeasible even when the original problem~\eqref{eqn:opt-ms-simp} is feasible, a phenomenon known as inconsistency. To address this possibility, this section introduces and analyzes a consistency restoration algorithm. Whenever the subproblem~\eqref{eqn:opt-ms-simp-bundle} has inconsistent linearized constraints, a penalty subproblem (see~\eqref{eqn:opt-ms-simp-bundle-penal} below) that incorporates a measure of constraints' violation in the objective is solved. The salient idea is to generate a new serious point with \textit{consistent} linearized constraints. 
For problems satisfying constraint qualification such as LICQ, the penalty problem with a well-designed penalty parameter could generate feasible accumulation points.
For problems that do not, it is possible to minimize the linearized constraint violation for infeasible accumulation points through the update rule of the penalty parameter.
For example, one can use the approach in~\cite{byrd2005} that requires solving a so-called feasibility problem.
%
%
%
We first formulate the following penalty subproblem :
  \begin{align}
   &\underset{\substack{d}}{\text{min}} 
	  & &  \Phi_k(d) + \pi_k \norm{c(x_k)+\nabla c(x_k)^T d}_1\\ 
	  & \text{s.t.}
	  & & d_l^k\leq d\leq d_u^k, \label{eqn:opt-ms-simp-bundle-penal-nonsmooth}
  \end{align}
where $\pi_k > 0$ is the penalty parameter. 
 To avoid difficulties with the nonsmooth 1-norm, the algorithm solves the following equivalent quadratic programming problem:
\begin{equation} \label{eqn:opt-ms-simp-bundle-penal}
 \centering
  \begin{aligned}
   &\underset{\substack{d,v,w}}{\text{min}} 
	  & & \Phi_k(d) +\pi_k \sum_{j=1}^m (v_j + w_j)  \\
   &\text{s.t.}
	  & & c_j(x_k)+\nabla c_j(x_k)^T d = v_j - w_j, \ j=1,\ldots,m,\\
	  &&& d_l^k\leq d\leq d_u^k, \ \ 0 \leq v, w ,
  \end{aligned}
\end{equation}
where $v,w\in \Rbb^m$ are slack variables. Let $d_k$, $v^k$, and $w^k$ denote the solutions to~\eqref{eqn:opt-ms-simp-bundle-penal}.
The first-order optimality conditions of problem~\eqref{eqn:opt-ms-simp-bundle-penal} for $d$ are
\begin{equation} \label{eqn:simp-bundle-penal-KKT-1}
  \centering
   \begin{aligned}
	   g_k + \alpha_k d_k + \sum_{i=1}^m \lambda^{k+1}_j \nabla c_j(x_k)-\zeta_l^{k+1}+\zeta_u^{k+1} =0&,\\
	   \lambda^{k+1}_j \left[c_j(x_k)+\nabla c_j(x_k)^T d_k - v^k_j + w^k_j \right] =0, \ j=1,...m,\\
	   c_j(x_k)+\nabla c_j(x_k)^T d_k - v^k_j + w^k_j =0, \ j=1,...m,\\
	   Z_u^{k+1}(d_k-d_u^k) = 0, Z_l^{k+1}(d_k-d_l^k) = 0&,\\
	   \zeta_u^{k+1},\zeta_l^{k+1},d_k+x_k,x_u-x_k-d_k\geq 0&.\\
   \end{aligned}
 \end{equation}
Here, $\lambda^{k+1}\in\Rbb^m$, $\zeta_u^{k+1}\in \Rbb^n$, and $\zeta_l^{k+1}\in \Rbb^n$ are the Lagrange multipliers for the constraints, while $Z_u^{k+1}$ and $Z_l^{k+1}$ are diagonal matrices with values $\zeta_u^{k+1}$ and $\zeta_l^{k+1}$, respectively.
The remaining optimality conditions on slack variables $v$ and $w$ are 
\begin{equation} \label{eqn:simp-bundle-penal-KKT-2}
  \centering
   \begin{aligned}
	   \pi_k-\lambda^{k+1}_j - p^{k+1}_j =0, \ j=1,...m,\\
           \pi_k+\lambda^{k+1}_j-q^{k+1}_j=0, \ j=1,...m,\\
	   P^{k+1} v^k  = 0, Q^{k+1} w^k = 0&,\\
	   v^k,w^k,p^{k+1},q^{k+1} \geq 0&,\\
   \end{aligned}
 \end{equation}
where $p^{k+1}\in\Rbb^m$ and $q^{k+1}\in\Rbb^m$ are the Lagrange multipliers for bound constraints on $v^k$ and $w^k$, respectively, and $P^{k+1}$ and $Q^{k+1}$ are diagonal matrices whose diagonal entries are elements of $p^{k+1}$ and $q^{k+1}$, respectively. 
Based on whether the slack variable bound constraints are active, the relations among the multipliers 
can be simplified. 
To proceed, we define the sign function $\sigma_j^k:\Rbb^n\to\Rbb$, $j=1,\dots,m$ of $d$ by
\begin{equation} \label{eqn:opt-ms-simp-sign}
 \centering
  \begin{aligned}
	  \sigma_j^k(d)  =  
	  \begin{cases}
		  -1,  
		       & c_j(x_k)+\nabla c_j(x_k)^T d < 0, \\  
		  \phantom{-}0,
		  & c_j(x_k) + \nabla c_j(x_k)^T d = 0,\\
		  \phantom{-}1,
		  & c_j(x_k) + \nabla c_j(x_k)^T d > 0.
    \end{cases}
  \end{aligned}
\end{equation}
In addition, for $d_k$, we define
the active equality constraint set as
\begin{equation} \label{eqn:opt-ms-simp-bundle-fea-A}
   \centering
    \begin{aligned}
     A_k = \{ 1\leq j\leq m|c_j(x_k)+\nabla c_j(x_k)^T d_{k} = 0\},
     \end{aligned}
\end{equation}
and the inactive equality constraint set as
\begin{equation} \label{eqn:opt-ms-simp-bundle-fea-I}
  \centering
    \begin{aligned}
      V_k = \{ 1\leq j\leq m|c_j(x_k)+\nabla c_j(x_k)^T d_{k} \neq 0\}.
    \end{aligned}
\end{equation}
  The optimality conditions~\eqref{eqn:simp-bundle-penal-KKT-2} are integrated into~\eqref{eqn:simp-bundle-penal-KKT-1}
in the following lemma.

\begin{lemma}\label{lem:lambda-relaxed-prop}
	For all $j\in V_k$, $\lambda_j^{k+1} = \sigma_j^k(d_k)\pi_k$. Also, for all $j\in A_k$, 
        $\lambda_j^{k+1}\in [-\pi_k,\pi_k]$.
\end{lemma}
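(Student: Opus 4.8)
The plan is to read both claims essentially directly off the optimality conditions~\eqref{eqn:simp-bundle-penal-KKT-2} for the slack variables $v$ and $w$, using only the complementarity relations and the sign constraints on the multipliers; no use of the upper-$C^2$ structure or of Assumption~\ref{assp:boundedHc} is needed here.

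First I would dispose of the bound $\lambda_j^{k+1}\in[-\pi_k,\pi_k]$, which in fact holds for \emph{every} $j$ and in particular for $j\in A_k$. The first two equations of~\eqref{eqn:simp-bundle-penal-KKT-2} give $p_j^{k+1}=\pi_k-\lambda_j^{k+1}$ and $q_j^{k+1}=\pi_k+\lambda_j^{k+1}$; since $p^{k+1},q^{k+1}\geq 0$, this immediately forces $-\pi_k\leq\lambda_j^{k+1}\leq\pi_k$. This settles the second assertion of the lemma.

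Next, for $j\in V_k$ I would show that at most one of $v_j^k,w_j^k$ is strictly positive, purely from the KKT system: if both were positive, the complementarity conditions $P^{k+1}v^k=0$ and $Q^{k+1}w^k=0$ would force $p_j^{k+1}=q_j^{k+1}=0$, whence $\lambda_j^{k+1}=\pi_k$ and $\lambda_j^{k+1}=-\pi_k$ simultaneously, contradicting $\pi_k>0$. (Alternatively one can argue this from optimality of $(d_k,v^k,w^k)$ in~\eqref{eqn:opt-ms-simp-bundle-penal} by the exchange $(v_j^k,w_j^k)\mapsto(v_j^k-t,w_j^k-t)$ with $t=\min(v_j^k,w_j^k)$, which preserves feasibility and strictly lowers the objective by $2\pi_k t$.) Since $j\in V_k$ means the residual $c_j(x_k)+\nabla c_j(x_k)^Td_k=v_j^k-w_j^k$ is nonzero, exactly one of the two slacks is then positive, and its sign matches $\sigma_j^k(d_k)$ from~\eqref{eqn:opt-ms-simp-sign}: if $\sigma_j^k(d_k)=1$ then $v_j^k>0$, so $p_j^{k+1}=0$ and hence $\lambda_j^{k+1}=\pi_k=\sigma_j^k(d_k)\pi_k$; if $\sigma_j^k(d_k)=-1$ then $w_j^k>0$, so $q_j^{k+1}=0$ and hence $\lambda_j^{k+1}=-\pi_k=\sigma_j^k(d_k)\pi_k$. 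Because $j\in V_k$ excludes the value $\sigma_j^k(d_k)=0$, these two cases are exhaustive, which proves $\lambda_j^{k+1}=\sigma_j^k(d_k)\pi_k$ on $V_k$.

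I do not anticipate a genuine obstacle: the argument is a short bookkeeping exercise in the KKT conditions. The only point that deserves a word of care is establishing that $v_j^k$ and $w_j^k$ are not both positive; I would present the KKT-based contradiction above as the primary argument since it needs nothing beyond~\eqref{eqn:simp-bundle-penal-KKT-2}, and mention the optimality/exchange argument only as an aside.
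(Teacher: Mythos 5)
Your proof is correct and follows essentially the same route as the paper: both read the claims directly off the slack-variable optimality conditions~\eqref{eqn:simp-bundle-penal-KKT-2}. Two minor differences, both to your credit: you derive the bound $\lambda_j^{k+1}\in[-\pi_k,\pi_k]$ directly from $p_j^{k+1},q_j^{k+1}\geq 0$ (the paper takes a slightly longer sum-and-difference detour, and only states it for $j\in A_k$), and you actually justify the complementarity $v_j^k w_j^k=0$ via the KKT contradiction, which the paper merely asserts.
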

\begin{proof}
	Note first that for any $ j\in[1,m]$, the slack variable solutions satisfy $v^k_j w^k_j = 0$. 
	We consider the three cases given by the value of 
	$\sigma_j^k(d_k),j=1,\dots,m$.
	If $\sigma_j^k(d_k)=1$, by the third equation in~\eqref{eqn:simp-bundle-penal-KKT-1}, $v_j^k>0$ and $w_j^k=0$.
	From~\eqref{eqn:simp-bundle-penal-KKT-2}, 
        $p^{k+1}_j=0$. Then, by the first equation in~\eqref{eqn:simp-bundle-penal-KKT-2}, $\lambda_j^{k+1}=\pi_k$.
	Similarly, if $\sigma_j^k(d_k)=-1$, 
        one obtains $q^{k+1}_j=0$ and $\lambda_j^{k+1}=-\pi_k$. The first part of the Lemma is proven.

	In the last case, $j\in A_k$, \textit{i.e.}, $\sigma_j^k(d_k)=0$. By the third equation in~\eqref{eqn:simp-bundle-penal-KKT-1}, we have $v_j^k=0$ and $w_j^k=0$.  
We sum and subtract the first two equations in~\eqref{eqn:simp-bundle-penal-KKT-2} to obtain 
		$\lambda_j^{k+1} = \frac{1}{2}(q_j^{k+1}-p_j^{k+1})$ and 
		$p_j^{k+1}+q_j^{k+1}=2\pi_k$. 
	The bound constraints on $p^{k+1}$ and $q^{k+1}$ and the latter equation implies $0\leq p_j^{k+1}$ and $q_j^{k+1}\leq 2\pi_k$; finally,
	the former equation implies $\lambda_j^{k+1}\in [-\pi_k,\pi_k]$.
\end{proof}

Similar to~\eqref{def:pd}, we define $\delta_k^{\pi_k}$ to be the predicted change in objective of the penalty subproblem~\eqref{eqn:opt-ms-simp-bundle-penal} with penalty $\pi_k$, which based on~\eqref{eqn:opt-ms-simp-bundle-penal-nonsmooth} is 
\begin{equation}\label{def:pd-penal}
   \begin{aligned}
	   \delta_k^{\pi_k} =& -g_k^T d_k -\frac{1}{2}\alpha_k \norm{d_k}^2 +\pi_k \norm{c(x_k)}_1 -\pi_k \norm{c(x_k)+\nabla c(x_k)^T d_k}_1.\\
   \end{aligned}
\end{equation}
Notice that $\delta_k^{\pi_k}\geq 0$. 
The ratios $\rho_k$~\eqref{eqn:decrease-ratio-1} and $\rho_k^{\beta}$~\eqref{eqn:decrease-ratio-beta} are again used to address the nonsmoothness of $r$. 
The algorithm also requires line search on the constraints to obtain a serious step 
$x_{k+1}=x_k+\beta_k d_k$, $\beta_k\in (0,1]$. 
To simplify the analysis, we adopt in this section $\eta_{\gamma}^-=\eta_{\gamma}^+=1$ and $\eta_l^-=\eta_l^+=1$, 
making the definition of $\rho_k,\rho_k^{\beta}$ in~\eqref{eqn:decrease-ratio-1} and~\eqref{eqn:decrease-ratio-beta} identical 
across branches.
Thus, regardless of the value of $\alpha_k$, 
a serious step $x_{k+1}$ satisfies 
$r(x_k)-r(x_{k+1}) > \Phi_k(0)-\Phi_k(\beta_kd_k)$ and $r(x_k)-r(x_{k}+d_k) > \Phi_k(0)-\Phi_k(d_k)$. 

The merit function and line search conditions for consistency restoration are 
\begin{equation}\label{def:merit-fea}
   \begin{aligned}
	   \phi_{1\pi_k}(x)  =  r(x) + \pi_k \norm{c(x)}_1,
   \end{aligned}
\end{equation}
and 
\begin{equation}\label{eqn:line-search-cond-penal}
   \begin{aligned}
   \centering
	    \norm{c(x_k)}_1  +  \frac{\beta_k}{\pi_k} (\lambda^{k+1})^T \nabla c(x_k)^T d_k  \geq
	    \norm{c(x_{k+1})}_1 -\frac{\eta_{\beta}}{\pi_k}\frac{1}{2}\alpha_k\beta_k\norm{d_k}^2. 
   \end{aligned}
\end{equation}

The consistency restoration algorithm is given in Algorithm~\ref{alg:simp-bundle-const}
where $c^+_k := \norm{c(x_k) + \nabla c(x_k)^T d_k}$. It activates in 
Algorithm~\ref{alg:simp-bundle} when inconsistency occurs at step 3 and exits after one serious step iteration in step 11 of Algorithm~\ref{alg:simp-bundle-const}. However, 
it is possible that the linearized constraints remain inconsistent and   
Algorithm~\ref{alg:simp-bundle-const} is called repeatedly. 
In this case, the update rule of the penalty parameter ensures that the penalty problem assigns a larger emphasis on the linearized constraint violations. 
\begin{algorithm}
   \caption{Simplified bundle method: consistency restoration}\label{alg:simp-bundle-const}
	\begin{algorithmic}[1]
		\STATE{Given $x_k$, $\alpha_k$, $r_k$, $g_k$, $\theta_k$ and $\epsilon$ from Algorithm~\ref{alg:simp-bundle}, choose coefficient $\gamma_f >0$ and constraint error tolerance $\epsilon_c\geq 0$.
		Let $\pi_k=\max{(\pi_{k-j},\theta_k)}, j\in[0,k]$, where $j$ is the smallest integer so that $\pi_{k-j}$ exists.}
	\STATE{ Solve~\eqref{eqn:opt-ms-simp-bundle-penal} with $\pi_k$ to obtain $d_k$ and $\lambda^{k+1}$.}
	\IF{$\delta_k \leq \epsilon$ and $c^+_k\leq\epsilon_c$ }
          \STATE{Stop the iteration and exit the algorithm.}
        \ENDIF
		\STATE{Set $\pi_{k+1} = \max\{\pi_k, \theta_k,\norm{\lambda^{k+1}}_{\infty}+\gamma_f  \}$.  }
	\STATE{ 
	    Evaluate $r(x_k+d_k)$, $\delta_k$ in~\eqref{def:pd} and $\rho_k$ in~\eqref{eqn:decrease-ratio-1}.}
	\IF{$\rho_k > 0$}
	  \STATE{Find the line search step size $\beta_k>0$ using backtracking, starting at $\beta_k=1$ and halving 
		  if too large, such that~\eqref{eqn:line-search-cond-penal} is satisfied. Compute $\rho_k^{\beta}$ in~\eqref{eqn:decrease-ratio-beta}.}
            \IF{$\rho_k^{\beta} < 0$}
	     \STATE{Break and go to 13.}
	    \ENDIF
	    \STATE{Take the serious step $x_{k+1} = x_k+\beta_k d_k$.
	    Exit consistency restoration. Go back to Algorithm~\ref{alg:simp-bundle} and start a new iteration.}
       \ELSE
	   \STATE{Reject the trial step and update $\alpha_k$ with $\alpha_{k+1}=\eta_{\alpha}\alpha_k$.
            Go back to step 1.}
       \ENDIF
    \end{algorithmic}
\end{algorithm}

While Algorithm~\ref{alg:simp-bundle-const} solves a penalty subproblem instead, 
it includes all the elements in Algorithm~\ref{alg:simp-bundle} to deal with the nonsmoothness of $r$, including the update rule for $\alpha_k$.
Thus, we can reuse many analyses from Section~\ref{sec:alg-convg}.
First, Lemma~\ref{lem:sufficientdecrease} holds true, as in the form of the following lemma.
\begin{lemma}\label{lem:sufficientdecrease-penal}
	Under the Assumption~\ref{assp:upperC2}, the consistency restoration Algorithm \ref{alg:simp-bundle-const} produces a finite number 
	of rejected steps. Consequently, the parameter $\alpha_k$ of Algorithm~\ref{alg:simp-bundle-const} stabilizes, \textit{i.e.}, there exists $k$ such that $\alpha_t = \alpha_k$ for all $t\geq k$. 
\end{lemma}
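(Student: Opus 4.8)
The plan is to transcribe the proof of Lemma~\ref{lem:sufficientdecrease} almost verbatim, since the rejection mechanism of Algorithm~\ref{alg:simp-bundle-const} is insensitive to the penalty modification of the subproblem. Concretely, the test that decides whether a trial step is rejected --- lines~7,~9, and~13 of Algorithm~\ref{alg:simp-bundle-const} --- is governed entirely by the ratios $\rho_k$ and $\rho_k^{\beta}$ of~\eqref{eqn:decrease-ratio-1} and~\eqref{eqn:decrease-ratio-beta}, which through~\eqref{def:pd} and~\eqref{def:pd2} depend only on $r$, $g_k$, $\alpha_k$, and $d_k$, and not on the constraints $c$ nor on the penalty term $\pi_k\norm{c(x_k)+\nabla c(x_k)^T d}_1$ that distinguishes~\eqref{eqn:opt-ms-simp-bundle-penal} from~\eqref{eqn:opt-ms-simp-bundle}. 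Moreover, since this section fixes $\eta_l^{\pm}=\eta_{\gamma}^{\pm}=1$, the case split on the sign of $\delta_k$ collapses and a trial step survives the test precisely when $r(x_k)-r(x_k+d_k)>\delta_k$ and $r(x_k)-r(x_k+\beta_k d_k)>\delta_k^{\beta}$.

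First I would invoke Assumption~\ref{assp:upperC2} through the upper-$C^2$ inequality~\eqref{eqn:uppc2-def}: on the bounded feasible box $\{0\le x\le x_u\}$ there is a uniform constant $C>0$ such that
\begin{equation*}
  r(x_k+d)-r_k-g_k^T d \le C\norm{d}^2,
\end{equation*}
exactly as in~\eqref{eqn:opt-ms-appx-rec-p1}. This is a statement about $r$ alone, so it applies to the solution $d_k$ of the penalty subproblem~\eqref{eqn:opt-ms-simp-bundle-penal} --- whose bound constraints $d_l^k\le d\le d_u^k$ keep $x_k+d_k$ in the box --- and to $\beta_k d_k$ for every $\beta_k\in(0,1]$ by convexity of the box. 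Then, mimicking the first part of the proof of Lemma~\ref{lem:sufficientdecrease}, whenever $\alpha_k>2C$ one has
\begin{equation*}
  r_k-r(x_k+d_k)\ \ge\ -g_k^T d_k-C\norm{d_k}^2\ >\ -g_k^T d_k-\frac{1}{2}\alpha_k\norm{d_k}^2\ =\ \delta_k,
\end{equation*}
so $\rho_k>0$; the identical chain applied to $\beta_k d_k$ yields $r_k-r(x_k+\beta_k d_k)>\delta_k^{\beta}$, hence $\rho_k^{\beta}>0$. Thus no rejected step can occur at such an iteration, nor at any later one, since the algorithm never decreases $\alpha_k$; from that point on $\alpha_k$ stays frozen.

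Finally I would close the counting argument: $\alpha_k$ is nondecreasing, it is multiplied by $\eta_{\alpha}>1$ at each rejected step of Algorithm~\ref{alg:simp-bundle-const} (line~13) as well as at each rejected step of Algorithm~\ref{alg:simp-bundle}, and its value is carried over unchanged between the two algorithms on every invocation of the consistency restoration; hence at most $\lceil\log_{\eta_{\alpha}}(2C/\alpha_0)\rceil+1$ rejected steps can ever be taken before $\alpha_k>2C$, after which no further rejection is possible. This yields a finite number of rejected steps and a constant $\alpha_k$ for all large $k$. I do not expect a genuine obstacle: the only point requiring attention is precisely the one highlighted above --- that the upper-$C^2$ bound and the rejection test do not see the penalty term, and that $\alpha_k$ is a single monotone quantity shared across the possibly repeated calls to Algorithm~\ref{alg:simp-bundle-const} --- while everything else is a direct reuse of Lemma~\ref{lem:sufficientdecrease}.
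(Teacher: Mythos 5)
Your proposal is correct and follows essentially the same route as the paper, which simply observes that the rejection test and the $\alpha_k$ update in Algorithm~\ref{alg:simp-bundle-const} are identical to those of Algorithm~\ref{alg:simp-bundle} and depend only on $r$, so the proof of Lemma~\ref{lem:sufficientdecrease} carries over verbatim. Your extra observations --- that the penalty term is invisible to $\rho_k$ and $\rho_k^{\beta}$, that the case split collapses under $\eta_l^{\pm}=\eta_{\gamma}^{\pm}=1$, and the explicit bound $\lceil\log_{\eta_{\alpha}}(2C/\alpha_0)\rceil+1$ on the number of rejections --- are consistent with and slightly more detailed than the paper's one-paragraph argument.
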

\begin{proof}
	Since Algorithm~\ref{alg:simp-bundle-const} has identical mechanism for rejecting steps and increasing $\alpha_k$ to Algorithm~\ref{alg:simp-bundle}, which only relies on the property of $r$, the proof of Lemma~\ref{lem:sufficientdecrease} can be directly applied here.
	Only a finite number of rejected steps are needed to achieve $\alpha_k>2C$, which guarantees $\rho_k>0$, $\rho_k^{\beta}>0$, and produces a serious step. If $\alpha_k\leq 2C$ for all $k$, then only finite number of rejected steps are generated, which also means all steps are serious steps, \textit{i.e.}, $\rho_k>0,\rho_k^{\beta}>0$, for $k$ large enough.
As a result, there exists a $k$ such that $\alpha_t=\alpha_k$ for $t\geq k$  (see proof of Lemma~\ref{lem:sufficientdecrease}), with a finite number of rejected steps produced.
\end{proof}

We next show that the line search step 8 in Algorithm~\ref{alg:simp-bundle-const} is well-defined.
\begin{lemma}\label{lem:line-search-merit-2}
	Given $\pi_k>0$ and Assumption~\ref{assp:boundedHc}, step 8 of Algorithm~\ref{alg:simp-bundle-const} finds $\beta_k\in (0,1]$ satisfying the line search condition~\eqref{eqn:line-search-cond-penal} in a finite number of steps.
\end{lemma}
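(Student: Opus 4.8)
The plan is to follow the template of the proof of Lemma~\ref{lem:line-search-merit}, the one genuinely new ingredient being the description of the penalty-subproblem multipliers from Lemma~\ref{lem:lambda-relaxed-prop}. First I would Taylor-expand each equality constraint at the current serious point: since $c$ is twice differentiable, for $x_{k+1} = x_k + \beta_k d_k$ and $j = 1,\dots,m$,
\begin{equation*}
  c_j(x_{k+1}) = (1-\beta_k)\,c_j(x_k) + \beta_k\big(c_j(x_k) + \nabla c_j(x_k)^T d_k\big) + \frac{1}{2}\beta_k^2\, d_k^T H_{k\beta}^j d_k ,
\end{equation*}
where $H_{k\beta}^j$ is a Hessian of $c_j$ on the segment $[x_k, x_{k+1}]$. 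Taking absolute values, using $\beta_k \in (0,1]$ and Assumption~\ref{assp:boundedHc}, and summing over $j$, I would obtain
\begin{equation*}
  \norm{c(x_{k+1})}_1 \leq (1-\beta_k)\norm{c(x_k)}_1 + \beta_k\norm{c(x_k) + \nabla c(x_k)^T d_k}_1 + m\beta_k^2 H^c_u \norm{d_k}^2 .
\end{equation*}

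Next I would invoke Lemma~\ref{lem:lambda-relaxed-prop}. For $j \in V_k$ it gives $\lambda_j^{k+1} = \sigma_j^k(d_k)\pi_k$, hence $\lambda_j^{k+1}\big(c_j(x_k) + \nabla c_j(x_k)^T d_k\big) = \pi_k\,|c_j(x_k) + \nabla c_j(x_k)^T d_k|$; for $j \in A_k$ that product vanishes; and in either case $|\lambda_j^{k+1}| \leq \pi_k$, so $\norm{\lambda^{k+1}}_\infty \leq \pi_k$. Summing over $j$ then yields $(\lambda^{k+1})^T\big(c(x_k) + \nabla c(x_k)^T d_k\big) = \pi_k\norm{c(x_k) + \nabla c(x_k)^T d_k}_1$, so that
\begin{equation*}
  (\lambda^{k+1})^T\nabla c(x_k)^T d_k = \pi_k\norm{c(x_k) + \nabla c(x_k)^T d_k}_1 - (\lambda^{k+1})^T c(x_k) .
\end{equation*}
Combining this with $|(\lambda^{k+1})^T c(x_k)| \leq \norm{\lambda^{k+1}}_\infty \norm{c(x_k)}_1 \leq \pi_k\norm{c(x_k)}_1$ gives the key nonnegativity
\begin{equation*}
  \norm{c(x_k)}_1 + \frac{1}{\pi_k}(\lambda^{k+1})^T\nabla c(x_k)^T d_k - \norm{c(x_k) + \nabla c(x_k)^T d_k}_1 = \norm{c(x_k)}_1 - \frac{1}{\pi_k}(\lambda^{k+1})^T c(x_k) \geq 0 .
\end{equation*}

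Putting the two bounds together, I would bound the left-hand minus right-hand side of~\eqref{eqn:line-search-cond-penal} below by $\beta_k$ times the nonnegative quantity above plus $\big(\frac{\eta_{\beta}\alpha_k}{2\pi_k}\beta_k - m H^c_u\beta_k^2\big)\norm{d_k}^2$; hence~\eqref{eqn:line-search-cond-penal} holds whenever $0 < \beta_k \leq \eta_{\beta}\alpha_k/(2\pi_k m H^c_u)$, the restriction being vacuous when $H^c_u = 0$. Since $\alpha_k$ is a fixed positive scalar at the current iteration and $\pi_k > 0$, $m$, $H^c_u$ are fixed, this threshold is a positive number independent of the backtracking, so halving $\beta_k$ from $1$ reaches it in at most $\lceil \log_{2}\big(2\pi_k m H^c_u/(\eta_{\beta}\alpha_k)\big)\rceil$ steps, and the line search in step~8 of Algorithm~\ref{alg:simp-bundle-const} terminates finitely. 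I expect the only delicate point to be the second paragraph --- converting the multiplier inner product $(\lambda^{k+1})^T\nabla c(x_k)^T d_k$ into the $\ell_1$ linearized-violation terms via Lemma~\ref{lem:lambda-relaxed-prop}, which here plays the role that the exact identity $c(x_k)+\nabla c(x_k)^T d_k = 0$ played in the consistent case of Lemma~\ref{lem:line-search-merit}; the rest is the routine Taylor-expansion and backtracking bookkeeping already carried out there.
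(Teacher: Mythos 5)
Your proposal is correct and follows essentially the same route as the paper's proof: Taylor expansion of each $c_j$, the bound $\norm{c(x_{k+1})}_1 \leq (1-\beta_k)\norm{c(x_k)}_1 + \beta_k\norm{c(x_k)+\nabla c(x_k)^T d_k}_1 + m\beta_k^2 H_u^c\norm{d_k}^2$, Lemma~\ref{lem:lambda-relaxed-prop} to control $(\lambda^{k+1})^T \nabla c(x_k)^T d_k$, and the resulting threshold $\beta_k \leq \eta_\beta\alpha_k/(2 m H_u^c \pi_k)$ with the same backtracking count. Your aggregation of the multiplier identity into $(\lambda^{k+1})^T\bigl(c(x_k)+\nabla c(x_k)^T d_k\bigr) = \pi_k\norm{c(x_k)+\nabla c(x_k)^T d_k}_1$ is a slightly tidier packaging of the paper's per-index split over $A_k$ and $V_k$, but the argument is the same.
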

\begin{proof}
	Since constraint functions $c$ are twice differentiable, by Taylor expansion of its $jth$ component, $1\leq j \leq m$, for $x_{k+1}=x_k+\beta_k d_k$,
       \begin{equation} \label{eqn:simp-bundle-fea-c-ls-pf-1}
         \centering
         \begin{aligned}
		 c_j(x_{k+1})  = & c_j(x_k)+ \beta_k \nabla c_j(x_k)^T d_{k} + \frac{1}{2}\beta_k^2 d_k^T H_{k\beta}^{j} d_k,\\
      \end{aligned}
     \end{equation}
	where $H_{k\beta}^j$ depends on both $x_k$ and $d_k$. 
	By Assumption~\ref{assp:boundedHc} and triangle inequality, we have
   \begin{equation} \label{eqn:simp-bundle-fea-c-ls-pf-2}
   \centering
    \begin{aligned}
	    |c_j(x_{k+1})|  &= | c_j(x_k)+ \beta_k \nabla c_j(x_k)^T d_{k}+ \frac{1}{2}\beta_k^2 d_k^T H_{k\beta}^{j} d_k|\\
	    &\leq | c_j(x_k) + \beta_k \nabla c_j(x_k)^T d_{k}| + \beta_k^2 H^{c}_u \norm{d_k}^2\\
	     &= \left|(1-\beta_k) c_j(x_k) + \beta_k \left[c_j(x_k)+ \nabla c_j(x_k)^T d_{k}\right]\right|+\beta_k^2 H^{c}_u \norm{d_k}^2\\ 
	      & \leq (1-\beta_k)| c_j(x_k)| + \beta_k |c_j(x_k)+ \nabla c_j(x_k)^T d_{k}|+ \beta_k^2 H^{c}_u \norm{d_k}^2 . 
    \end{aligned}
   \end{equation}
	Applying the definition~\eqref{eqn:opt-ms-simp-sign} of $\sigma_j^k$ to~\eqref{eqn:simp-bundle-fea-c-ls-pf-2}, we can write 
\begin{equation} \label{eqn:simp-bundle-fea-c-ls-pf-2.3}
   \centering
    \begin{aligned}
	    |c_j(x_{k+1})|  
		  & \leq (1-\beta_k)|c_j(x_k)| + \beta_k \sigma_j^k(d_k)\left[c_j(x_k)+ \nabla c_j(x_k)^T d_{k}\right] + \beta_k^2 H^{c}_u \norm{d_k}^2.
    \end{aligned}
   \end{equation}
	Let us denote the cardinality of $A_k$ and $V_k$ by $|A_k|$ and $|V_k|$, respectively. By summing up~\eqref{eqn:simp-bundle-fea-c-ls-pf-2.3} over $j\in V_k$ and noting that $|c_j(x_k)|\geq \sigma_j^k(d_k) c_j(x_k)$, we have
   \begin{equation} \label{eqn:simp-bundle-fea-c-ls-pf-2.5}
     \centering
      \begin{aligned}
	      \sum_{j\in V_k}|c_j(x_{k+1})|  \leq& 
	       \sum_{j \in V_k}  |c_j(x_k)|+\beta_k\sum_{j \in V_k} \sigma_j^k(d_k)\nabla c_j(x_k)^Td_k 
	      + |V_k|\beta_k^2 H^{c}_u \norm{d_k}^2.\\
      \end{aligned}
    \end{equation}
	Similarly, we sum up~\eqref{eqn:simp-bundle-fea-c-ls-pf-2.3} over $j\in A_k$ and use the definition in~\eqref{eqn:opt-ms-simp-bundle-fea-A} to write 
   \begin{equation} \label{eqn:simp-bundle-fea-c-ls-pf-2.6}
     \centering
      \begin{aligned}
\sum_{j\in A_k}|c_j(x_{k+1})|  \leq& 
	      (1-\beta_k)\sum_{j \in A_k}  |c_j(x_k)| + |A_k| \beta_k^2 H^{c}_u \norm{d_k}^2. \\
      \end{aligned}
    \end{equation}
    Summing the two equations in~\eqref{eqn:simp-bundle-fea-c-ls-pf-2.5} and~\eqref{eqn:simp-bundle-fea-c-ls-pf-2.6}
    and using $|A_k|+|V_k|=m$ gives us 
\begin{equation} \label{eqn:simp-bundle-fea-c-ls-pf-3}
     \centering
      \begin{aligned}
	      \norm{c(x_k)}_1-\norm{c(x_{k+1})}_1 \geq  
	    \beta_k  \sum_{j \in A_k}  |c_j(x_k)|   -\beta_k \sum_{j \in V_k} \sigma_j^k(d_k)\nabla c_j(x_k)^T d_k 
	     - m\beta_k^2 H^{c}_u \norm{d_k}^2. \\
      \end{aligned}
    \end{equation}
    From Lemma~\ref{lem:lambda-relaxed-prop} and the definition~\eqref{eqn:opt-ms-simp-bundle-fea-A} of $A_k$, we can write
\begin{equation} \label{eqn:simp-bundle-fea-c-ls-pf-3.1}
     \centering
      \begin{aligned}
	      \sum_{j=1}^m \lambda_j^{k+1}\nabla c_j(x_k)^T d_k =& \sum_{j\in V_k} \lambda_j^{k+1} \nabla c_j(x_k)^T d_k +\sum_{j\in A_k} \lambda_j^{k+1} \nabla c_j(x_k)^T d_k\\
	      =& \sum_{j\in V_k} \sigma_j^k(d_k)\pi_k \nabla c_j(x_k)^T d_k -\sum_{j\in A_k} \lambda^{k+1}_j  c_j(x_k).\\
	      \geq& \sum_{j\in V_k} \sigma_j^k(d_k)\pi_k \nabla c_j(x_k)^T d_k - \pi_k\sum_{j\in A_k}  |c_j(x_k)|.\\
      \end{aligned}
    \end{equation}
    The inequality in~\eqref{eqn:simp-bundle-fea-c-ls-pf-3.1} comes from the second part of Lemma~\ref{lem:lambda-relaxed-prop}, 
    namely $\lambda^{k+1}_j\in [-\pi_k,\pi_k]$.
	Through basic algebraic calculations and applying~\eqref{eqn:simp-bundle-fea-c-ls-pf-3} and~\eqref{eqn:simp-bundle-fea-c-ls-pf-3.1}, 
   \begin{equation} \label{eqn:simp-bundle-fea-c-ls-pf-3.5}
   \centering
    \begin{aligned}
	    &  \norm{c(x_k)}_1 - \norm{c(x_{k+1})}_1 + \frac{\beta_k}{\pi_k} \sum_{j=1}^m \lambda^{k+1}_j\nabla c_j(x_k)^T d_k
	     \geq \\
	    &\qquad \beta_k \sum_{j \in A_k}  |c_j(x_k)|  
	      -\beta_k  \sum_{j \in V_k} \sigma_j^k(d_k)\nabla c_j(x_k)^T d_k
	      -  m\beta_k^2 H^{c}_u \norm{d_k}^2 \\ 
	    & \qquad+ \beta_k\sum_{j\in V_k} \sigma_j^k(d_k) \nabla c_j(x_k)^T d_k
	     - \beta_k \sum_{j\in A_k} | c_j(x_k) | \geq 
	    -  m \beta_k^2 H^{c}_u \norm{d_k}^2.
    \end{aligned}
   \end{equation}
   Thus, if $\beta_k$ satisfies 
	    $0< \beta_k \leq \frac{\eta_{\beta} \alpha_k }{2m H^c_u \pi_k}$,
   where both the denominator and numerator are positive and independent of line search, the line search condition~\eqref{eqn:line-search-cond-penal} is
   satisfied.
Using ceiling function $\lceil\cdot \rceil$, we have 
   \begin{equation} \label{eqn:simp-bundle-fea-c-ls-pf-5}
   \centering
    \begin{aligned}
	    \beta_k \geq \frac{1}{2} ^{\lceil \log_{\frac{1}{2}} \frac{\eta_{\beta} \alpha_k}{2 m H^c_u \pi_k} \rceil}.
    \end{aligned}
   \end{equation}
	For each $k$, given a ${\pi_k} >0$, the line search exits after a finite number of steps. 
\end{proof}
One can prove the decrease in merit function similarly to Lemma~\ref{lem:sufficientdecrease-merit}. 
\begin{lemma}\label{lem:sufficientdecrease-merit-penal}
	The serious step $x_{k+1}=x_k+\beta_k d_k$ is a descent step for the merit function~\eqref{def:merit-fea} if 
	$\beta_k$ is obtained through step 8 in Algorithm~\ref{alg:simp-bundle-const}. Further, if $\pi_k$ stabilizes,
	\textit{i.e.}, there exists $k$ such that $\pi_t=\pi_k>0$ for all $t\geq k$, the reduction of the merit function 
	satisfies $\phi_{1\pi_k}(x_k)-\phi_{1\pi_k}(x_{k+1})>c_{\phi}^{\pi}\norm{d_k}^2$ for some $c_{\phi}^{\pi}>0$.
\end{lemma}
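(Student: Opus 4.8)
The plan is to mirror the proof of Lemma~\ref{lem:sufficientdecrease-merit}, now adapted to the penalty subproblem~\eqref{eqn:opt-ms-simp-bundle-penal}, its optimality conditions~\eqref{eqn:simp-bundle-penal-KKT-1}, the merit function~\eqref{def:merit-fea}, and the line search condition~\eqref{eqn:line-search-cond-penal}. A welcome simplification is that in this section $\eta_{\gamma}^{+}=\eta_{\gamma}^{-}=1$ and $\eta_l^{+}=\eta_l^{-}=1$, so the ratios $\rho_k,\rho_k^{\beta}$ collapse to a single branch: a serious step $x_{k+1}=x_k+\beta_k d_k$, taken only when $\rho_k>0$ and $\rho_k^{\beta}>0$, automatically satisfies
\[
r(x_k)-r(x_{k+1}) > \Phi_k(0)-\Phi_k(\beta_k d_k) = -\beta_k g_k^T d_k - \frac{1}{2}\alpha_k\beta_k^2\norm{d_k}^2
\]
irrespective of the size of $\alpha_k$, which eliminates the three-case split of Lemma~\ref{lem:sufficientdecrease-merit}.

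First I would take the inner product of the first equation of~\eqref{eqn:simp-bundle-penal-KKT-1} with $-d_k$. Using the bound-constraint complementarity $Z_l^{k+1}(d_k-d_l^k)=0$, $Z_u^{k+1}(d_k-d_u^k)=0$ with $d_l^k=-x_k$, $d_u^k=x_u-x_k$, together with $\zeta_l^{k+1},\zeta_u^{k+1},x_k,x_u-x_k\geq 0$, I obtain the analogue of~\eqref{eqn:simp-bundle-KKT-2},
\[
-g_k^T d_k - \alpha_k\norm{d_k}^2 = (\lambda^{k+1})^T\nabla c(x_k)^T d_k + x_k^T\zeta_l^{k+1} + (x_u-x_k)^T\zeta_u^{k+1} \;\geq\; (\lambda^{k+1})^T\nabla c(x_k)^T d_k,
\]
where I must be mindful that $\lambda^{k+1}$ enters~\eqref{eqn:simp-bundle-penal-KKT-1} with the opposite sign to~\eqref{eqn:simp-bundle-KKT}. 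Multiplying by $\beta_k$, subtracting $\frac{1}{2}\alpha_k\beta_k^2\norm{d_k}^2$, and using $\beta_k\in(0,1]$ gives the analogue of~\eqref{eqn:simp-bundle-KKT-3},
\[
-\beta_k g_k^T d_k - \frac{1}{2}\alpha_k\beta_k^2\norm{d_k}^2 \;\geq\; \frac{1}{2}\alpha_k\beta_k\norm{d_k}^2 + \beta_k(\lambda^{k+1})^T\nabla c(x_k)^T d_k.
\]

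Next I would assemble the merit-function decrease. Writing $\phi_{1\pi_k}(x_k)-\phi_{1\pi_k}(x_{k+1}) = \left(r(x_k)-r(x_{k+1})\right) + \pi_k\norm{c(x_k)}_1 - \pi_k\norm{c(x_{k+1})}_1$, bounding $r(x_k)-r(x_{k+1})$ below first by the serious-step inequality above and then by the last display, and finally invoking~\eqref{eqn:line-search-cond-penal} multiplied by $\pi_k>0$, namely $\pi_k\norm{c(x_k)}_1 + \beta_k(\lambda^{k+1})^T\nabla c(x_k)^T d_k \geq \pi_k\norm{c(x_{k+1})}_1 - \eta_{\beta}\frac{1}{2}\alpha_k\beta_k\norm{d_k}^2$, I conclude $\phi_{1\pi_k}(x_k)-\phi_{1\pi_k}(x_{k+1}) > (1-\eta_{\beta})\frac{1}{2}\alpha_k\beta_k\norm{d_k}^2 > 0$ since $\eta_{\beta}<1$, which is the descent claim. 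For the quantitative bound, Lemma~\ref{lem:sufficientdecrease-penal} keeps $\alpha_k$ bounded and monotone (so $\alpha_k\geq\alpha_0$), while the hypothesis $\pi_t=\pi_k$ for all $t\geq k$ bounds $\{\pi_k\}$ above by some $\bar\pi$; substituting these into the explicit backtracking estimate~\eqref{eqn:simp-bundle-fea-c-ls-pf-5} yields, exactly as in~\eqref{eqn:simp-bundle-merit-pf-6}, a uniform lower bound $\beta_k\geq\bar\beta>0$ depending only on $\eta_{\beta},\alpha_0,m,H_u^c,\bar\pi$, whence $c_{\phi}^{\pi}:=(1-\eta_{\beta})\frac{1}{2}\alpha_0\bar\beta$ works.

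I expect the only delicate point to be the sign bookkeeping just flagged: one must check that the bound-multiplier contributions $x_k^T\zeta_l^{k+1}$ and $(x_u-x_k)^T\zeta_u^{k+1}$ are nonnegative so that the first display's inequality points the right way, and that this is consistent with the $\beta_k(\lambda^{k+1})^T\nabla c(x_k)^T d_k$ term appearing in~\eqref{eqn:line-search-cond-penal}. Everything else is a direct transcription of the chain of inequalities~\eqref{eqn:simp-bundle-KKT-2}--\eqref{eqn:simp-bundle-merit-pf-3} from Lemma~\ref{lem:sufficientdecrease-merit}; in particular, unlike in Lemma~\ref{lem:line-search-merit-2}, the index sets $A_k,V_k$ and the sign functions $\sigma_j^k$ are not needed here, since~\eqref{eqn:line-search-cond-penal} already packages the quantity $\beta_k(\lambda^{k+1})^T\nabla c(x_k)^T d_k$.
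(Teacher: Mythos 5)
Your proposal is correct and follows essentially the same route as the paper's proof: the single-branch serious-step inequality from $\eta_l^{\pm}=\eta_\gamma^{\pm}=1$, the inner product of the penalty KKT stationarity condition with $-d_k$ combined with the bound-multiplier nonnegativity, the line search condition~\eqref{eqn:line-search-cond-penal} scaled by $\pi_k$, and the uniform lower bound on $\beta_k$ from~\eqref{eqn:simp-bundle-fea-c-ls-pf-5} to get $c_\phi^\pi=(1-\eta_\beta)\tfrac{1}{2}\alpha_0\bar\beta^\pi$. Your sign bookkeeping for $\lambda^{k+1}$ and the observation that $A_k$, $V_k$, $\sigma_j^k$ are not needed here are both consistent with what the paper does.
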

\begin{proof}
	Since  $\rho_k>0,\rho_k^{\beta}>0$ at any serious step,  and $\eta_l^+=\eta_l^-=\eta_{\gamma}^+=\eta_{\gamma}^-=1$, we can compactly write based on definitions~\eqref{eqn:decrease-ratio-1},~\eqref{eqn:decrease-ratio-beta} and~\eqref{eqn:opt-ms-appx-rec-merit-beta} that
\begin{equation} \label{eqn:simp-bundle-merit-penal-pf-1}
 \centering
  \begin{aligned}
	  r(x_k) - r(x_{k+1}) >& -\beta_k g_{k}^Td_k -\frac{1}{2}\alpha_k\beta_k^2 \norm{d_k}^2. \\
  \end{aligned}
\end{equation}
	Let us rearrange the first equation in the KKT conditions~\eqref{eqn:simp-bundle-penal-KKT-1} to obtain 
\begin{equation} \label{eqn:simp-bundle-penal-merit-pf-1}
  \centering
   \begin{aligned}
	    g_k + \alpha_k d_k  = -\sum_{j=1}^m\lambda_j^{k+1} \nabla c_j(x_k) 
	     + \zeta_l^{k+1}-\zeta_u^{k+1}.\\
   \end{aligned}
 \end{equation}
	Taking the dot product with $-d_k$ on both sides of~\eqref{eqn:simp-bundle-penal-merit-pf-1} leads to
\begin{equation} \label{eqn:simp-bundle-penal-merit-pf-2}
  \centering
   \begin{aligned}
	   - g_k^Td_k - \alpha_k\norm{d_k}^2 =& \sum_{j=1}^m\lambda^{k+1}_j \nabla c_j(x_k)^T d_k 
	    -d_k^T \zeta_l^{k+1} + d_k^T \zeta_u^{k+1}.\\
   \end{aligned}
 \end{equation}
	Recall that $d_l^k=-x_k$ and $d_u^k=x_u-x_k$.
	Using the fourth and fifth equations in~\eqref{eqn:simp-bundle-penal-KKT-1}, we have
\begin{equation} \label{eqn:simp-bundle-penal-merit-pf-2.3}
  \centering
   \begin{aligned}
	   -d_k^T \zeta_l^{k+1} + d_k^T \zeta_u^{k+1} =& -(d_k-d_l^k+d_l^k)^T\zeta_l^{k+1}+(d_k-d_u^k+d_u^k)^T\zeta_u^{k+1}\\
					       =& x_k^T \zeta_l^{k+1} +(x_u-x_k)^T \zeta_u^{k+1}
					       \geq 0.
   \end{aligned}
 \end{equation}
	The inequality above is obtained from the bound constraints, namely, $0\leq x_k\leq x_u$, $\zeta_u^{k+1}\geq 0$, and $\zeta_l^{k+1}\geq 0$.
	Applying~\eqref{eqn:simp-bundle-penal-merit-pf-2.3} to~\eqref{eqn:simp-bundle-penal-merit-pf-2}, we have
\begin{equation} \label{eqn:simp-bundle-penal-merit-pf-2.5}
  \centering
   \begin{aligned}
	   -g_k^Td_k - \alpha_k\norm{d_k}^2 
	   \geq& \sum_{j=1}^m \lambda_j^{k+1} \nabla c_j(x_k)^T d_k. 
   \end{aligned}
 \end{equation}
	Since $\beta_k\in (0,1]$, one can multiply by $\beta_k$ and
	subtract $\frac{1}{2}\alpha_k\beta_k^2\norm{d_k}^2$ from both sides of~\eqref{eqn:simp-bundle-penal-merit-pf-2.5} to obtain
\begin{equation} \label{eqn:simp-bundle-penal-merit-pf-3}
  \centering
   \begin{aligned}
	   -\beta_k g_k^Td_k -\frac{1}{2}\alpha_k\beta_k^2\norm{d_k}^2
	   \geq&  \beta_k \alpha_k\norm{d_k}^2 - \frac{1}{2}\alpha_k\beta_k^2\norm{d_k}^2 +\beta_k\sum_{j=1}^m \lambda_j^{k+1} \nabla c_j(x_k)^T d_k\\ 
	   \geq& \frac{1}{2}\alpha_k\beta_k\norm{d_k}^2 +\beta_k\sum_{j=1}^m \lambda_j^{k+1} \nabla c_j(x_k)^T d_k\\ 
	   =&  \frac{1}{2}\alpha_k\beta_k\norm{d_k}^2 +\beta_k  (\lambda^{k+1})^T \nabla c(x_k)^T d_k. 
   \end{aligned}
 \end{equation}
	On the other hand,~\eqref{eqn:simp-bundle-merit-penal-pf-1} together with~\eqref{eqn:simp-bundle-penal-merit-pf-3} imply that the merit function satisfies
\begin{equation*}
 \centering
  \begin{aligned}
	  \phi_{1\pi_k}(x_k) - \phi_{1\pi_k}(x_{k+1})  =& r(x_k) -r(x_{k+1})+ \pi_k\norm{c(x_k)}_1
		    -\pi_k\norm{c(x_{k+1})}_1\\
			 >& -\beta_kg_k^T d_k-\frac{1}{2}\alpha_k\beta_k^2\norm{d_k}^2+ \pi_k\norm{c(x_k)}_1
		    -\pi_k\norm{c(x_{k+1})}_1\\
			\geq \frac{1}{2}\alpha_k\beta_k\norm{d_k}^2& +\beta_k (\lambda^{k+1})^T \nabla c(x_k)^T d_k+ \pi_k\norm{c(x_k)}_1 - \pi_k\norm{c(x_{k+1})}_1. \\
  \end{aligned}
\end{equation*}
The above inequality and the line search condition~\eqref{eqn:line-search-cond-penal} lead to
\begin{equation*} \label{eqn:simp-bundle-penal-merit-pf-5}
 \centering
  \begin{aligned}
	  \phi_{1\pi_k}(x_k) - \phi_{1\pi_k}(x_{k+1})  
			 >\frac{1}{2}\alpha_k\beta_k\norm{d_k}^2-\eta_{\beta}\frac{1}{2}\alpha_k\beta_k\norm{d_k}^2
			 =(1-\eta_{\beta})\frac{1}{2}\alpha_k\beta_k\norm{d_k}^2,
  \end{aligned}
\end{equation*}
which proves that $x_{k+1}$ is a descent step for $\phi_{1\pi_k}(\cdot)$. In addition, let $\bar{\pi}$ be the stabilized value for $\pi_k$,~\eqref{eqn:simp-bundle-fea-c-ls-pf-5} implies
\begin{equation*}
 \centering
  \begin{aligned}
	  \beta_k \geq \frac{1}{2} ^{\lceil \log_{\frac{1}{2}} \frac{\eta_{\beta} \alpha_0 }{2m H^c_u \bar{\pi}} \rceil}:=\bar{\beta}^{\pi}.
  \end{aligned}
\end{equation*}
Thus, 
$\phi_{1\pi_k}(x_k)-\phi_{1\pi_k}(x_{k+1})> c_{\phi}^{\pi} \norm{d_k}^2$, where $c_{\phi}^{\pi} = (1-\eta_{\beta})\frac{1}{2}\alpha_0 \bar{\beta}^{\pi}>0$ for all $k$.
\end{proof}
The sequence of $\{\pi_k\}$ is by design monotonically non-decreasing. Similar to Lemma \ref{lem:bounded-lp}, given that both equality and inequality constraints are present, we ask for LICQ to maintain boundedness of $\pi_k$.
\begin{lemma}\label{lem:bounded-penalty}
	If LICQ of the constraints in~\eqref{eqn:opt-ms-simp} are satisfied at every accumulation point $\bar{x}$ of serious steps $\{x_k\}$ generated by Algorithm~\ref{alg:simp-bundle} and~\ref{alg:simp-bundle-const}, then 
	the penalty parameter sequence $\{\pi_k\}$ stabilizes, 
	This means there exists $k$ such that $\pi_t = \pi_k$ for all $t\geq k$.
\end{lemma}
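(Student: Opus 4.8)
The plan is to argue by contradiction, in close parallel with the proof of Lemma~\ref{lem:bounded-lp}. First I would record that $\{\pi_k\}$ is nondecreasing by the way it is updated in steps 1 and 6 of Algorithm~\ref{alg:simp-bundle-const}, and that $\pi_k$ is modified only while consistency restoration is active; hence if Algorithm~\ref{alg:simp-bundle-const} produces only finitely many serious steps, then $\pi_k$ is eventually constant and the claim holds. So suppose it produces infinitely many serious steps (equivalently, by Lemma~\ref{lem:sufficientdecrease-penal}, is invoked infinitely often). Each time restoration is entered at an iteration $k$, subproblem~\eqref{eqn:opt-ms-simp-bundle} is infeasible, so the minimum of $\norm{c(x_k)+\nabla c(x_k)^T d}_1$ over the box $d_l^k\le d\le d_u^k$ is strictly positive; consequently the solution $d_k$ of the penalty subproblem~\eqref{eqn:opt-ms-simp-bundle-penal} satisfies $\norm{c(x_k)+\nabla c(x_k)^T d_k}_1>0$, \textit{i.e.}, the inactive set $V_k$ of~\eqref{eqn:opt-ms-simp-bundle-fea-I} is nonempty. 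Then Lemma~\ref{lem:lambda-relaxed-prop} gives $|\lambda_j^{k+1}|=\pi_k$ for $j\in V_k$ together with $|\lambda_j^{k+1}|\le\pi_k$ for every $j$, so $\norm{\lambda^{k+1}}_\infty=\pi_k$, and step 6 then forces $\pi_{k+1}\ge\pi_k+\gamma_f$. With monotonicity and infinitely many such events this yields $\pi_k\to\infty$. Let $\mathcal{K}$ be the infinite index set of restoration serious steps; since $x_k\in[0,x_u]$ for all $k$, I may pass to a subsequence with $x_k\to\bar x$, where $\bar x$ is an accumulation point at which LICQ is assumed to hold.

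The contradiction is extracted from the first optimality condition of the penalty subproblem. Rearranging the first line of~\eqref{eqn:simp-bundle-penal-KKT-1} gives
\[
  \sum_{j=1}^m \lambda_j^{k+1}\,\nabla c_j(x_k)\;-\;\zeta_l^{k+1}\;+\;\zeta_u^{k+1}\;=\;-\bigl(g_k+\alpha_k d_k\bigr),
\]
whose right-hand side is bounded: $\{g_k\}$ is bounded because $r$ is Lipschitz on the bounded domain, $\{\alpha_k\}$ is bounded by Lemma~\ref{lem:sufficientdecrease-penal}, and $\{d_k\}$ is bounded since $d_l^k\le d_k\le d_u^k$. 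Dividing through by $\pi_k=\norm{\lambda^{k+1}}_\infty\to\infty$, the normalized multipliers $\lambda^{k+1}/\pi_k$ stay on the $\infty$-norm unit sphere, the term $(g_k+\alpha_k d_k)/\pi_k\to 0$, and --- after combining the bound multipliers componentwise via the complementarity in~\eqref{eqn:simp-bundle-penal-KKT-1} so that only active indices contribute, exactly as in the proof of Lemma~\ref{lem:bounded-lp} --- the normalized bound multipliers are bounded as well (the relevant unit vectors are linearly independent and the remaining terms in the equation are bounded). Passing to a further subsequence along which the normalized multipliers converge and the active bound index set $I$ is constant, the limit of the normalized equation is a nontrivial linear combination of $\{\nabla c_j(\bar x)\}_{j=1}^m$ and the unit vectors $\{e_i\}_{i\in I}$ that equals zero, contradicting the linear independence guaranteed by LICQ at $\bar x$. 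Hence restoration is invoked only finitely often and $\{\pi_k\}$ stabilizes.

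The main obstacle is the bookkeeping needed to guarantee that the surviving index set $I$ consists of bound constraints active \emph{at $\bar x$}, so that LICQ at $\bar x$ genuinely applies: the complementarity in~\eqref{eqn:simp-bundle-penal-KKT-1} ties a nonzero component of $\zeta_l^{k+1}$ or $\zeta_u^{k+1}$ to the \emph{trial} point $x_k+d_k$ hitting a bound, and a priori $x_k+d_k$ converges to $\bar x+\bar d$ rather than to $\bar x$. I would close this as in the proof of Lemma~\ref{lem:bounded-lp}, by first establishing $\norm{d_k}\to 0$ along the subsequence --- using the per-step merit decrease from the proof of Lemma~\ref{lem:sufficientdecrease-merit-penal}, namely $\phi_{1\pi_k}(x_k)-\phi_{1\pi_k}(x_{k+1})\ge(1-\eta_\beta)\tfrac12\alpha_k\beta_k\norm{d_k}^2$ with $\beta_k$ bounded below by the explicit backtracking estimate of Lemma~\ref{lem:line-search-merit-2}, together with $r$ being bounded below on $[0,x_u]$ and $\pi_k\norm{c(x_k)}_1\ge0$ --- so that $x_k+d_k\to\bar x$ and every bound active at $x_k+d_k$ is eventually active at $\bar x$. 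That $\pi_k$ is not yet known to be constant at this stage is precisely what makes this telescoping delicate, since $\phi_{1\pi_k}(x_k)$ need not be monotone across the upward jumps of $\pi_k$; one handles it by tracking $\phi_{1\pi_k}(x_k)$ against its lower bound and charging each jump $(\pi_{k+1}-\pi_k)\norm{c(x_{k+1})}_1\ge0$ separately, which is where most of the work concentrates.
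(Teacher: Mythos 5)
Your proof takes essentially the same route as the paper's: the contradiction is extracted from the first equation of~\eqref{eqn:simp-bundle-penal-KKT-1}, using boundedness of $g_k+\alpha_k d_k$ (Lipschitz $r$ on a bounded domain, Lemma~\ref{lem:sufficientdecrease-penal}, bounded $d_k$) against the blow-up of $\norm{\lambda^{k+1}}_{\infty}$ forced by the update rule, with LICQ at an accumulation point supplying the linear independence that makes the multiplier side unbounded. Two remarks. First, your observation that genuine inconsistency gives $V_k\neq\emptyset$, hence $\norm{\lambda^{k+1}}_{\infty}=\pi_k$ exactly by Lemma~\ref{lem:lambda-relaxed-prop} and therefore $\pi_{k+1}\geq\pi_k+\gamma_f$, is correct and sharper than the paper's looser claim that unboundedness of $\pi_k$ requires $\pi_k<\norm{\lambda^{k+1}}_{\infty}+\gamma_f$ infinitely often; it in fact shows that under LICQ the restoration routine can be triggered by a truly inconsistent linearization only finitely many times. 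Second, the one place your proposal does not close is the sub-argument you yourself flag: establishing $\norm{d_k}\to 0$ \emph{under the contradiction hypothesis} $\pi_k\to\infty$ does not go through as set up, because the backtracking lower bound~\eqref{eqn:simp-bundle-fea-c-ls-pf-5} on $\beta_k$ decays like $1/\pi_k$, and the upward charges $(\pi_{k+1}-\pi_k)\norm{c(x_{k+1})}_1\geq\gamma_f\norm{c(x_{k+1})}_1$ need not be summable, so the telescoped inequality bounds $\sum_k\alpha_k\beta_k\norm{d_k}^2$ only by a possibly infinite quantity (and even a finite bound on $\sum_k\norm{d_k}^2/\pi_k$ would not force $\norm{d_k}\to 0$). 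That said, the active-set mismatch you are trying to repair --- the bound multipliers are attached to the trial point $x_k+d_k$, which a priori converges to $\bar{x}+\bar{d}$ rather than to $\bar{x}$ --- is passed over silently in the paper's own proofs of both Lemma~\ref{lem:bounded-lp} and this lemma, so at the paper's level of rigor your core argument is complete; the residual gap is one you share with the paper rather than one you introduce.
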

\begin{proof}
     We rewrite the first equation in optimality condition in~\eqref{eqn:simp-bundle-penal-KKT-1} as
    \begin{equation} \label{eqn:simp-bundle-penal-KKT-full}
     \centering
     \begin{aligned}
	     g_k + \alpha_k d_k + \sum_{j=1}^m \lambda^{k+1}_j \nabla c_j(x_k) -\sum_{i=1}^n (\zeta_l^{k+1})_i e_i 
	     +\sum_{i=1}^n (\zeta_u^{k+1})_i e_i &=0,\\
     \end{aligned}
    \end{equation}
    where $e_i \in \Rbb^n$ is a vector such that $(e_i)_i = 1$ and $(e_i)_l = 0,l\neq i$. 
    The Lagrange multipliers for the bound constraints can be combined into one vector $\zeta^{k+1}=\zeta_l^{k+1}- \zeta_u^{k+1}$,
    since $(\zeta_l^{k+1})_i(\zeta_u^{k+1})_i = 0$.
	A component of $\zeta_l^{k+1}$ or $\zeta_u^{k+1}$ is unbounded if and only if the corresponding component in $\zeta^{k+1}$ is unbounded.
	Let $I$ be the index set of the active bound constraints, we have
    \begin{equation} \label{eqn:simp-bundle-penal-KKT-full-2}
     \centering
     \begin{aligned}
	      g_k + \alpha_k d_k = -\sum_{j=1}^m \lambda^{k+1}_j \nabla c_j(x_k) +\sum_{i \in I} (\zeta^{k+1})_i e_i.\\
     \end{aligned}
    \end{equation}
	Suppose contrary to the Lemma, $\{\pi_k\}$ is not bounded as $k\to\infty$. Let $\bar{x}$ be an accumulation point of $\{x_k\}$ and the subsequence $x_{k_u} \to \bar{x}$. We have $\pi_{k_u}\to \infty$.  
	Since $\{x_k\},\{g_k\}$ and $\{\alpha_k\}$ are bounded (Lemma~\ref{lem:sufficientdecrease-penal}), 
	the left-hand side of~\eqref{eqn:simp-bundle-penal-KKT-full-2} is bounded as $x_{k_u}\to\bar{x}$. 
	
	From~Lemma~\ref{lem:bounded-lp} and steps 5 in Algorithm~\ref{alg:simp-bundle-const}, an unbounded $\{\pi_k\}$ is caused by $\pi_k < \norm{\lambda^{k+1}}_{\infty} +\gamma_f$ occurring infinite number of times. 
	Therefore, the update rule of $\pi_k$ ensures that $\norm{\lambda^{k}}_{\infty} \to \infty$. In particular,  $\norm{\lambda^{k_u}}_{\infty} \to \infty$ while the corresponding subsequences $\{x_{k_u}\}$, $\{g_{k_u}\}$, $\{\alpha_{k_u}\}$ remain bounded.
	From LICQ at $\bar{x}$, $\nabla c_j(\bar{x}) \in \Rbb^n $ and $e_i,i \in I$ are linearly independent and bounded vectors. 
	 Regardless of the behavior of $\{\zeta^{k_u}\}$, the right-hand side of~\eqref{eqn:simp-bundle-penal-KKT-full-2} will be unbounded due to linear independence at $\bar{x}$ among the vectors.
	This is a contradiction. 
	Therefore, the sequence $\{\lambda_k\}$ is bounded, as well as $\{\pi_k\}$.
	Since $\pi_k$ is determined by $\lambda^k$, there exists $k$ such that $\pi_t=\pi_k$ for all $t\geq k$. 
\end{proof}

The global convergence analysis from Section~\ref{sec:alg-convg} stands as $\pi_k$  and $\theta_k$ are bounded. This is reflected in the following two theorems, which are similar to Theorem~\ref{thm:simp-KKT}.

%
\begin{theorem}\label{thm:simp-KKT-consistency-finite}
	Under Assumptions~\ref{assp:upperC2},~\ref{assp:boundedHc} and LICQ conditions of Lemma~\ref{lem:bounded-lp}, if Algorithm~\ref{alg:simp-bundle-const} is called finitely many times, every accumulation point of the serious point sequence $\{x_k\}$ generated from Algorithms~\ref{alg:simp-bundle}
	and~\ref{alg:simp-bundle-const} is a KKT point of~\eqref{eqn:opt-ms-simp}. 
\end{theorem}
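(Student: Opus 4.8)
The plan is to reduce this statement to the convergence result already established for Algorithm~\ref{alg:simp-bundle} in Theorem~\ref{thm:simp-KKT}. Since Algorithm~\ref{alg:simp-bundle-const} is invoked only finitely many times, there is an index $K$ such that for every $k\ge K$ the iterate $x_{k+1}$ is produced by solving the linearized subproblem~\eqref{eqn:opt-ms-simp-bundle}; that is, the tail $\{x_k\}_{k\ge K}$ is generated purely by Algorithm~\ref{alg:simp-bundle} started from $x_K$ with coefficient $\alpha_K$ and penalty parameter $\theta_K$. Because the set of accumulation points of $\{x_k\}$ coincides with that of any tail, it suffices to prove the claim for $\{x_k\}_{k\ge K}$, and I would first record this reduction.

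Next I would argue that the algorithm parameters stabilize along the tail. The coefficient $\alpha_k$ is nondecreasing across both Algorithm~\ref{alg:simp-bundle} and Algorithm~\ref{alg:simp-bundle-const}, and each algorithm generates only finitely many rejected steps (Lemma~\ref{lem:sufficientdecrease} and Lemma~\ref{lem:sufficientdecrease-penal}), so after at most finitely many further rejections beyond $K$ one has $\alpha_k=\bar\alpha$ for all large $k$. For $\theta_k$, observe that it too is nondecreasing, that the finitely many penalty values $\pi_k$ produced during the finitely many calls to Algorithm~\ref{alg:simp-bundle-const} are each finite (being maxima of finitely many finite quantities, the multipliers of the penalty QP~\eqref{eqn:opt-ms-simp-bundle-penal} being finite) and hence bounded by some $\bar\pi$, and that Lemma~\ref{lem:bounded-lp} gives, under the LICQ hypothesis at every accumulation point, a uniform bound $\norm{\lambda^{k+1}}_\infty\le\lambda^U$. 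Hence $\theta_k\le\max\{\theta_K,\ \bar\pi,\ \eta_\gamma^-\lambda^U+\gamma\}$ for $k\ge K$, and being nondecreasing it stabilizes to some $\bar\theta$.

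With $\alpha_k\equiv\bar\alpha$ and $\theta_k\equiv\bar\theta$ for large $k$, the remainder is a verbatim repetition of the proof of Theorem~\ref{thm:simp-KKT}. By Lemma~\ref{lem:line-search-merit} the backtracking line search terminates in finitely many steps, and by Lemma~\ref{lem:sufficientdecrease-merit} each serious step satisfies $\phi_{1\bar\theta}(x_k)-\phi_{1\bar\theta}(x_{k+1})>c_\phi\norm{d_k}^2$ for some $c_\phi>0$. Since $x$ is confined to the bounded box $0\le x\le x_u$, $r$ is Lipschitz, and $c$ is continuous, $\phi_{1\bar\theta}$ is bounded below on this set, so the decreasing sequence $\{\phi_{1\bar\theta}(x_k)\}$ converges and therefore $\norm{d_k}\to0$. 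Taking a subsequence $x_{k_s}\to\bar x$ (which exists by boundedness) and, passing to a further subsequence, $g_{k_s}\to\bar g$, $\lambda^{k_s+1}\to\bar\lambda$, $\zeta_l^{k_s+1}\to\bar\zeta_l$, $\zeta_u^{k_s+1}\to\bar\zeta_u$ (all bounded by Lemma~\ref{lem:bounded-lp} and local boundedness of the Clarke subdifferential), I would pass to the limit in the optimality system~\eqref{eqn:simp-bundle-KKT}: the last equation together with $\norm{d_{k_s}}\to0$ yields $c(\bar x)=0$; the bound constraints and their complementarity conditions in~\eqref{eqn:simp-bundle-KKT-bound} survive the limit; and by outer semicontinuity of $\bar\partial r$, $\bar g\in\bar\partial r(\bar x)$, so the first equation gives $0\in\bar\partial r(\bar x)-\nabla c(\bar x)\bar\lambda-\bar\zeta_l+\bar\zeta_u$. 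Hence $\bar x$ satisfies~\eqref{eqn:opt-ms-simp-KKT} and is a KKT point of~\eqref{eqn:opt-ms-simp}. The main obstacle I anticipate is the bookkeeping in the parameter-stabilization step: $\alpha_k$ and $\theta_k$ are updated by both algorithms and across the switches between them, so one must verify that the finitely many excursions into Algorithm~\ref{alg:simp-bundle-const}, and in particular the $\pi_{k-1}$-branch of the $\theta_k$ update in step~7 of Algorithm~\ref{alg:simp-bundle}, cannot drive either parameter to infinity; once this is secured, the convergence argument transfers unchanged from Theorem~\ref{thm:simp-KKT}.
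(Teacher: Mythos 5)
Your proposal is correct and follows essentially the same route as the paper, which simply observes that once Algorithm~\ref{alg:simp-bundle-const} stops being invoked the tail of the iteration is generated purely by Algorithm~\ref{alg:simp-bundle}, so the claim is a direct consequence of Theorem~\ref{thm:simp-KKT}. Your additional bookkeeping showing that the finitely many excursions into the consistency-restoration algorithm cannot destabilize $\alpha_k$ or $\theta_k$ is a welcome (and correct) elaboration of a step the paper leaves implicit.
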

Since there are only a finite number of consistency restoration steps, the linearized constraints become consistent for $k$ large enough. Thus, only Algorithm~\ref{alg:simp-bundle} is called for $k$ large enough. As a result, Theorem~\ref{thm:simp-KKT-consistency-finite} is a direct consequence of Theorem~\ref{thm:simp-KKT}.

\begin{theorem}\label{thm:simp-KKT-consistency-bounded}
	Under Assumptions~\ref{assp:upperC2},~\ref{assp:boundedHc} and LICQ conditions of Lemma~\ref{lem:bounded-lp} and~\ref{lem:bounded-penalty}, if Algorithm~\ref{alg:simp-bundle-const} is called infinitely many times,
	then any accumulation point of the serious point sequence $\{x_k\}$ generated by Algorithms~\ref{alg:simp-bundle} 
	and~\ref{alg:simp-bundle-const}
	is a KKT point of~\eqref{eqn:opt-ms-simp}.
\end{theorem}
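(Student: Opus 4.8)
The plan is to follow the architecture of the proof of Theorem~\ref{thm:simp-KKT}, with two additions specific to the interleaving of Algorithms~\ref{alg:simp-bundle} and~\ref{alg:simp-bundle-const}: one must work with a single merit function even though the run alternates between $\phi_{1\theta_k}$ and $\phi_{1\pi_k}$, and one must establish feasibility of accumulation points that are reached through restoration steps, where the linearized constraints are not satisfied exactly. I would begin by recording that all algorithm parameters eventually freeze. By Lemmas~\ref{lem:sufficientdecrease} and~\ref{lem:sufficientdecrease-penal}, both algorithms produce only finitely many rejected steps and $\alpha_k$ is ultimately a constant $\bar\alpha$. By Lemma~\ref{lem:bounded-lp} the subproblem multipliers are bounded and $\theta_k$ stabilizes; by Lemma~\ref{lem:bounded-penalty}, $\pi_k$ stabilizes. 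Because Algorithm~\ref{alg:simp-bundle-const} is invoked infinitely often and the updates interlock --- step~7 of Algorithm~\ref{alg:simp-bundle} lifts $\theta$ to at least the preceding $\pi$ after a restoration, while steps~1 and~6 of Algorithm~\ref{alg:simp-bundle-const} lift $\pi$ to at least the current $\theta$ --- the two stabilized values must coincide, $\lim_k\theta_k=\lim_k\pi_k=:\bar\mu$, so that for all large $k$ the merit functions collapse to one fixed function $\phi_{1\bar\mu}$.

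With this in hand I would run the usual descent bookkeeping on $\phi_{1\bar\mu}$. For large $k$, every serious step is produced either by Algorithm~\ref{alg:simp-bundle}, in which case Lemma~\ref{lem:sufficientdecrease-merit} (whose hypothesis on bounded $\lambda^k$ holds by Lemma~\ref{lem:bounded-lp}) gives $\phi_{1\bar\mu}(x_k)-\phi_{1\bar\mu}(x_{k+1})>c_\phi\norm{d_k}^2$, or by Algorithm~\ref{alg:simp-bundle-const}, in which case Lemma~\ref{lem:sufficientdecrease-merit-penal} (whose hypothesis holds by Lemma~\ref{lem:bounded-penalty}) gives $\phi_{1\bar\mu}(x_k)-\phi_{1\bar\mu}(x_{k+1})>c_\phi^\pi\norm{d_k}^2$; in both cases the decrease is at least $c\norm{d_k}^2$ with $c:=\min\{c_\phi,c_\phi^\pi\}>0$. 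Since $r$ is Lipschitz and $c(\cdot)$ continuous on the compact box $\{0\le x\le x_u\}$, $\phi_{1\bar\mu}$ is bounded below; being monotone non-increasing along serious steps it converges, and summing the decreases gives $\sum_k\norm{d_k}^2<\infty$, hence $\norm{d_k}\to 0$. Compactness of the box then yields an accumulation point $\bar x$ of $\{x_k\}$; I would pass to a subsequence $x_{k_s}\to\bar x$ along which the generating algorithm is constant and along which $g_{k_s}\to\bar g$, $\lambda^{k_s+1}\to\bar\lambda$, $\zeta_l^{k_s+1}\to\bar\zeta_l$, $\zeta_u^{k_s+1}\to\bar\zeta_u$ (all bounded, by Lemmas~\ref{lem:bounded-lp} and~\ref{lem:lambda-relaxed-prop}). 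If this subsequence is generated by Algorithm~\ref{alg:simp-bundle}, then $c(x_{k_s})+\nabla c(x_{k_s})^Td_{k_s}=0$ together with $\norm{d_{k_s}}\to 0$ gives $c(\bar x)=0$, and passing to the limit in the remaining relations of~\eqref{eqn:simp-bundle-KKT}, using $\bar g\in\bar\partial r(\bar x)$ from the outer semicontinuity of the Clarke subdifferential, reproduces~\eqref{eqn:opt-ms-simp-KKT} exactly as in Theorem~\ref{thm:simp-KKT}. If the subsequence is generated by Algorithm~\ref{alg:simp-bundle-const}, the stationarity equation and the bound-complementarity relations of~\eqref{eqn:simp-bundle-penal-KKT-1} pass to the limit in the same fashion, and only feasibility, $c(\bar x)=0$, remains to be shown.

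Feasibility of the restoration-reached accumulation points is the step I expect to be the real obstacle, and I would handle it by exploiting the penalty-parameter update rather than by a direct estimate. Restoration is entered at iteration $k$ precisely because~\eqref{eqn:opt-ms-simp-bundle} admits no box-feasible $d$ with $c(x_k)+\nabla c(x_k)^Td=0$; consequently the penalty solution $d_k$ cannot satisfy all linearized constraints, \textit{i.e.}, $V_k\neq\emptyset$, whence Lemma~\ref{lem:lambda-relaxed-prop} forces $|\lambda^{k+1}_j|=\pi_k$ for every $j\in V_k$, so $\norm{\lambda^{k+1}}_\infty=\pi_k$ and step~6 of Algorithm~\ref{alg:simp-bundle-const} sets $\pi_{k+1}\ge\pi_k+\gamma_f$. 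Thus each call to Algorithm~\ref{alg:simp-bundle-const} inflates $\pi$ by at least $\gamma_f>0$; infinitely many calls would drive $\pi_k\to\infty$, contradicting the boundedness of $\{\pi_k\}$ provided by Lemma~\ref{lem:bounded-penalty} under the assumed LICQ. Hence, under the theorem's hypotheses, Algorithm~\ref{alg:simp-bundle-const} is in fact invoked only finitely often, the tail of the run is governed entirely by Algorithm~\ref{alg:simp-bundle} with consistent subproblems, feasibility of $\bar x$ is delivered by the Algorithm-\ref{alg:simp-bundle} limiting argument above, and the statement follows from Theorem~\ref{thm:simp-KKT} (equivalently, from Theorem~\ref{thm:simp-KKT-consistency-finite}). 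In other words, the genuine content of the result is the realization that the premise ``restoration is called infinitely often'' and the LICQ-based penalty bound cannot both hold, which is exactly what makes the otherwise delicate feasibility of $\bar x$ follow for free by collapsing this case into the finite-restoration one.
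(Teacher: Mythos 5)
Your proposal is correct, and its first two-thirds (parameter stabilization, the collapse to a single merit function $\phi_{1\bar\mu}$, the $\norm{d_k}^2$ descent rate, $d_k\to 0$, and the limiting KKT argument via outer semicontinuity) matches the paper's proof almost step for step. Where you genuinely diverge is the feasibility of restoration-reached accumulation points. The paper argues directly: once $\pi_k$ has stabilized at $\bar\pi$, the update in Algorithm~\ref{alg:simp-bundle-const} forces $\bar\pi\ge\norm{\lambda^{k+1}}_\infty+\gamma_f$, so the first two equations of~\eqref{eqn:simp-bundle-penal-KKT-2} give $p_j^{k+1},q_j^{k+1}\ge\gamma_f>0$, complementarity then forces $v^k=w^k=0$, and $c(x_k)+\nabla c(x_k)^Td_k=0$ together with $d_k\to 0$ yields $c(\bar x)=0$. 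You run the same mechanism in the opposite direction: a genuine inconsistency forces $V_k\neq\emptyset$, Lemma~\ref{lem:lambda-relaxed-prop} pins $\norm{\lambda^{k+1}}_\infty=\pi_k$, the update then lifts $\pi$ by at least $\gamma_f$ per restoration call, and infinitely many calls contradict the boundedness of $\{\pi_k\}$ from Lemma~\ref{lem:bounded-penalty}, collapsing the statement into Theorem~\ref{thm:simp-KKT-consistency-finite}. Both arguments rest on exactly the same two ingredients (the $\pm\pi_k$ multipliers on violated linearized constraints and the $+\gamma_f$ margin in the penalty update), so your proof is valid; what it buys is brevity and an honest accounting of what is actually happening, namely that under the stated LICQ hypotheses the premise ``Algorithm~\ref{alg:simp-bundle-const} is called infinitely many times'' cannot occur, so the theorem is vacuously true as written. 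The paper's own proof contains the same tension without acknowledging it: its conclusion $v^{k}=w^{k}=0$ for large $k$ means the linearized constraints were in fact consistent at those iterates, contradicting the very reason the restoration routine was invoked. The one caveat to your route is that it hinges on reading ``inconsistent'' in step 3 of Algorithm~\ref{alg:simp-bundle} literally (the linearized system truly has no box-feasible solution, so the penalty solution must leave $V_k$ nonempty); under a looser, numerical notion of inconsistency your contradiction evaporates while the paper's direct slack-variable argument survives, which is presumably why the authors wrote it the way they did.
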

\begin{proof}
The proof is similar to that of Theorem~\ref{thm:simp-KKT}.
	First, by Lemma~\ref{lem:sufficientdecrease} and~\ref{lem:sufficientdecrease-penal}, there exists $k_0$ such that $\alpha_t = \alpha_k$ for all $t\geq k_0$ and all steps produced by both algorithms are serious steps.
We note that by design both penalty parameters $\theta_k$ and $\pi_k$ in~\eqref{eqn:opt-ms-simp-bundle-merit} and~\eqref{def:merit-fea} are monotonically non-decreasing across iterations and algorithms. By Lemma~\ref{lem:bounded-lp}, $\lambda^k$ from Algorithm~\ref{alg:simp-bundle} is bounded. 
By Lemma~\ref{lem:bounded-penalty}, $\pi_k$ is bounded. From step 7 in Algorithm~\ref{alg:simp-bundle} and
step 5 in Algorithm~\ref{alg:simp-bundle-const}, there exists $k_1$ so that $\theta_t=\theta_k=\pi_t$ for all $t\geq k_1$.
Let $k>\max(k_0,k_1)$, all parameters for both algorithms stabilize and 
	the merit functions $\phi_{1\theta_k}(\cdot)$ of~\eqref{eqn:opt-ms-simp-bundle-merit} and  $\phi_{1\pi_k}(\cdot)$  of~\eqref{def:merit-fea} have the same form with $\theta_k=\pi_k:=\bar{\theta}=\bar{\pi}$
	, where $\bar{\theta}=\bar{\pi}$ is the stabilized value.

Since $\{x_k\}$ is bounded, there exists at least one accumulation point for it. Let $\bar{x}$ be an accumulation point and $\{x_{k_s}\}$ be the subsequence of $\{x_k\}$ such that $x_{k_s}\to\bar{x}$.
From Lemma~\ref{lem:line-search-merit-2}, line search terminates successfully in finite steps. 
	By Lemma~\ref{lem:sufficientdecrease-merit} and~\ref{lem:sufficientdecrease-merit-penal}, $\{\phi_{1\pi_k}(x_k)\}$ and $\{\phi_{1\theta_k}(x_k)\}$ decrease monotonically in the order of $\norm{d_k}^2$ and are bounded below. Therefore, they converge and $\{d_k\}$ generated by both algorithms satisfies $d_k \to 0$. In particular, $d_{k_s}\to 0$

Compared to Theorem~\ref{thm:simp-KKT-consistency-finite}, Theorem~\ref{thm:simp-KKT-consistency-bounded} leaves the possibility that after a finite number of calls of Algorithm~\ref{alg:simp-bundle}, only  Algorithm~\ref{alg:simp-bundle-const} is invoked.  
	From  step 5 in Algorithm~\ref{alg:simp-bundle-const} we have $\bar{\pi} > \lambda_j^{k_s+1} + \gamma_f$ for $j\in[1,m]$ and $k_s$ large enough. 
	Using the first two equalities in~\eqref{eqn:simp-bundle-penal-KKT-2}, we have $p_j^{k_s+1}\geq \gamma_f>0$. Passing on to a subsequence if necessary, we assume $p^{k_s} \to \bar{p}>0$, and thus by the third equation in~\eqref{eqn:simp-bundle-penal-KKT-2} $v^{k_s}\to \bar{v} = 0$. Similarly, we have $w^{k_s}\to 0$. From third equation in~\eqref{eqn:opt-ms-simp-bundle-penal}, we have $c(\bar{x}) = 0$.

Finally, passing to a subsequence if necessary, we let $g_{k_s}\to \bar{g}$, 
	$\lambda^{k_{s}+1} \to \bar{\lambda}$, $\zeta_u^{k_{s}+1} \to \bar{\zeta}_u$, $\zeta_l^{k_{s}+1} \to \bar{\zeta}_l$. 
        From the first equation in the optimality conditions~\eqref{eqn:simp-bundle-penal-KKT-1}, we have 
        \begin{equation} \label{eqn:simp-bundle-penal-KKT-limit-1}
		  0 = \bar{g} + \nabla c(\bar{x}) \bar{\lambda}  -\bar{\zeta}_l +\bar{\zeta}_u. 
        \end{equation}
	By the outer semicontinuity of Clarke subdifferential, with $g_{k_s}\in \bar{\partial} r(x_{k_s})$, we have $\bar{g} \in \bar{\partial} r(\bar{x})$.
        As a result, 
	  $0 \in \bar{\partial} r(\bar{x}) +\nabla c(\bar{x}) \bar{\lambda} -\bar{\zeta}_l +\bar{\zeta}_u$.
	The complementarity conditions of bound constraints from~\eqref{eqn:simp-bundle-penal-KKT-1} lead 
	to $\bar{Z}_u(\bar{x}-x_u)$, $\bar{Z}_l\bar{x}=0$. Together with the 
	equality constraints $c(\bar{x}) = 0$, the first-order necessary 
	optimality conditions~\eqref{eqn:opt-ms-simp-KKT} of problem~\eqref{eqn:opt-ms-simp} at $\bar{x}$ are satisfied.
	The proof above stands valid for the case where both algorithms are called infinitely many times as well.
\end{proof}

If LICQ is not satisfied at the accumulation points, the update rules for penalty parameters $\theta_k$ and $\pi_k$ can be modified to encourage convergence to critical points of the linearized constraint violation. It is also possible to ask for a less restrictive constraint qualification.
For example, in smooth optimization, under a less restrictive constraint qualification, Mangasarian–Fromovitz constraint qualification, Byrd et al.~\cite{byrd2005} show that if $\pi_k \to \infty$ and $c(\bar{x}) = 0$, then their algorithm converges 
to a KKT point. Such result for (nonsmooth) upper-$C^2$ objective function will be the subject of our future research.

\section{Numerical Applications}\label{sec:exp}
We present three numerical examples to demonstrate the theoretical and numerical 
capabilities of the proposed algorithm. They are chosen within 
the general formulation of two-stage optimization problems.
The first two problems are simple ones with analytic solutions and upper-$C^2$ objective that is not
lower-$C^2$ nor prox-regular. We show that the proposed algorithm provides improved convergence
behavior over algorithms that assume lower-type objectives, such as the redistributed bundle method of~\cite{hare2010}. 

\begin{example}\label{exmp:1}
	(Differentiable with unbounded gradient) Consider
\begin{equation}\label{prob:ex1-1st}
 \begin{aligned}
  &\underset{x }{\text{min}} 
	  &    f(x_1) + &\mu [(x_{2}-\frac{1}{2})^2+ x_3^2] +  r(x)\\
   &\text{s.t.}
	  &  -5 \leq &x_1 \leq 5, \ 0 \leq x_2 \leq 50,\ -1 \leq x_3 \leq 10,
 \end{aligned}
\end{equation}
where $x\in\Rbb^3$, $\mu=10^5$ and $f(x_1):\Rbb\to\Rbb$ is a continuously differentiable function.
	The objective $r:\Rbb^3\to\Rbb$ is the solution to the second-stage problem
\begin{equation}\label{prob:ex1-2nd}
 \begin{aligned}
	 &\underset{y\in\Rbb^3}{\text{min}}  &&  \norm{x-y}^2 \\
   &\text{s.t.}
	  &&  y_2 \leq y_3^2, \  -5 \leq y_1 \leq 5,\\
	  &&& -5 \leq y_2\leq 5, \ 0 \leq y_3 \leq 10.\\
 \end{aligned}
\end{equation}
\end{example}
The objective term $r(\cdot)$ is a squared distance function on a closed set and, thus, is upper-$C^2$~\cite{rockafellar1998}. 
In addition, it is lower-$C^1$, but not lower-$C^2$ at $\tilde{x}=[x_1,\frac{1}{2},0]$, $\forall x_1\in[-5,5]$. Thus, $r$ is continuously differentiable but its gradient is not bounded  
at $\tilde{x}$, as shown in the feasible region plot on the left of Figure~\ref{fig:ex_fs}.
In this case, the proposed algorithm offers global convergence support compared to algorithms
that require a lower-$C^2$ objective. We remark that $r(\cdot)$ is  
continuously differentiable at the other points and other algorithms with carefully 
chosen parameters can also succeed in solving Example 1.
\begin{figure}
  \centering
  \includegraphics[width=0.95\textwidth]{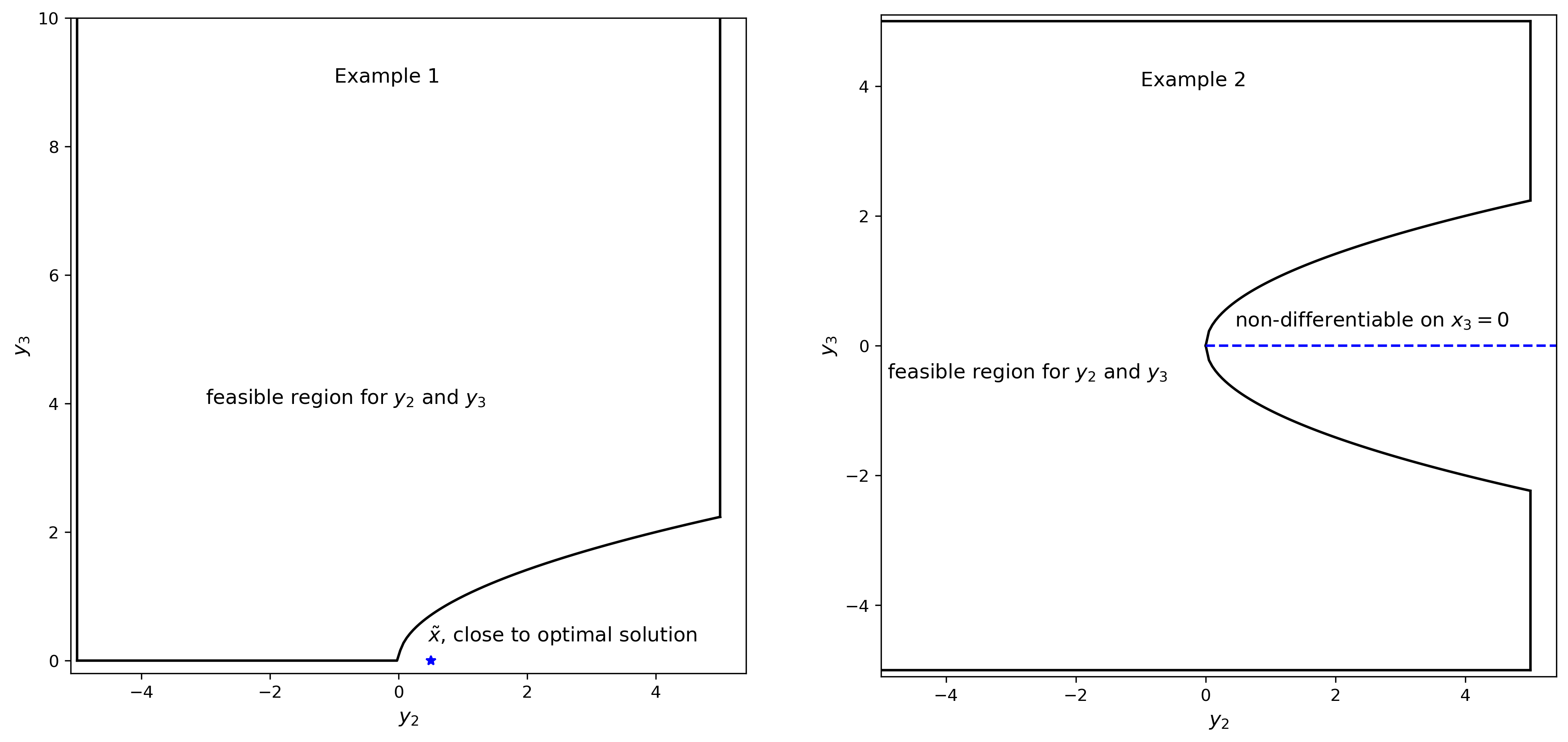}
	\caption{Feasible region of $y_2,y_3$ plane of example 1 (left) and example 2 (right)}
\label{fig:ex_fs}
\end{figure}

The true solution is obtained by treating the two-stage problem as one problem with variables in $\Rbb^6$ and being solved with Ipopt~\cite{ipopt}. In Algorithm~\ref{alg:simp-bundle}  $\eta_l^+$,$\eta_l^-$,$\eta_{\gamma}^+$,$\eta_{\gamma}^-$ are set to $1$, $\alpha_0=1.0,\epsilon=10^{-8}$, $\eta_{\beta}=0.5$, $\eta_{\alpha}=1.25$, and $\gamma=1$. The redistributed bundle method in~\cite{hare2010} is implemented as a comparison with $\Gamma=2,\mu_0=1,\eta_0=1$ and $\text{TOL}_{\text{stop}}=10^{-8}$.
The initial point is set to $x_0=[1,50,5]^T$. 
The simplified bundle method exits in 4 iterations.
While both algorithms quickly moved close to the solution, due to the lack of lower-$C^2$ property at $x=[x_1,\frac{1}{2},0]$ ($\forall x_1\in[-5,5]$), the convexification parameter $\eta_n$ registers a large value for the redistributed bundle method. 
The relatively small error tolerance requires more iterations  from the redistributed bundle method. 
Figure~\ref{fig:ex1} shows the result of error measure against the number of iterations for both redistributed (left) and simplified (right) bundle method. The quadratic coefficient, which decreases in the simplified bundle method as explained in Remark~\ref{rmrk:realalpha}, is also plotted for both algorithms.
\begin{figure}
  \centering
  \includegraphics[width=0.95\textwidth]{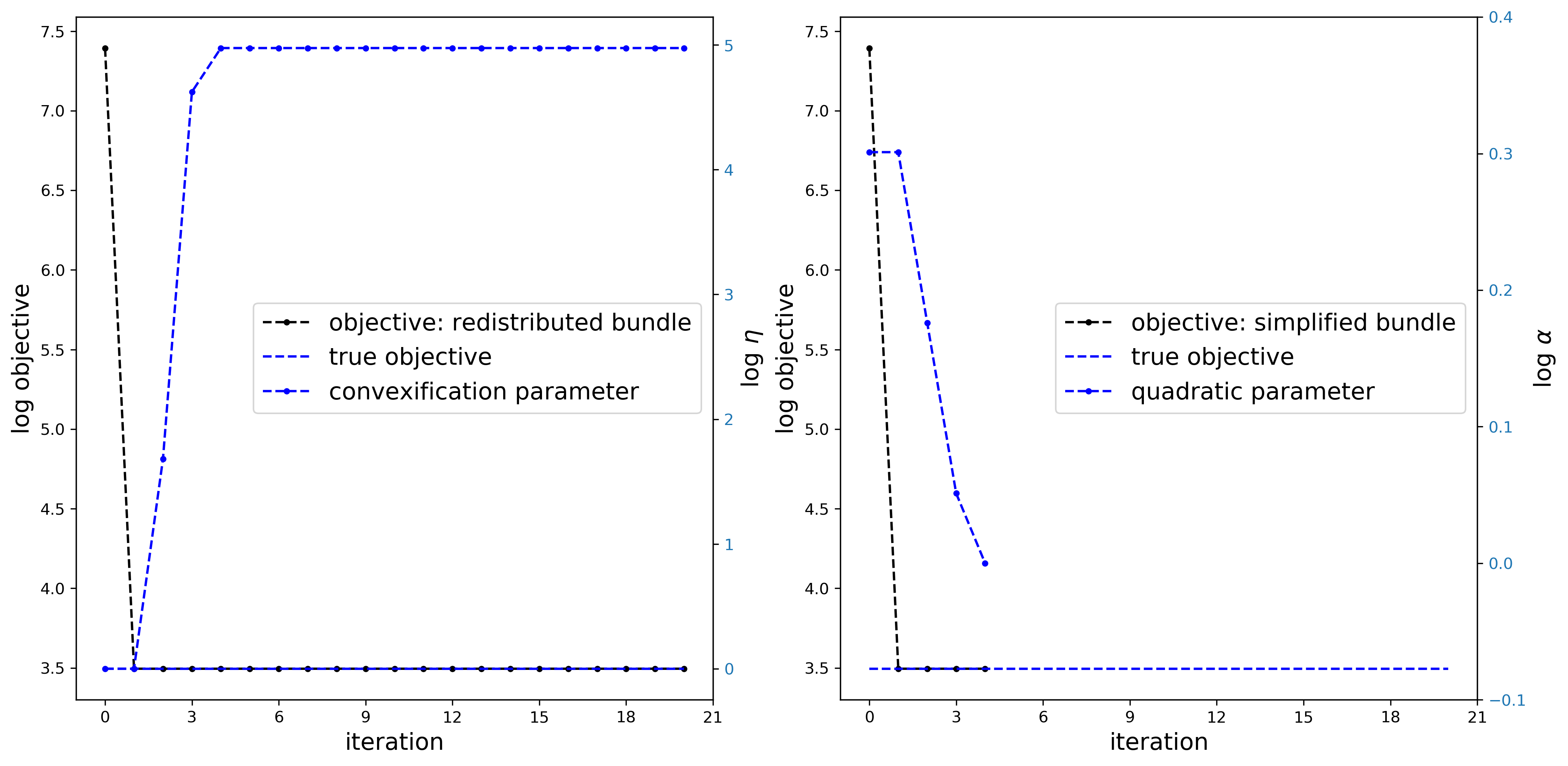}
 \caption{Convergence and quadratic coefficient plots for example 1}
\label{fig:ex1}
\end{figure}

\begin{example}\label{exmp:2}
  (Non-differentiable) Example 2 has the following form:
\begin{equation}\label{prob:ex2-1st}
 \begin{aligned}
  &\underset{x }{\text{min}} 
	  &    f(x_1) + &\mu [(x_{2}-\frac{1}{2})^2+ x_3^2] +  r(x)\\
   &\text{s.t.}
	  &  -5 \leq &x_1 \leq 5, \ \ 0 \leq x_2 \leq 50, \ \  -5 \leq x_3 \leq 5.\\
 \end{aligned}
\end{equation}
Again, $\mu=10^5$ and $f(x_1):\Rbb\to\Rbb$ is a continuously differentiable function.
	The function $r:\Rbb^3\to\Rbb$ is the solution to the second-stage problem
\begin{equation}\label{prob:ex2-2nd}
 \begin{aligned}
  &\underset{y \in \Rbb^3}{\text{min}} 
	  & & \norm{x-y}^2 \\
   &\text{s.t.}
	  &&  y_2 \leq y_3^2, \ -5 \leq y_1,y_2,y_3 \leq 5.\\
 \end{aligned}
\end{equation}
\end{example}
Example 2 is designed to vary slightly from Example 1 to illustrate the large group of problems 
the proposed algorithm can tackle. 
With a slight change in the constraint to allow $y_3<0$, the solution function $r$ is no longer differentiable on $x_3=0$ as 
multiple solutions $y$ exist. This is illustrated on the right plot in Figure~\ref{fig:ex_fs}.
However, $r$ remains upper-$C^2$ and the convergence analysis for the proposed algorithm applies.
Figure~\ref{fig:ex2} shows the objective and quadratic coefficient for both redistributed (left) and simplified (right) bundle method from the same starting point and with same parameters as in Example 1. Similar conclusions as in example 1 can be drawn.
\begin{figure}
  \centering
  \includegraphics[width=0.95\textwidth]{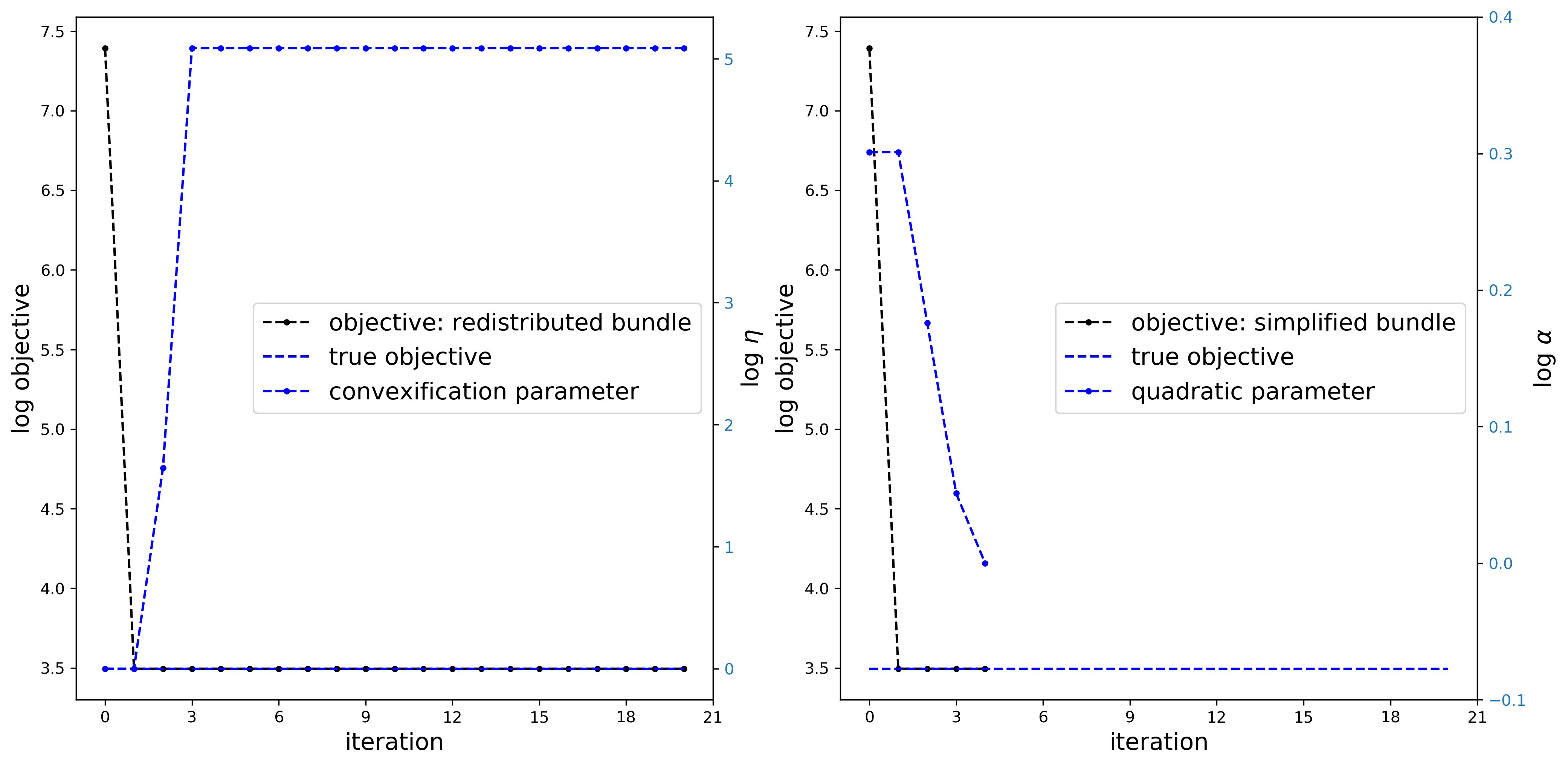}
 \caption{Convergence and quadratic coefficient plots for example 2}
\label{fig:ex2}
\end{figure}

\begin{example}\label{exmp:3}
	(smoothed SCACOPF) Example 3 is a SCACOPF problem with affine active power constraint for 
	contingency (second-stage) problems. The network data used in this example is from the ARPA-E Grid Optimization competition~\cite{petra_21_gollnlp}. 
	The complete mathematical formulation is complex but the master (first-stage) problem fits in the form of~\eqref{eqn:opt0}, 
	where $r$ is the recourse function of the contingency problems. 
	Details of the 
	problem setup can be seen in~\cite{petra_21_gollnlp}. The number of contingency problems that are solved to evaluate $r$ is 100.
\end{example}
The coupling constraint between master and contingency variables can be viewed as linear in the former ($x$) but it is nonsmooth. This means recourse $r(\cdot)$ might not be upper-$C^2$.
However, using a quadratic penalty of the coupling constraints in the contingency problems, $r$ in~\eqref{eqn:opt0} becomes upper-$C^2$ and the problem is referred to as the smoothed SCACOPF, in contrast to the original non-smoothed one. 
The proposed algorithm is applied to the smoothed SCACOPF, where the quadratic penalty parameter $\mu$ is set to $10^9$.
While this means the convergence analysis applies, we only solve an approximated problem.
To verify the accuracy of the solution, the true solution is obtained by solving the extensive form of the SCACOPF with Ipopt. 
It is plotted with the optimization results in Figure~\ref{fig:ex3}. We also plot the non-smoothed objective evaluated at the optimal solution $x$ gained from the smoothed problem at each iteration. 
The rejected steps are marked as well.
Within 200 iterations, the non-smoothed objective reach within $0.010\%$ error of the true solution, which
is acceptable and useful in practice. To speed up convergence of this first-order method, 
the quadratic coefficient $\alpha_k$ is reduced whenever possible. 
For large-scale problems with 
$10^4$ coupled optimization variables and $10^5$ contingencies, the extensive form of the SCACOPF
would be impractical, while the simplified bundle algorithm has proved to be scalable at large computational scale~\cite{wang2021}.

\begin{figure}
  \centering
  \includegraphics[width=0.95\textwidth]{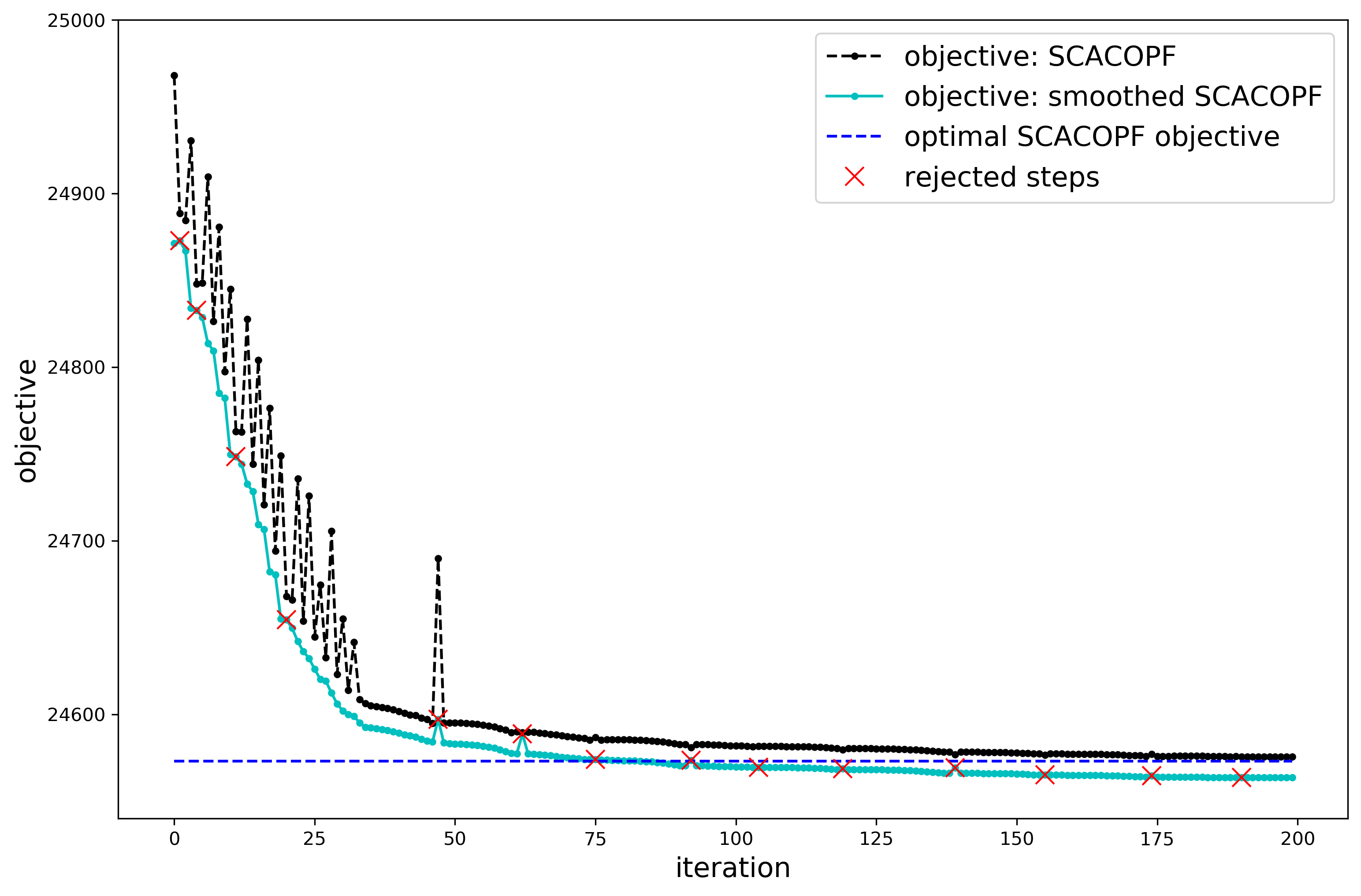}
 \caption{Convergence plots for example 3}
\label{fig:ex3}
\end{figure}

\section{\normalsize Conclusions}\label{sec:con}
In this paper, we have proposed and analyzed a two-algorithm methodology for nonsmooth, nonconvex optimization problems
with upper-$C^2$ objectives, which exist in many applications, particularly two-stage optimization problems.
While both bundle methods and DC algorithms can potentially solve problems with upper-$C^2$ objectives, attention is usually focused on the weakly convex objectives. 
The proposed algorithm fills the gap of a robust and efficient algorithm for problems that lack weak convexity.
To that end, a convex quadratic programming (QP) subproblem is solved at each iteration and a trust-region update 
rule is used to adjust the model parameters, which can stay bounded under reasonable assumptions. The line search 
on the constraints is carefully designed to ensure overall progress.
The penalty subproblem in the consistency restoration algorithm remains a QP problem and is effective in addressing the 
potential inconsistency of the linearized constraints.
Global convergence analysis of the proposed pair of algorithms is provided. 

The algorithms are presented from a bundle method point of view, 
but they can also can be viewed as an extension of SQP or a trust-region updated proximal variation of DCA with line search.
Further relaxation of the assumptions is possible, particularly for LICQ, which will be studied in the future. Another topic of interest is to extend the algorithms to more two-stage optimization problems using a combination of different bundle methods.
Finally, we note that the algorithm has been implemented on parallel computing platforms for our target application,  power-grid optimization problems, and has shown significant potential for computational scalability~\cite{wang2021}.

\appendix

\section*{Acknowledgments}
Prepared by LLNL under Contract DE-AC52-07NA27344.
This document was prepared as an account of work sponsored by an agency of the United States government. Neither the United States government nor Lawrence Livermore National Security, LLC, nor any of their employees makes any warranty, expressed or implied, or assumes any legal liability or responsibility for the accuracy, completeness, or usefulness of any information, apparatus, product, or process disclosed, or represents that its use would not infringe privately owned rights. Reference herein to any specific commercial product, process, or service by trade name, trademark, manufacturer, or otherwise does not necessarily constitute or imply its endorsement, recommendation, or favoring by the United States government or Lawrence Livermore National Security, LLC. The views and opinions of authors expressed herein do not necessarily state or reflect those of the United States government or Lawrence Livermore National Security, LLC, and shall not be used for advertising or product endorsement purposes.
\bibliographystyle{siamplain}
\bibliography{bibliography}

\begin{thebibliography}{10}

\bibitem{an2018}
{\sc L.~T.~H. An and P.~D. Tao}, {\em {DC programming and DCA: thirty years of
  developments}}, Mathematical Programming, 169 (2018), pp.~5--68.

\bibitem{apkarian2008}
{\sc P.~Apkarian, D.~Noll, and O.~Prot}, {\em A trust region spectral bundle
  method for nonconvex eigenvalue optimization}, SIAM J. Optim., 19 (2008),
  p.~281–306.

\bibitem{Birge97Book}
{\sc J.~R. Birge and F.~Louveaux}, {\em Introduction to Stochastic
  Programming}, Springer-Verlag, New York,, 1997.

\bibitem{bonnans_book}
{\sc J.~F. Bonnans and A.~Shapiro}, {\em Perturbation Analysis of Optimization
  Problems}, Springer, New York, 1~ed., 2000.

\bibitem{byrd2005}
{\sc R.~H. Byrd, N.~I. Gould, J.~Nocedal, and R.~A. Waltz}, {\em On the
  convergence of successive linear-quadratic programming algorithms}, SIAM J.
  Optim., 16 (2005), p.~471–89.

\bibitem{ChiangPetraZavala_14_PIPSNLP}
{\sc N.~{Chiang}, C.~G. {Petra}, and V.~M. {Zavala}}, {\em Structured nonconvex
  optimization of large-scale energy systems using pips-nlp}, in 2014 Power
  Systems Computation Conference, 2014, pp.~1--7.

\bibitem{clarke1983}
{\sc F.~Clarke}, {\em {Optimization and Nonsmooth Analysis}}, {John Wiley \&
  Sons New York}, 1983.

\bibitem{cui2021}
{\sc Y.~Cui and J.~S. Pang}, {\em Modern Nonconvex Nondifferentiable
  Optimization}, Society for Industrial and Applied Mathematics, 2021.

\bibitem{curtis2017}
{\sc F.~E. Curtis, T.~Mitchell, and M.~L. Overton}, {\em A {BFGS-SQP} method
  for nonsmooth, nonconvex, constrained optimization and its evaluation using
  relative minimization profiles}, Optimization Methods Software, 32 (2017),
  p.~148–181.

\bibitem{curtis2012}
{\sc F.~E. Curtis and M.~Overton}, {\em A sequential quadratic programming
  algorithm for nonconvex, nonsmooth constrained optimization}, SIAM J. Optim.,
  22 (2012), pp.~474--500.

\bibitem{daniilidis2004}
{\sc A.~Daniilidis and P.~Georgiev}, {\em Approximate convexity and
  submonotonicity}, Journal of Mathematical Analysis and Applications, 291
  (2004), p.~292–301.

\bibitem{dao2015}
{\sc M.~Dao}, {\em Bundle method for nonconvex nonsmooth constrained
  optimization}, Journal of Convex Analysis, 22 (2015), pp.~1061--1090.

\bibitem{dao2016}
{\sc M.~Dao, J.~Gwinner, D.~Noll, and N.~Ovcharova}, {\em Nonconvex bundle
  method with application to a delamination problem}, Computational
  Optimization and Applications, 65 (2016).

\bibitem{hare2015}
{\sc W.~Hare, C.~Sagastizabal, and M.~Solodov}, {\em A proximal bundle method
  for nonsmooth nonconvex functions with inexact information}, Computational
  Optimization and Applications, 63 (2015).

\bibitem{hare2010}
{\sc W.~Hare and C.~Sagastizábal}, {\em A redistributed proximal bundle method
  for nonconvex optimization}, SIAM J. Optim., 20 (2010), pp.~2442--73.

\bibitem{hong2018}
{\sc M.~Hong}, {\em A distributed, asynchronous, and incremental algorithm for
  nonconvex optimization: An {ADMM} approach}, IEEE Transactions on Control of
  Network Systems, 5 (2018), pp.~935--945.

\bibitem{KallWallace}
{\sc P.~Kall and S.~W. Wallace}, {\em Stochastic Programming}, John Wiley \&
  Sons, Chichester, 2nd~ed., 1994.

\bibitem{kiwiel1985}
{\sc K.~Kiwiel}, {\em A linearization algorithm for nonsmooth minimization},
  Mathematics of Operations Research, 10 (1985), pp.~185--94.

\bibitem{Kiwiel1996}
{\sc K.~Kiwiel}, {\em Restricted step and levenberg-marquardt techniques in
  proximal bundle methods for nonconvex nondifferentiable optimization}, SIAM
  J. Optim., 6 (1996), pp.~227--249.

\bibitem{lemarechal1978}
{\sc C.~Lemaréchal}, {\em Bundle methods in nonsmooth optimization}, in
  Nonsmooth optimization (Proc. IIASA Workshop, Laxenburg, 1977), vol.~3,
  Pergamon, Oxford, 1978, pp.~79--102.

\bibitem{lemarechal2001}
{\sc C.~Lemaréchal}, {\em Lagrangian relaxation}, in Computational
  Combinatorial Optimization: Optimal or Provably Near-Optimal Solutions,
  M.~Jünger and D.~Naddef, eds., Springer Berlin Heidelberg, Berlin,
  Heidelberg, 2001, p.~112–156.

\bibitem{lemarechal1996}
{\sc C.~Lemaréchal and C.~Sagastizabal}, {\em Variable metric bundle methods:
  From conceptual to implementable forms}, Math. Program., 76 (1996),
  pp.~393--410.

\bibitem{liu2020}
{\sc J.~Liu, Y.~Cui, J.~S. Pang, and S.~Sen}, {\em Two-stage stochastic
  programming with linearly bi-parameterized quadratic recourse}, SIAM J.
  Optim., 30 (2020), p.~2530–2558.

\bibitem{mifflin1982}
{\sc R.~Mifflin}, {\em A modification and an extension of {Lemarechal’s}
  algorithm for nonsmooth minimization}, in Nondifferential and Variational
  Techniques in Optimization, vol.~17 of Mathematical Programming Studies,
  Springer, Berlin, Heidelberg, 1982, pp.~77--90.

\bibitem{mordukhovich2004upp}
{\sc B.~Mordukhovich}, {\em Necessary conditions in nonsmooth minimization via
  lower and upper subgradients}, Set-Valued Analysis, 12 (2004), pp.~163--193.

\bibitem{makela1992}
{\sc M.~M. Mäkelä and P.~Neittaanmäki}, {\em Nonsmooth Optimization}, WORLD
  SCIENTIFIC, 1992.

\bibitem{Nocedal_book}
{\sc J.~Nocedal and S.~J. Wright}, {\em Numerical Optimization}, Springer, New
  York, 2nd~ed., 2006.

\bibitem{noll2009}
{\sc D.~Noll}, {\em Cutting plane oracles to minimize non-smooth non-convex
  functions}, Set-Valued and Variational Analysis, 18 (2009), pp.~531--568.

\bibitem{noll2013}
{\sc D.~Noll}, {\em Bundle method for non-convex minimization with inexact
  subgradients and function values}, Springer Proceedings in Mathematics and
  Statistics, 50 (2013).

\bibitem{petra_21_gollnlp}
{\sc C.~G. Petra and I.~Aravena}, {\em Solving realistic security-constrained
  optimal power flow problems}, Operations Research, submitted (2021).

\bibitem{petra_14_realtime}
{\sc C.~G. Petra, O.~Schenk, and M.~Anitescu}, {\em Real-time stochastic
  optimization of complex energy systems on high performance computers},
  Computing in Science and Engineering, 99 (2014), pp.~1--9.

\bibitem{petra_14_augIncomplete}
{\sc C.~G. Petra, O.~Schenk, M.~Lubin, and K.~G{\"a}rtner}, {\em An augmented
  incomplete factorization approach for computing the {S}chur complement in
  stochastic optimization}, SIAM Journal on Scientific Computing, 36 (2014),
  pp.~C139--C162.

\bibitem{Qiu2005}
{\sc W.~Qiu, A.~J. {Flueck}, and F.~Tu}, {\em A parallel algorithm for security
  constrained optimal power flow with an interior point method}, in IEEE Power
  Engineering Society General Meeting, 2005, 2005, pp.~447--453 Vol. 1.

\bibitem{rockafellar1998}
{\sc R.~T. Rockafellar and R.~J.-B. Wets}, {\em Variational Analysis},
  Springer-Verlag, Berlin Heidelberg, 1998.

\bibitem{schramm1992}
{\sc H.~Schramm and J.~Zowe}, {\em A version of the bundle idea for minimizing
  a nonsmooth function: Conceptual idea, convergence analysis, numerical
  results}, SIAM J. Optim., 2 (1992), pp.~121--152.

\bibitem{Shapiro_book}
{\sc A.~Shapiro, D.~Dentcheva, and A.~Ruszczyński}, {\em Lectures on
  Stochastic Programming: Modeling and Theory, Second Edition}, Society for
  Industrial and Applied Mathematics, Philadelphia, PA, 2014.

\bibitem{shor1985}
{\sc N.~Z. Shor}, {\em Minimization methods for non-differentiable functions},
  Springer-Verlag, Berlin Heidelberg, 3~ed., 1985.

\bibitem{Spingarn1981SubmonotoneSO}
{\sc J.~Spingarn}, {\em Submonotone subdifferentials of lipschitz functions},
  Transactions of the American Mathematical Society, 264 (1981), pp.~77--89.

\bibitem{wang2021}
{\sc J.~Wang, N.~Y. Chiang, and C.~G. Petra}, {\em An asynchronous
  distributed-memory optimization solver for two-stage stochastic programming
  problems}, in 20th International Symposium on Parallel and Distributed
  Computing (ISPDC), IEEE, Jul 2021, pp.~33--40.

\bibitem{wang2022}
{\sc J.~Wang and C.~G. Petra}, {\em A simplified nonsmooth nonconvex bundle
  method with applications to security-constrained acopf problems}, 2022,
  \url{https://arxiv.org/abs/arXiv:2203.17215}.

\bibitem{wang2019}
{\sc Y.~Wang, W.~Yin, and J.~Zeng}, {\em Global convergence of admm in
  nonconvex nonsmooth optimization}, J. Sci. Comput., 78 (2019), p.~29–63.

\bibitem{ipopt}
{\sc A.~Wächter and L.~Biegler}, {\em On the implementation of an
  interior-point filter line-search algorithm for large-scale nonlinear
  programming}, Math. Program., 106 (2006), pp.~25--27.

\bibitem{xu2015}
{\sc M.~Xu, J.~Ye, and L.~Zhang}, {\em Smoothing {SQP} methods for solving
  degenerate nonsmooth constrained optimization problems with applications to
  bilevel programs}, SIAM J. Optim., 25 (2015), pp.~1388--1410.

\bibitem{yang2014}
{\sc Y.~Yang, L.~Pang, X.~Ma, and J.~Shen}, {\em Constrained nonconvex
  nonsmooth optimization via proximal bundle method}, J. Optim. Theory Appl.,
  163 (2014), p.~900–925.

\bibitem{zowe1989}
{\sc J.~Zowe}, {\em {The BT-Algorithm for Minimizing a Nonsmooth Functional
  Subject to Linear Constraints}}, in Nonsmooth Optimization and Related
  Topics, vol.~43 of Ettore Majorana International Science Series, Springer,
  Boston, MA, 1989.

\end{thebibliography}
\end{document}